\providecommand{\U}[1]{\protect\rule{.1in}{.1in}}
\numberwithin{equation}{section}
\newtheorem{theorem}{Theorem}[section]
\newtheorem{lemma}[theorem]{Lemma}
\newtheorem{thm}[theorem]{Theorem}
\newtheorem{prop}[theorem]{Proposition}
\newtheorem{cor}[theorem]{Corollary}
\newtheorem{defn}[theorem]{Definition}
\newtheorem{rem}[theorem]{Remark}
\newtheorem{que}[theorem]{Question}
\newcommand{\p}{\partial}
\newcommand{\vphi}{\varphi}
\newcommand{\om}{\omega}
\newcommand{\tri}{\triangle}
\newcommand{\eps}{\epsilon}
\newcommand{\thmref}[1]{Theorem~\ref{#1}}
\def\p{\partial}
\let\vphi=\varphi
\newcommand{\chapter}{\part}
\title[Estimates for complex singular Monge-Amp\`ere equations via integral method]{Estimates for singular complex Monge-Amp\`ere equations via integral method}
\author{Yunqing Wu}  
\address{
	The Institute of Geometry and Physics\\
	University of Science and Technology of China, Hefei, Anhui, People's Republic of China}
\email{yqwu19@ustc.edu.cn}
\author{Kai Zheng}  
\address{University of Chinese Academy of Sciences, Beijing 100190, P.R. China}
\email{KaiZheng@amss.ac.cn}
\begin{document}
	\maketitle
	
	\begin{abstract} 
	In this paper, we obtain gradient estimates and Laplacian estimates for the solution to the singular complex Monge-Amp\`ere equation by applying the integral method.
	\end{abstract}

	\tableofcontents

\section{Introduction}
Let $(X,\omega_{X})$ be a compact K\"ahler manifold. Let $\varphi$ solve the following complex Monge-Amp\`ere equation 
\begin{align}
\label{complex MA Kahler version}
(\omega_{X}+\sqrt{-1}\partial \bar{\partial} \varphi)^{n}=e^{P} \omega_{X}^{n}, \quad \sup_{X} \varphi=0,
\end{align}
where $P$ is a given function. For simplicity, we write
\begin{align*}
\omega_{\varphi}=\omega_{X}+\sqrt{-1}\partial \bar{\partial} \varphi. 
\end{align*}

In this paper, we start with the following gradient estimate, by applying the integral method and using a Sobolev inequality with respect to $\omega_{\varphi}$ (instead of the one with respect to the background metric $\omega_{X}$ in \cite{ChenHe2012}) proved in \cite{guedj2024kahler}. 

\begin{theorem}
	\label{thm gradient estimate with general F}
	We assume $e^{\frac{P}{\beta}  } \in W^{1,\beta}(\omega_{X})$ for some $\beta>n$, then there exists a positive constant $C_{G}$ depending on $n,\omega_{X},\beta,\| e^{\frac{P}{\beta}  }\|_{ W^{1,\beta} (\omega_{X})} ,\|\varphi\|_{C^{0}(X)}$ such that
	\begin{align*}
	|\nabla \varphi| \leq C_{G}. 
	\end{align*}
\end{theorem}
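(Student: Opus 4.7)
The plan is to adapt the Chen--He Moser iteration argument of \cite{ChenHe2012} to the possibly singular setting, the essential modification being to integrate against the Monge--Amp\`ere volume $\omega_{\varphi}^{n}=e^{P}\omega_{X}^{n}$ and to use the Sobolev inequality on $(X,\omega_{\varphi})$ from \cite{guedj2024kahler} rather than one with respect to $\omega_{X}$. This exchange is what removes any need for pointwise control of $P$: the $W^{1,\beta}$ hypothesis will enter only through one H\"older estimate on a single gradient term. Concretely, set $u=e^{-\lambda\varphi}|\nabla\varphi|^{2}_{\omega_{X}}$ with $\lambda>0$ large in terms of $\|\varphi\|_{C^{0}}$ and a lower bound on the bisectional curvature of $\omega_{X}$. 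Using the Bochner--Kodaira formula for $\omega_{\varphi}$ together with the Ricci identity $\Ricci(\omega_{\varphi})=\Ricci(\omega_{X})-\sqrt{-1}\partial\bar\partial P$ forced by the Monge--Amp\`ere equation, one should derive a differential inequality of the schematic form
\begin{equation*}
\Delta_{\omega_{\varphi}}u \;\geq\; u \;-\; C \;-\; C'\,|\nabla P|_{\omega_{X}}\,u^{1/2}e^{-\lambda\varphi/2},
\end{equation*}
the positive $u$ on the right coming from the $\lambda$-twist after absorbing the Hessian-squared term.

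For $p\geq 1$, I would then multiply by $u^{p-1}$ and integrate against $\omega_{\varphi}^{n}$. The $\Delta_{\omega_{\varphi}}u$ term, after integration by parts, controls $\tfrac{4(p-1)}{p^{2}}\int_{X}|\nabla u^{p/2}|^{2}_{\omega_{\varphi}}\,\omega_{\varphi}^{n}$, and the only delicate term on the right is
\begin{equation*}
\int_{X} |\nabla P|_{\omega_{X}}\,u^{p-1/2}\,e^{-\lambda\varphi/2}\,e^{P}\omega_{X}^{n} \;=\; \int_{X}\bigl(e^{P/\beta}|\nabla P|_{\omega_{X}}\bigr)\bigl(u^{p-1/2}e^{-\lambda\varphi/2}e^{P(1-1/\beta)}\bigr)\,\omega_{X}^{n}.
\end{equation*}
H\"older with exponents $\beta$ and $\beta/(\beta-1)$, combined with the identity $\nabla(e^{P/\beta})=\tfrac{1}{\beta}e^{P/\beta}\nabla P$, controls the first factor by $\beta\|e^{P/\beta}\|_{W^{1,\beta}(\omega_{X})}$; rewriting $e^{P}\omega_{X}^{n}=\omega_{\varphi}^{n}$ turns the remainder into an $L^{q}(\omega_{\varphi})$-type quantity with exponent $q=(p-1/2)\beta/(\beta-1)$. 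The hypothesis $\beta>n$ then yields $q<pn/(n-1)$, which is the strict sub-critical gap that drives the iteration.

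Finally, applying the Sobolev inequality of \cite{guedj2024kahler} on $(X,\omega_{\varphi})$ to $f=u^{p/2}$ together with the previous step produces a recursion of the form $\|u\|_{L^{\sigma p}(\omega_{\varphi})}\leq (Cp)^{a/p}\|u\|_{L^{p}(\omega_{\varphi})}$ with $\sigma=n/(n-1)>1$. Standard Moser iteration from a base $L^{p_{0}}(\omega_{\varphi})$ estimate, obtained by testing the inequality of the first step with $p=1$ and using $\|\varphi\|_{C^{0}}<\infty$, then yields $\sup_{X}u\leq C_{G}$, and hence $|\nabla\varphi|\leq C_{G}$. The main obstacle is the exponent book-keeping in the H\"older step: the $|\nabla P|$ integral must be absorbable by a strictly sub-critical $L^{q}(\omega_{\varphi})$-norm of $u$, which is exactly where $\beta>n$ is used, mirroring the analogous role in \cite{ChenHe2012}. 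A secondary subtlety is that the Sobolev constant of \cite{guedj2024kahler} depends on an integral bound on $e^{P}$, which one needs to verify follows from $\|e^{P/\beta}\|_{W^{1,\beta}(\omega_{X})}$ (via Sobolev embedding) in order to obtain the claimed dependence of $C_{G}$.
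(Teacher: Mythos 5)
Your proposal follows the paper's route: twist $|\nabla\varphi|^{2}$ by $e^{-\sigma\varphi}$ with $\sigma$ tied to a lower bound of the bisectional curvature of $\omega_{X}$, derive a differential inequality via a Bochner-type computation, integrate against $\omega_{\varphi}^{n}$, invoke the $\omega_{\varphi}$-Sobolev inequality of \cite{guedj2024kahler} (whose constant, as you observe, is controlled through the $L^{q}(\omega_{X})$-integrability of $e^{P}$ furnished by Sobolev embedding from $e^{P/\beta}\in W^{1,\beta}$), H\"older the $|\nabla P|$ term against the $W^{1,\beta}$ norm, and run Moser iteration; and the observation that $\beta>n$ makes $\beta/(\beta-1)<n/(n-1)$ is precisely the subcritical gap the paper exploits. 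Conceptually this is the same proof.

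One flaw in your schematic differential inequality: $\Delta_{\omega_{\varphi}}u\geq u - C - C'|\nabla P|_{\omega_{X}}u^{1/2}e^{-\lambda\varphi/2}$ with a positive $+u$ on the right is not obtainable in general. The $\lambda$-twist contributes $+\lambda u\,\tr_{\omega_{\varphi}}\omega_{X}$, and lower-bounding this by $cu$ would require $\tr_{\omega_{\varphi}}\omega_{X}\geq n\,e^{-P/n}$ to be bounded below, i.e.\ an upper bound on $P$; for $n<\beta\leq 2n$ the hypothesis $e^{P/\beta}\in W^{1,\beta}$ does not give $C^{0}$-control of $e^{P/\beta}$, so no such bound is available. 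Proposition~\ref{prop gradient differential inequality} correctly has $-C_{1.2}u$ instead, simply discarding the positive $\tr_{\omega_{\varphi}}\omega_{X}$ terms after absorbing the curvature. The weaker form still closes the iteration, but then the base case is not obtained merely by ``testing at $p=1$'': the paper introduces an intermediate exponent $\alpha\in(\beta/(\beta-1),\gamma)$, absorbs $\int u^{2p+1}$ and $\int u^{2p}$ against $\big(\int u^{(2p+1)\alpha}\big)^{1/\alpha}$ via H\"older and Young, and launches the iteration from a normalization threshold using an interpolation inequality. These are bookkeeping matters rather than structural ones; the skeleton of your argument matches the paper's.
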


\bigskip

A lot of work has been done on the gradient estimate of $\varphi$. By using maximum principle, Blocki \cite{Blocki2009}, Guan \cite{Guan2010} proves that
the $L^{\infty}$ estimate of $|\nabla \varphi|$ is bounded by $C^{0}$-estimate of $\varphi$, the upper bounds for $P, e^{ \frac{P}{n} }, |\nabla e^{ \frac{P}{n} }|$ and a lower bound for the bisectional curvature of $(X,\omega_{X})$, which generalises the result in \cite{blocki2003regularity} under additional assumptions: $(X,\omega_{X})$ has nonnegative bisectional curvature. By applying the integral method and Moser's iteration, Chen-He \cite{ChenHe2012} proves that  $L^{\infty}$ estimate of $|\nabla \varphi|$ can be controlled by  $C^{0}$-estimate of $\varphi$, $\|P\|_{ W^{1,p_{0}}(\omega_{X}) }$ for some $p_{0}>2n$ and the background metric $\omega_{X}$. In \cite{ChenCheng2019}, without assuming the bound on the derivatives of $e^{P}$, Chen-Cheng bounds $|\nabla \varphi|$ by the background metric $\omega_{X}$, the absolute of $P$, and an upper bound for $\int_{0}^{1} \frac{w^{2}(r)}{r} dr$, where $w$ is the modulus of continuity of $P$. By applying Alexandroff-Bakelman-Pucci maximum principle directly, Guo-Phong-Tong \cite{GuoPhongTong2024}  shows that $|\nabla \varphi|$ is bounded by an upper bound of $P$, $\|e^{ \frac{P}{n} } \|_{L^{2n}(\omega_{X})}$ and a lower bound for the bisectional curvature of $(X,\omega_{X})$. Using Green's representation, 
	 Theorem \ref{thm gradient estimate with general F} has been proved by Guo-Phong-Sturm \cite[Corollary 3]{guo2024green} under extra assumptions on $P$ besides $e^{\frac{P}{\beta}  } \in W^{1,\beta}(\omega_{X}), \beta>n$. 
Recently, Liu \cite{liu2024complex} refines the Alexandroff-Bakelman-Pucci maximum principle used in the complex setting and deduces that $|\nabla \varphi|$ can be bounded by $\|e^{P} \|_{ L^{q}(\omega_{X}) }$ for some $q>n$, $\int_{X} |\nabla P|^{n} \big( \log (|\nabla P|+1    ) \big)^{p} e^{P} \omega_{X}^{n}$ for some $p>1$ and a lower bound for the bisectional curvature of $(X,\omega_{X})$.

\bigskip

We then apply Theorem \ref{thm gradient estimate with general F} to a more general equation. 
Let $\theta$ be  a big and semipositive closed $(1,1)$-form.
We consider a priori estimates for the complex Monge-Amp\`ere equation 
\begin{align}
\label{singular equation intro}
\theta_{\vphi}^{n} :=(\theta+\sqrt{-1}  \p\bar\p\vphi)^n
=  e^{P}  \om_X^n,\quad P:=h-f+F(x,\vphi),
\end{align}
where $h$ is a given function, $f$ is the singularities we will prescribe, and  $F(x,\vphi):=-\lambda\vphi$ where $\lambda\in \mathbb R$ is a fixed constant. 

We assume that there exists an effective divisor $E$ and a small positive constant $a_0$, such that $[\theta]-a_0[E]$ is K\"ahler, which means 
$\omega_{K}:=\theta+\sqrt{-1} \partial \bar{\partial} \phi_{E}$
is a K\"ahler metric, where $\phi_{E}=a_{0} \log |s_{E}|_{ h_{E} }^{2}$, 
$h_{E}$ is a smooth Hermitian metric on the associated line bundle of $E$ and $s_{E}$ is the holomorphic section of $E$ with $|s_{E}|_{h_{E}} \leq 1$. This assumption naturally holds by the Kodaira lemma when $X$ is projective, but are unaffirmative for general case. 

The singularity term $e^{-f}$ we consider here is of divisorial type along $D$, which is a  set consisting with finite number, disjoint, smooth components in $X$ with the complex co-dimension of $n_s$, and written as 
\begin{align*}
e^{-f}=|s|_{D}^{2 \kappa}, \quad \kappa \geq 0,
\end{align*}
where $|s|_{D}^{2}$ is a smooth function defined on $X$ vanishing on $D$ and satisfying (\ref{estimate of S eps})  and (\ref{integral condition of f}). 
Intuitively, but not strictly speaking, 
one may imagine that $|s|_{D}^{2}$ behaves locally as $\sum_{1 \leq i \leq n_{s}} |z^{i}|^{2}$ where $(z^{1},\cdots,z^{n})$ is a coordinate system.  

With the above notions, we consider the (t,$\epsilon$)-approximating equation of (\ref{singular equation intro}), where $t \in (0,1), \epsilon \in (0,\frac{1}{4})$ as follows: 
	\begin{align}
	\label{approximating equation intro}
	\omega_{\vphi_{t,\epsilon}}^{n} :=(  \omega_{t}+ \sqrt{-1} \partial \bar{\partial} \varphi_{    t,\epsilon}      )^{n}=  e^{ h-f_{t,\epsilon} -\lambda\vphi_{t,\epsilon }   } \omega_X^{n}, \quad e^{-f_{t,\eps}}=S_{\epsilon}^{ \kappa}e^{c_{t,\epsilon}}, 
	\end{align}
	where $\omega_{t}=\theta+t \omega_{X}, S_{\epsilon}:=|s|_{D}^{2}+\epsilon$ and $c_{t,\epsilon}$ is chosen for normalisation:
	\begin{align*}
	[\omega_{t}]^{n}=\int_{X}  e^{ h -f_{t,\eps}-\lambda\vphi_{  t,\epsilon}    } \omega_X^{n}=e^{  c_{t,\epsilon}  } \int_{X} S_{\epsilon}^{\kappa} e^{ h-\lambda \varphi_{t,\epsilon}  } \omega_{X}^{n} .
	\end{align*}

In this paper, we always assume the existence of smooth solutions to \eqref{approximating equation intro}. It is guaranteed, thanks to Yau \cite{yau1978ricci}, when $\lambda\leq 0$. When $\lambda>0$, the solvability of \eqref{approximating equation intro} depends on properness conditions. We further assume the uniform $C^{0}$ estimate of $\varphi_{t,\epsilon}$ holds, which means $\|\varphi_{t,\epsilon} \|_{L^{\infty}(X)}$ has an finite upper bound independent of $t$ and $\epsilon$. 

Using Tsuji's trick (\cite{tsuji1988existence}, see also \cite{eyssidieux2009singular}), we rewrite (\ref{approximating equation intro}) on $E \setminus X$ as 
\begin{align}
\label{approximating after Tsuji intro}
( \tilde\omega_{t}  +\sqrt{-1}   \partial \bar{\partial}  \tilde{ \varphi}_{t,\epsilon}            )^{n}=e^{  h-\tilde f_{t,\eps}-\lambda \varphi_{t,\epsilon}  } \tilde\omega_{t}^{n}. 
\end{align}
where 
\begin{align}\label{Notations}
\tilde{ \varphi}_{t,\epsilon}  &   :=\varphi_{t,\epsilon}  -\phi_{E}  ,  \\
\tilde\omega_{t}&:=\omega_{t}+ \sqrt{-1} \partial \bar{\partial} \phi_{E}=t \omega_X+\theta+i \partial \bar{\partial} \phi_{E}=t \omega_X+\omega_{K},\\
-\tilde f_{t,\eps}&:=- f_{t,\eps} +\log \frac{ \omega_X^{n}}{\tilde\omega_{t}^{n} } =c_{t,\epsilon}+\kappa \log S_{\epsilon}+ \log \frac{ \omega_X^{n}}{\tilde\omega_{t}^{n} }. 
\end{align}
On $X \setminus E$, we can  derive the differential inequality of the gradient or Laplacian of $ \tilde{ \varphi}_{t,\epsilon} $ by standard calculation tricks. By fine properties of the test function we choose, the differential inequality holds on $X$, and 
we can still derive the uniform (independent of $t$ and $\epsilon$) gradient and Laplacian estimates of $ \tilde{ \varphi}_{t,\epsilon} $ by integral method. 

Parallel to Theorem \ref{thm gradient estimate with general F}, the following gradient estimate holds.
\begin{thm}
	\label{thm gradient estimate singular intro}
	We assume that one of the following conditions holds:\\
	(1). $e^{ \frac{h}{b} } \in W^{1,b}(\omega_{X}) $ for some $b \in (n,2n)$, $\kappa$ satisfies
	\begin{align*}
	\kappa >\frac{n}{2} - \frac{b}{2n}   n_{s};
	\end{align*}
	(2). $e^{ \frac{h}{b} } \in W^{1,b}(\omega_{X}) $ for some $b \geq 2n$, $\kappa$ satisfies
	\begin{align*}
	\kappa >\frac{n}{2} -    n_{s}.
	\end{align*}
	Then there exists a positive constant $\mathscr{C}_{\kappa}$ depending on $n,\omega_{X},\theta, \omega_{K},a_{0}, h_{E}, s_{E},$
	$\|\varphi_{t,\epsilon}\|_{L^{\infty}(X)}, b, \|e^{ \frac{h}{b} } \|_{W^{1,b}(\omega_{X}) }, 
	\kappa, n_{s}$ and a lower bound of $\|e^{h} |s|_{D}^{2 \kappa} \|_{L^{1}(\omega_{X})}$
	such that
	\begin{align}
	\label{singular gradient estimate intro}
	|\nabla \tilde{ \varphi}_{t,\epsilon} |_{  \tilde{\omega}_{t}   }  \leq  \mathscr{C}_{\kappa} |s_{E}|_{h_{E}}^{ -a_{0}( \sigma_{E}-\lambda )  }, 
	\end{align}
	where $\sigma_{E}$ is chosen large and independent of $h, \kappa, t, \epsilon$. 
\end{thm}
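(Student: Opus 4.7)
My plan is to execute the integral/Moser iteration scheme of Theorem~\ref{thm gradient estimate with general F} on the Tsuji-transformed equation \eqref{approximating after Tsuji intro}, using $\tilde\omega_t=t\omega_X+\omega_K$ as the background K\"ahler metric in place of $\omega_X$. The weight $|s_E|_{h_E}^{-a_0(\sigma_E-\lambda)}$ in \eqref{singular gradient estimate intro} is the allowance needed for the blow-up of $\tilde{\varphi}_{t,\eps}$ near $E$, while the thresholds on $\kappa$ will be forced by the extra singular factor $S_\eps^{\kappa}$ on the right-hand side of \eqref{approximating after Tsuji intro}.

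On the smooth locus $X\setminus(D\cup E)$, I would first derive a Bochner-type differential inequality for $Q_{t,\eps}:=|\nabla\tilde{\varphi}_{t,\eps}|^{2}_{\tilde\omega_t}$ with respect to $\tilde\omega_{\tilde{\varphi}_{t,\eps}}$, via the standard Yau/Aubin computations used in the proof of Theorem~\ref{thm gradient estimate with general F}. The lower-order terms involve $\nabla P$ and $\Delta P$ with $P=h-\tilde f_{t,\eps}-\lambda\vphi_{t,\eps}$; they collect contributions from $h$, from $\kappa\log S_\eps$, from the drift $-\lambda\vphi_{t,\eps}$, and from the fixed smooth correction $\log(\omega_X^n/\tilde\omega_t^n)$. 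I would then multiply this inequality by the test function
\[
\xi_p \;=\; |s_E|_{h_E}^{2a_0(\sigma_E-\lambda)p}\,(1+Q_{t,\eps})^{p},\qquad p\geq 1,
\]
with $\sigma_E$ a sufficiently large fixed constant depending only on the background data $(n,\omega_X,\theta,\omega_K,h_E,s_E,a_0)$, and integrate against $\tilde\omega_{\tilde{\varphi}_{t,\eps}}^{n}$. The weight is chosen precisely so that when one cuts off along tubular neighborhoods of $D\cup E$ and passes to the limit, all boundary contributions vanish and the resulting integral identity is valid on all of $X$. Applying the Sobolev inequality with respect to $\tilde\omega_{\tilde{\varphi}_{t,\eps}}$ from \cite{guedj2024kahler}, exactly as in the proof of Theorem~\ref{thm gradient estimate with general F}, upgrades the identity to a reverse H\"older inequality that can be iterated in $p$, producing the weighted $L^\infty$-bound \eqref{singular gradient estimate intro}.

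The main obstacle is to bound the constants appearing at each Moser step by the $W^{1,b}(\omega_X)$-norm of $e^{h/b}$ uniformly in $t,\eps$. Compared with Theorem~\ref{thm gradient estimate with general F}, the new difficulty is the singular factor $S_\eps^{\kappa}$ inside $e^P$. After H\"older's inequality is used to split the integrand and pair the $h$-contribution with the Sobolev embedding exponent of $W^{1,b}$, one is left with a singular integral of the form $\int_X S_\eps^{\kappa\gamma}\omega_X^n$, whose uniform convergence as $\eps\to 0$ is governed by the complex codimension $n_s$ of $D$ and requires $\kappa\gamma>-n_s$. The conjugate exponent $\gamma$ takes different shapes in the ranges $b\in(n,2n)$ and $b\geq 2n$, since the Sobolev embedding of $e^{h/b}$ is sub-critical in the first regime but sits inside $L^\infty$ in the second; unwinding these two cases produces exactly the sharp thresholds $\kappa>\tfrac{n}{2}-\tfrac{b}{2n}n_s$ and $\kappa>\tfrac{n}{2}-n_s$ stated in the theorem. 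That $\sigma_E$ can be chosen independently of $h,\kappa,t,\eps$ is then automatic, since it is dictated solely by the fixed order of vanishing of $|s_E|_{h_E}$ along $E$ and by the fixed geometry of $(\theta,\omega_K,\phi_E)$.
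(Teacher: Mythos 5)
Your global strategy matches the paper's: Tsuji's trick, a Bochner-type differential inequality on $X\setminus E$, a weight vanishing to high order along $E$ so that boundary terms disappear, the $\omega_\psi$-Sobolev inequality of \cite{guedj2024kahler}, and Moser iteration; your weight $|s_E|_{h_E}^{2a_0(\sigma_E-\lambda)p}$ is (up to the bounded factor $e^{(\lambda-\sigma_E)\varphi_{t,\epsilon}}$) exactly the paper's $e^{pH}$ with $H=(\lambda-\sigma_E)\psi$. A minor remark: only $E$, not $D$, needs the vanishing weight — the approximating equation \eqref{approximating equation intro} is already smooth across $D$ since $S_\epsilon>0$ there. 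The real issue is the singular-integral analysis you sketch. The factor $S_\epsilon^{\kappa}$ in the measure $\omega_\psi^n=S_\epsilon^{\kappa}e^{h-\lambda\varphi_{t,\epsilon}+c_{t,\epsilon}}\omega_X^n$ is not an obstruction — it vanishes along $D$ and is the \emph{saving} factor. The dangerous term is $\nabla P$, which contains $\kappa\nabla\log S_\epsilon$ and, by \eqref{estimate of S eps}, is of order $S_\epsilon^{-1/2}$ near $D$.

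Concretely, after H\"older's inequality in the Moser step pairs the singular piece $\mathcal J_2=\int_X S_\epsilon^{-1/2}u^{2p+\frac12}\omega_\psi^n$ against $\big(\int_X u^{(2p+1)\alpha}\omega_\psi^n\big)^{(2p+\frac12)/((2p+1)\alpha)}$, the conjugate exponent $r$ landing on $S_\epsilon^{-1/2}$ satisfies $r<\tfrac{\alpha}{\alpha-1}$, and $\tfrac{\alpha}{\alpha-1}\downarrow n$ as $\alpha\uparrow\tfrac{n}{n-1}$. One is then left controlling $\int_X S_\epsilon^{\kappa-\frac r2}e^h\omega_X^n$ — an \emph{additive} exponent $\kappa-\tfrac r2$, not the multiplicative $\kappa\gamma$ you wrote. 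The convergence criterion is $\kappa-\tfrac r2>-n_s$, i.e.\ $\kappa>\tfrac n2-n_s$ in the limit $r\to n$; this is exactly case~(2) ($b\ge 2n$, $e^h$ bounded). For $b\in(n,2n)$ a further H\"older step with $e^h\in L^q$, $q$ near $\tfrac{2n}{2n-b}$, rescales the allowance by $\tfrac{q-1}{q}\to\tfrac{b}{2n}$ and tightens the threshold to $\kappa>\tfrac n2-\tfrac{b}{2n}n_s$. Your condition $\kappa\gamma>-n_s$ is automatically satisfied for every $\kappa\ge 0$ once $\gamma>0$, so it cannot produce the $\tfrac n2$ term; as written it would yield a false strengthening of the theorem, and the missing $\tfrac n2$ — coming precisely from $\nabla\log S_\epsilon\sim S_\epsilon^{-1/2}$ raised to the H\"older power $r\approx n$ — is the crux of the constraint that your proposal does not account for.
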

Up till now, we have not figured out whether Theorem \ref{thm gradient estimate singular intro} still holds if we remove the positive lower bound assumption of $\kappa$ when $n>2n_{s}$ (see Question \ref{que no angle constraints}). However, we can give an affirmative answer to this question by replacing the $W^{1,b}$ bound of $e^{ \frac{h}{b} }$ by a uniform lower bound of $\Delta_{ \tilde{\omega}_{t}} h$.  For this case, we can estimate $\Delta_{ \tilde{\omega}_{t}} \tilde{ \varphi}_{t,\epsilon} $  first and then derive the gradient estimate of $\tilde{ \varphi}_{t,\epsilon} $.

\begin{thm}
	\label{thm estimate under laplace h bound intro}
	We assume there exists $\mathscr{L} \geq 0$ such that
	$\Delta_{ \tilde{\omega}_{t}} h \geq -\mathscr{L}$ for any $t \in (0,1)$. We assume $e^{h} \in L^{p_{0}}(\omega_{X})$ for some $p_{0}>1$. Then the following estimates hold. \\
	(1). There exists a positive constant $\mathcal{L}$ depending on $n, \omega_{X}, \mathscr{L} , \|e^{h}\|_{ L^{p_{0}}(\omega_{X})   }$ and a lower bound of $\|e^{h} |s|_{D}^{2 \kappa} \|_{L^{1}(\omega_{X})}$
	such that
	\begin{align}
	\label{Laplacian estimate Laplacian h bdd intro}
	n+ \Delta_{ \tilde{\omega}_{t}} \tilde{\varphi}_{t,\epsilon} \leq \mathcal{L} |s_{E}|_{h_{E}}^{-2a_{0}K}, 
	\end{align}
	where $K$ is chosen large and independent of $h, \kappa, t, \epsilon$. \\
	(2). There exists a positive constant $\mathscr{G}$ depending on $n,\omega_{X},\theta, \omega_{K},a_{0}, h_{E}, s_{E},$
	$\|\varphi_{t,\epsilon}\|_{L^{\infty}(X)},\mathscr{L} , \|e^{h}\|_{ L^{p_{0}}(\omega_{X})   } $ and a lower bound of $\|e^{h} |s|_{D}^{2 \kappa} \|_{L^{1}(\omega_{X})}$
	such that
	\begin{align}
		\label{Gradient estimate Laplacian h bdd intro}
	|\nabla \tilde{ \varphi}_{t,\epsilon} |_{  \tilde{\omega}_{t}   }  \leq  \mathscr{G} |s_{E}|_{h_{E}}^{ -a_{0}( \sigma_{E}-\lambda )  }, 
	\end{align}
	where $\sigma_{E}$ is chosen large and independent of $h, \kappa, t, \epsilon$. 
\end{thm}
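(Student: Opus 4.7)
The plan is to follow the same singular-setting framework used for Theorem~\ref{thm gradient estimate singular intro}, but to attack the Laplacian first, exactly as the authors advertise. The key observation is that the hypothesis $\Delta_{\tilde\om_t} h\ge -\mathscr{L}$ is precisely what turns the Chern--Lu / Aubin--Yau differential inequality for $u := n+\Delta_{\tilde\om_t}\tilde\vphi_{t,\eps}$ into a self-contained bound on $X\setminus E$, so no positive lower bound on $\kappa$ is needed this time.

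For part (1), starting from the usual Chern--Lu inequality
\[
\Delta_{\tilde\om_{\tilde\vphi_{t,\eps}}}\log u \;\ge\; \frac{\Delta_{\tilde\om_t}\bigl(h-\tilde f_{t,\eps}-\lambda\vphi_{t,\eps}\bigr)}{u} \;-\; B\,\tr_{\tilde\om_{\tilde\vphi_{t,\eps}}}(\tilde\om_t),
\]
I would decompose the numerator into four pieces: $\Delta_{\tilde\om_t} h\ge -\mathscr{L}$ by hypothesis; $\kappa\Delta_{\tilde\om_t}\log S_\eps$ bounded below uniformly in $\eps$ by the Poincar\'e--Lelong-type inequality $\sqrt{-1}\p\bar\p\log S_\eps\ge -C\om_X$; $\Delta_{\tilde\om_t}\log(\om_X^n/\tilde\om_t^n)$ controlled by $-\Ricci(\tilde\om_t)$, which degenerates only through factors of $|s_E|_{h_E}^{-2}$; and $-\lambda\Delta_{\tilde\om_t}\vphi_{t,\eps}=-\lambda(u-n)-\lambda\Delta_{\tilde\om_t}\phi_E$, whose ratio against $u$ is bounded by $|\lambda|$ plus terms controlled by $|s_E|_{h_E}^{-2}$. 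The curvature term $B\tr_{\tilde\om_{\tilde\vphi_{t,\eps}}}(\tilde\om_t)$ is likewise controlled by $|s_E|_{h_E}^{-2}$ since $\tilde\om_t$ is smooth K\"ahler on $X\setminus E$.

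To pass from this pointwise bound to (\ref{Laplacian estimate Laplacian h bdd intro}), I multiply by $u^{2p-1}|s_E|_{h_E}^{4K a_0 p}$, integrate against $\tilde\om_{\tilde\vphi_{t,\eps}}^n$, and integrate by parts. The weight $|s_E|_{h_E}^{2K a_0 p}$ plays the double role of taming the blow-up of $\tilde\vphi_{t,\eps}=\vphi_{t,\eps}-\phi_E$ near $E$ and of absorbing the singular $|s_E|_{h_E}^{-2}$ factors arising above. Plugging the resulting $W^{1,2}$ estimate into the $\tilde\om_{\tilde\vphi_{t,\eps}}$-Sobolev inequality of \cite{guedj2024kahler} (whose constants are uniform in $t,\eps$ thanks to $e^h\in L^{p_0}$ and the lower bound on $\|e^h|s|_D^{2\kappa}\|_{L^1}$ via the normalisation of $c_{t,\eps}$) and iterating in $p$ produces (\ref{Laplacian estimate Laplacian h bdd intro}), with $K$ chosen large enough that the negative powers of $|s_E|_{h_E}$ generated by integration by parts are absorbed. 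For part (2) I would then rerun the gradient-estimate scheme of Theorem~\ref{thm gradient estimate singular intro} on $|\nabla\tilde\vphi_{t,\eps}|_{\tilde\om_t}^2$: the Laplacian bound just proved, together with $\Delta_{\tilde\om_t} h\ge -\mathscr{L}$, serves in place of the $W^{1,b}$ control of $e^{h/b}$ required there, and the same weighted Moser iteration, now with weight $|s_E|_{h_E}^{2a_0(\sigma_E-\lambda)}$, yields (\ref{Gradient estimate Laplacian h bdd intro}).

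The main obstacle will be bookkeeping the powers of $|s_E|_{h_E}^{-1}$ across the iteration: at each integration by parts, the derivative falling on the weight $|s_E|_{h_E}^{2K a_0 p}$ produces a negative power of $|s_E|_{h_E}$, and this must be absorbed into a small fraction of the good gradient term uniformly in $p$, $t$, $\eps$, $\kappa$, and (for part~(1)) in $h$. This forces $K$ and $\sigma_E$ to depend only on $a_0$, $n$, and the background geometry. A secondary technical point is verifying the uniform-in-$\eps$ Poincar\'e--Lelong lower bound $\sqrt{-1}\p\bar\p\log S_\eps\ge -C\om_X$, which must hold without any positivity assumption on $\kappa$.
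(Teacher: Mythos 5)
Your proposal for part (1) contains a genuine gap at the crucial step: the treatment of the curvature term $B\,\tr_{\omega_{\psi}}\tilde\omega_t$ in the Chern--Lu inequality. You assert that this term is ``controlled by $|s_E|_{h_E}^{-2}$ since $\tilde\omega_t$ is smooth K\"ahler on $X\setminus E$,'' but this confuses the (uniformly bounded) curvature \emph{constant} with the trace $\tr_{\omega_{\psi}}\tilde\omega_t$ itself. In fact $\tilde\omega_t=t\omega_X+\omega_K$ is smooth and uniformly equivalent to $\omega_X$ on all of $X$, so there is no $|s_E|^{-2}$ degeneration there; the issue is rather that $\tr_{\omega_\psi}\tilde\omega_t$ has no a priori upper bound whatsoever --- controlling it is essentially the content of the Laplacian estimate. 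A weight that is a pure power of $|s_E|_{h_E}$, introduced only at the integration stage, cannot absorb this term. The paper's proof hinges on \emph{building the weight into the quantity being estimated}: it sets $w:=e^{-K\psi}\tr_{\omega}\omega_{\psi}$ (with $\psi=\tilde\varphi_{t,\epsilon}=\varphi_{t,\epsilon}-\phi_E$), so that the Tsuji/Aubin--Yau computation yields $\Delta_{\psi}\log w\ge (K-C_{4.1})\tr_{\psi}\omega+\frac{\Delta P}{\tr_\omega\omega_\psi}-Kn$, and choosing $K\ge C_{4.1}+1+|\lambda|$ flips the sign of the trace term \emph{pointwise} before any integration occurs. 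Because $e^{-K\psi}=e^{-K\varphi_{t,\epsilon}}|s_E|_{h_E}^{2a_0K}$, this single exponential weight simultaneously kills the curvature term and provides the $|s_E|_{h_E}$-power decay near $E$; those are not two separate bookkeeping problems. With this choice, $\Delta h\ge-\mathscr{L}$ immediately gives the clean inequality $\Delta_\psi w\ge -C(w+1)$, and the Moser iteration with the $\omega_\psi$-Sobolev inequality of Lemma \ref{lemma Sobolev inequality uniform} is then routine, with no $p$-dependent derivatives of the weight to track at all.

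For part (2) your sketch is too thin to judge fully: ``the Laplacian bound serves in place of $W^{1,b}$ control'' does not explain how to handle the term $\mathcal{J}=-2\int Re[N_i\psi_{\bar i}]e^H u^{2p}\omega_\psi^n$ from Proposition \ref{prop integral inequality for gradient singular}, which still contains $\nabla N$ and hence $\kappa\nabla\log S_\epsilon\sim S_\epsilon^{-1/2}$. The paper's point is precisely that one must \emph{not} differentiate $N$: instead one writes $\langle\nabla P,\nabla\psi\rangle e^H e^P\omega^n=\langle\nabla e^P,\nabla\psi\rangle e^{(\lambda-\sigma_E)\psi}\omega^n$ and integrates by parts, so that $\Delta\psi$ (now bounded by part (1)) appears in place of $\nabla N$, and the $S_\epsilon^{-1/2}$ singularity never materialises. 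Without this specific manoeuvre, the angle constraint on $\kappa$ reappears and the theorem does not follow.

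A smaller inaccuracy: you repeatedly describe $\Delta_{\tilde\omega_t}\log(\omega_X^n/\tilde\omega_t^n)$ and $\Delta_{\tilde\omega_t}\phi_E$ as producing $|s_E|_{h_E}^{-2}$ factors. Since $\tilde\omega_t$ is uniformly comparable to $\omega_X$ and $\sqrt{-1}\p\bar\p\phi_E=\omega_K-\theta$, both of these are uniformly bounded and contribute no singularity.
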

 
 In Yau's paper \cite{yau1978ricci}, the Laplacian estimate of $\varphi$ solving (\ref{complex MA Kahler version}) is obtained by using maximum principle, and $tr_{\omega_{X}} \omega_{\varphi}$ can be controlled by the upper bounds for $-\Delta_{\omega_{X}} P$ and $P$.
 	Blocki \cite{blocki2003regularity} also uses maximum principle to show that $tr_{\omega_{X}} \omega_{\varphi}$ can be bouned by upper bounds for $-e^{\frac{P}{n-1}   }\Delta_{\omega_{X}} P$ and $e^{P}$. 
 	 For the singular case, (\ref{Laplacian estimate Laplacian h bdd intro}) is proved in \cite{eyssidieux2009singular} under a stronger assumptions: $\Delta_{\tilde{\omega}_{t}} h$ has a lower bound and $e^{h}$ has an upper bound, which are exactly the assumptions raised in  \cite{yau1978ricci}. Here we weaken the conditions of the upper bound of $e^{h}$ by just assuming $L^{p}(p>1)$-integrability of $e^{h}$.

We finally derive the Laplacian estimates of $\tilde{ \varphi}_{t,\epsilon} $ by assuming only $W^{1,b}$-regularity of $e^{h}$ following the strategy in \cite{ChenHe2012}. 

\begin{thm}
	\label{thm 2nd Laplacian estimate intro}
	(1). We assume
	$e^{ \frac{h}{b} } \in W^{1,b}(\omega_{X}) $  for some $b >2n$, and $\kappa$ satisfies 
	\begin{align*}
	\kappa>(\frac{1}{2}+\frac{n_{s}}{b}     )n -n_{s}. 
	\end{align*}
	Then there exists a positive constant $\tilde{ \mathscr{C}}_{\kappa}$ depending on $n,\omega_{X},\theta, \omega_{K},a_{0}, h_{E}, s_{E},$
	$\|\varphi_{t,\epsilon}\|_{L^{\infty}(X)}, b, \|e^{ \frac{h}{b} } \|_{W^{1,b}(\omega_{X}) }, 
	\kappa, n_{s}$ and a lower bound of $\|e^{h} |s|_{D}^{2 \kappa} \|_{L^{1}(\omega_{X})}$
	such that
	\begin{align}
	\label{2nd laplacian estimate 1 intro}
	n+\Delta_{ \tilde{\omega}_{t}} \tilde{\varphi}_{t,\epsilon}  \leq \tilde{ \mathscr{C}}_{\kappa} |s_{E}|_{h_{E}}^{-2a_{0}K}, 
	\end{align}
	where $K$ is chosen large and independent of $h, \kappa, t, \epsilon$. \\
	(2). We assume $e^{ \frac{h}{b} } \in W^{1,b}(\omega_{X}) $  for some $b >2n$, $e^{ \frac{h}{\hat{b}}  } \in W^{1,\infty}(\omega_{X}) $ for some $\hat{b}>n$, and $\kappa$ satisfies 
	\begin{align*}
	\kappa>\frac{n}{2}-n_{s}. 
	\end{align*}
	Then there exists a positive constant $\hat{ \mathscr{C}}_{\kappa}$ depending on $n,\omega_{X},\theta, \omega_{K},a_{0}, h_{E}, s_{E},$
	$\|\varphi_{t,\epsilon}\|_{L^{\infty}(X)}, b, \|e^{ \frac{h}{b} } \|_{W^{1,b}(\omega_{X}) }, \hat{b}, \| e^{ \frac{h}{\hat{b}} } \|_{ W^{1,\infty}(\omega_{X}) },
	\kappa, n_{s}$ and a lower bound of $\|e^{h} |s|_{D}^{2 \kappa} \|_{L^{1}(\omega_{X})}$
	such that 
	\begin{align}
	\label{2nd laplacian estimate 2 intro}
	n+\Delta_{ \tilde{\omega}_{t}} \tilde{\varphi}_{t,\epsilon}  \leq \hat{ \mathscr{C}}_{\kappa} |s_{E}|_{h_{E}}^{-2a_{0}K}, 
	\end{align}
	where $K$ is chosen large and independent of $h, \kappa, t, \epsilon$. 
\end{thm}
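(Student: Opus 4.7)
The plan is to follow the integral iteration scheme of Chen--He \cite{ChenHe2012}, adapted to the singular setting of (\ref{approximating after Tsuji intro}) with cutoff weights tailored to the exceptional divisor $E$ and the cone divisor $D$, in the spirit of the proofs of Theorems \ref{thm gradient estimate singular intro} and \ref{thm estimate under laplace h bound intro}. Set $u:=n+\Delta_{\tilde\om_{t}}\tilde\vphi_{t,\eps}=\tr_{\tilde\om_t}\om_{\tilde\vphi_{t,\eps}}$. The starting point is the Yau-type pointwise inequality
\begin{equation*}
\Delta_{\om_{\tilde\vphi_{t,\eps}}}\log u\;\geq\;\frac{\Delta_{\tilde\om_t}\bigl(h-\tilde f_{t,\eps}-\lambda\vphi_{t,\eps}\bigr)}{u}\;-\;C_1\,\tr_{\om_{\tilde\vphi_{t,\eps}}}\tilde\om_t
\end{equation*}
on $X\setminus(E\cup D)$, with $C_1$ depending only on a uniform bisectional-curvature lower bound for $\tilde\om_t=\om_K+t\om_X$. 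The problematic contributions on the right-hand side are $\Delta_{\tilde\om_t}h/u$ together with the two singular contributions hidden in $\Delta_{\tilde\om_t}\tilde f_{t,\eps}/u$, namely $\kappa\,\Delta_{\tilde\om_t}\log S_\eps$ near $D$ and $\Delta_{\tilde\om_t}\log(\om_X^n/\tilde\om_t^n)$ near $E$; the latter will be tamed by a weight $|s_E|_{h_E}^{2a_0K}$ built into the test function.

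The second step is to multiply the above inequality by $\chi_{t,\eps}:=|s_E|_{h_E}^{2a_0K}u^{p-1}e^{-\mu\vphi_{t,\eps}}$ with constants $K,\mu,p>0$ to be tuned, and integrate against $\om_{\tilde\vphi_{t,\eps}}^n=e^{h-\tilde f_{t,\eps}-\lambda\vphi_{t,\eps}}\tilde\om_t^n$. Integration by parts transfers $\Delta_{\tilde\om_t}h$ onto $\nabla\chi_{t,\eps}$, producing mixed terms of the shape $|\nabla h|\,|\nabla u|\,u^{p-2}\chi$ and $|\nabla h|\,u^{p-1}\chi$ multiplied by powers of $|s_E|_{h_E}$ and $|s|_D$ arising from $\nabla\chi_{t,\eps}$ and $\nabla\tilde f_{t,\eps}$. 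Writing $|\nabla h|=b\,e^{-h/b}|\nabla e^{h/b}|$, the hypothesis $e^{h/b}\in W^{1,b}(\om_X)$ gives $|\nabla h|^{b}e^{h}\in L^{1}(\om_X)$. H\"older's inequality with exponents $(b,b/(b-1))$, applied after factoring $e^h$ from the volume form, redistributes these mixed terms into one finite $W^{1,b}$-factor and a second factor involving $|s|_D^{-2\alpha}$ for some $\alpha>0$, followed by a Cauchy--Schwarz absorption of $|\nabla u|$ into $|\nabla\log u|^{2}u$. Finiteness of the resulting weighted integral is exactly the arithmetic threshold $\kappa>(1/2+n_s/b)n-n_s$ of part (1), the extra $n_s/b$ recording the integrability loss along the $n_s$ cone directions of $D$ when passing from $L^1$ to $L^{b/(b-1)}$.

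At this point the Sobolev inequality with respect to $\om_{\tilde\vphi_{t,\eps}}$ from \cite{guedj2024kahler}, together with the uniform $C^0$ bound on $\vphi_{t,\eps}$, upgrades the differential inequality to a recursion
\begin{equation*}
\Bigl\|u^{p/2}|s_E|_{h_E}^{a_0K}\Bigr\|_{L^{\frac{2n}{n-1}}}^{2}\;\leq\;C(p)\Bigl(\|u\|_{L^{p}(\chi_{t,\eps}\,\om_{\tilde\vphi_{t,\eps}}^n)}^{p}+1\Bigr),
\end{equation*}
with $C(p)$ of polynomial growth in $p$. A Moser iteration along $p_k=p_0\bigl(n/(n-1)\bigr)^k$ followed by geometric summation then yields the pointwise bound $u\leq\tilde{\mathscr C}_\kappa|s_E|_{h_E}^{-2a_0K}$ of (\ref{2nd laplacian estimate 1 intro}). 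For part (2), the extra hypothesis $e^{h/\hat b}\in W^{1,\infty}(\om_X)$ supplies the pointwise bound $|\nabla h|\leq\hat b\,\|e^{h/\hat b}\|_{W^{1,\infty}(\om_X)}\,e^{-h/\hat b}$, which short-circuits the H\"older step on the lower-order term $|\nabla h|u^{p-1}\chi$ and erases the $n_s/b$-loss, recovering the relaxed threshold $\kappa>n/2-n_s$; the hypothesis $b>2n$ is still required to absorb the mixed term $|\nabla h|\,|\nabla u|\,u^{p-2}\chi$. The main obstacle throughout is the simultaneous balancing of three competing constraints: (i) the weight $|s_E|_{h_E}^{2a_0K}$ must kill all boundary contributions at $E$ coming from the integration by parts, verified via the $\eps\to0$ approximation and the fine properties of the test function already exploited in Theorems \ref{thm gradient estimate singular intro} and \ref{thm estimate under laplace h bound intro}; (ii) the Moser constants $C(p)$ must stay summable after iteration, which forces a $p$-dependent scaling of $\mu$ and $K$; and (iii) the integrability of $|s|_D^{2\kappa}$ against the negative powers arising in the H\"older step must be guaranteed, which is precisely where the stated $\kappa$-thresholds are saturated.
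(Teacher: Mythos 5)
Your proposal follows the paper's general blueprint (Yau's differential inequality, integration by parts against a weighted test function, Hölder via $W^{1,b}$, the $\omega_\varphi$-Sobolev inequality of \cite{guedj2024kahler}, Moser iteration), but it omits a step that the paper treats as essential and that your argument does not appear able to bypass.

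When you integrate $\Delta h$ by parts against $\chi_{t,\epsilon}\,\omega_{\psi}^n$, the derivative falls not only on $u^{p-1}$ (producing the $|\nabla h||\nabla u|u^{p-2}\chi$ term you absorb by Cauchy--Schwarz), but also on $e^{-\mu\varphi_{t,\epsilon}}$ and on the Jacobian $e^{P}=e^{h-\tilde f_{t,\epsilon}-\lambda\varphi_{t,\epsilon}}$. Both produce terms of the shape $\langle\nabla\psi,\nabla h\rangle$ (respectively $\langle\nabla\varphi_{t,\epsilon},\nabla h\rangle$) times $u^{p-1}\chi$, i.e.\ the paper's $\mathcal{Q}_{2}$ and (part of) $\mathcal{Q}_{3}$. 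These are not of the shape $|\nabla h|u^{p-1}\chi$ unless you already know a pointwise bound on $|\nabla\psi|$. The paper closes this gap by first establishing Theorem \ref{thm gradient estimate singular intro} under $b>2n,\,\kappa>n/2-n_{s}$ and then invoking the bound $|\nabla\psi|\leq\mathscr{C}_{\kappa}|s_{E}|_{h_{E}}^{-a_{0}(\sigma_{E}-\lambda)}$ to reduce $\mathcal{Q}_{2}$ to a harmless multiple of $\int w^{2p}(|\nabla h|^{2}+1)$ and to estimate $|\nabla P|$ in $\mathcal{Q}_{3}$; the $e^{-K\psi}$ weight (with $2K\geq\sigma_{E}-\lambda$) is then precisely what cancels the $|s_{E}|^{-a_{0}(\sigma_{E}-\lambda)}$ cost. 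Your proposal never states that the a priori gradient estimate is an input, and without it the mixed $\nabla\varphi$-terms have no control --- Hölder against $W^{1,b}$ redistributes $|\nabla h|$ but does nothing for $|\nabla\psi|$. This is the concrete missing idea.

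A secondary issue: you suggest that the iteration "forces a $p$-dependent scaling of $\mu$ and $K$." If $K$ depends on $p$ and $p\to\infty$ along the Moser scale, the final bound would have a diverging power of $|s_{E}|_{h_{E}}$, contradicting the fixed exponent $-2a_{0}K$ in (\ref{2nd laplacian estimate 1 intro}). The paper avoids this by baking the weight into $w:=e^{-K\psi}\tr_{\omega}\omega_{\psi}$ (so $w^{2p}$ scales automatically with $p$) while keeping $K$ a fixed constant depending only on $C_{4.1},\lambda,a_{0}$; after bounding $\|w\|_{L^{\infty}}$ one unwinds $e^{K\psi}$ once to get the stated estimate. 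You should adopt this formulation, or else explain why a fixed $|s_{E}|^{2a_{0}K}$ prefactor on $u^{p-1}$ still yields a $p$-uniform recursion.
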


 {\bf Acknowledgements}:
This work was supported by NSFC grant No. 12171365 and 12326426.
K. Zheng would like to express his deepest gratitude to IHES and the K.C. Wong Education Foundation, also the CRM and the Simons Foundation, when parts of the work were undertaken during his stays.

\section{Gradient estimate}
In this section, we will prove  Theorem \ref{thm gradient estimate with general F} and derive a gradient estimate of $\varphi$. 

	Let $H$ be an auxiliary function determined later. We 
set
\begin{align*}
v:=|\nabla \varphi|^{2}+1, \quad u:=e^{H} v. 
\end{align*}

\begin{lemma}(cf. \cite[Lemma 6.4, Lemma 6.5]{Zheng2022})
	\label{lemma rough differential inequality}
	Set $-C_{1.1}:=\inf_{X}R_{ i \bar{i} j \bar{j} }(\omega_{X})$. Then
	\begin{align*}
	u^{-1} \Delta_{\varphi} u &\geq \Delta_{\varphi} H+2 Re [  (   H_{i} +P_{i}       )     \varphi_{\bar{i}}        ] v^{-1} \\
	& + [  (-C_{1.1}-1    )  +v^{-1}     ] tr_{ \omega_{\varphi}  } \omega_{X}+( tr_{\omega_{X} } \omega_{\varphi}-n   )v^{-1}. 
	\end{align*}
\end{lemma}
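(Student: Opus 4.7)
The plan is a direct Bochner-type computation, organized around the logarithmic identity
$$u^{-1}\Delta_\varphi u \;=\; \Delta_\varphi H + \Delta_\varphi \log v + |\nabla \log u|_\varphi^2,$$
together with $\Delta_\varphi \log v = v^{-1}\Delta_\varphi v - v^{-2}|\nabla v|_\varphi^2$. The heart of the matter is thus to bound $v^{-1}\Delta_\varphi v$ from below, while the gradient piece $|\nabla\log u|_\varphi^2$ is handled purely algebraically at the end.

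I would work at a fixed point in normal holomorphic coordinates for $\omega_X$ (so $g_{i\bar j}=\delta_{ij}$ and $dg_{i\bar j}=0$ at that point), and diagonalize $\omega_\varphi$ so $g_\varphi^{k\bar l}=\delta_{kl}/(1+\varphi_{k\bar k})$. Differentiating $v=g^{i\bar j}\varphi_i\varphi_{\bar j}+1$ twice and contracting against $g_\varphi^{k\bar l}$ produces three families of terms: (i) the manifestly nonnegative Hessian contribution $g_\varphi^{k\bar k}(|\varphi_{ki}|^2+|\varphi_{\bar i k}|^2)$; (ii) curvature terms $g_\varphi^{k\bar k} R_{k\bar k i\bar i}(\omega_X)\,\varphi_i\varphi_{\bar i}$, bounded from below by $-C_{1.1}|\nabla\varphi|^2\,\tr_{\omega_\varphi}\omega_X$; and (iii) third-order derivative terms $\varphi_{\bar i}\,g_\varphi^{k\bar k}\varphi_{ki\bar k}$ and its conjugate. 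The third-order terms are where the equation enters: differentiating $\log \det g_\varphi = P + \log\det g$ and applying the standard commutation identities converts them, modulo further curvature, into the cross term $2\,\text{Re}(P_i\varphi_{\bar i})$, which upon dividing by $v$ yields the $2\,\text{Re}(P_i\varphi_{\bar i})\,v^{-1}$ summand in the lemma.

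The trace terms come from carefully tracking the remaining second-order pieces. The identity $g_\varphi^{i\bar i}(g_{i\bar i}+\varphi_{i\bar i})=n$ rewrites as $\Delta_\varphi\varphi = n - \tr_{\omega_\varphi}\omega_X$, and together with the constant $+1$ in $v = |\nabla\varphi|^2+1$ this produces both the $\bigl[(-C_{1.1}-1)+v^{-1}\bigr]\tr_{\omega_\varphi}\omega_X$ coefficient (after using $|\nabla\varphi|^2 \le v$ to merge the curvature piece into the stated constant) and the $(\tr_{\omega_X}\omega_\varphi - n)v^{-1}$ summand. Finally, expanding
$$|\nabla\log u|_\varphi^2 = |\nabla H|_\varphi^2 + 2v^{-1}\text{Re}\bigl(g_\varphi^{i\bar j}H_i v_{\bar j}\bigr) + v^{-2}|\nabla v|_\varphi^2,$$
the last piece cancels the $-v^{-2}|\nabla v|_\varphi^2$ coming from $\Delta_\varphi\log v$, and the mixed term reorganizes, using $v_{\bar j}=\varphi_{i\bar j}\varphi_{\bar i}+\varphi_i\varphi_{\bar i\bar j}$ and $g_\varphi^{i\bar j}\varphi_{i\bar j}=n-\tr_{\omega_\varphi}\omega_X$, into the $2\,\text{Re}(H_i\varphi_{\bar i})v^{-1}$ summand, modulo the nonnegative square $|\nabla H|_\varphi^2$ that is simply dropped.

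The main obstacle is bookkeeping: one must carefully distinguish those third-order terms that are replaced via the Monge--Amp\`ere equation from those that reassemble into trace quantities, and keep every sign correct through multiple commutations of covariant derivatives. Once the normal-coordinate calculation is laid out and the manifestly nonnegative Hessian and $|\nabla H|_\varphi^2$ pieces are discarded, the inequality in the stated form emerges directly.
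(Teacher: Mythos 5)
Note first that the paper itself does not prove this lemma---it is quoted directly from \cite[Lemma 6.4, Lemma 6.5]{Zheng2022}---so there is no ``paper proof'' to compare against; what can be assessed is whether your sketch would actually close.

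Your overall framework (the logarithmic split, computing $\Delta_\varphi v$ by Bochner, differentiating the equation to replace the third-order piece $g_\varphi^{k\bar l}\varphi_{i k\bar l}$ by $P_i$, the curvature bound giving $-C_{1.1}|\nabla\varphi|^2\,tr_{\omega_\varphi}\omega_X$, and the Hessian contraction $g_\varphi^{i\bar j}\varphi_{k\bar j}\varphi_{\bar k i}= tr_{\omega_X}\omega_\varphi+tr_{\omega_\varphi}\omega_X-2n$) is the right skeleton. The gap is in how you dispatch the mixed term. You assert that $2v^{-1}\mathrm{Re}\bigl(g_\varphi^{i\bar j}H_i v_{\bar j}\bigr)$ ``reorganizes into $2\,\mathrm{Re}(H_i\varphi_{\bar i})v^{-1}$'' and that $|\nabla H|_\varphi^2$ is then ``simply dropped.'' Expanding $v_{\bar j}=\varphi_{k\bar j}\varphi_{\bar k}+\varphi_k\varphi_{\bar k\bar j}$ and using $g_\varphi^{i\bar j}\varphi_{k\bar j}=\delta^i_k-g_\varphi^{i\bar j}g_{k\bar j}$ gives
\begin{align*}
g_\varphi^{i\bar j}H_i v_{\bar j}=H_k\varphi_{\bar k}-g_\varphi^{i\bar j}g_{k\bar j}H_i\varphi_{\bar k}+g_\varphi^{i\bar j}H_i\,\varphi_k\varphi_{\bar k\bar j},
\end{align*}
and the last two summands do not cancel, are not of a fixed sign, and are not part of $|\nabla H|_\varphi^2$. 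The identity $g_\varphi^{i\bar j}\varphi_{i\bar j}=n-tr_{\omega_\varphi}\omega_X$ you invoke does not apply: in the mixed term one leg of $g_\varphi^{i\bar j}$ is on $H_i$, not on $\varphi$, so one never sees $g_\varphi^{i\bar j}\varphi_{i\bar j}$. These two residual terms must instead be absorbed by completing the square against $|\nabla H|_\varphi^2$ (so it cannot be ``simply dropped'') and by Cauchy--Schwarz against the nonnegative $(2,0)$-Hessian piece $v^{-1}g_\varphi^{i\bar i}|\varphi_{ki}|^2$ and part of the trace budget; tracking exactly which pieces get used is precisely what produces the specific constants $-C_{1.1}-1$, $v^{-1}tr_{\omega_\varphi}\omega_X$ and $-nv^{-1}$ in the stated inequality (and also why the lemma gives $-nv^{-1}$ rather than the $-2nv^{-1}$ the naive Hessian identity alone would yield). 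As written, your argument leaves two sign-indefinite terms unaccounted for, so it does not establish the inequality in the form stated. Redoing the final step by completing the square in $H_i$ and distributing $|\nabla H|_\varphi^2$ and $v^{-1}g_\varphi^{i\bar i}|\varphi_{ki}|^2$ between the two residuals, while keeping careful track of the $n$'s, is what is needed to finish.
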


\begin{prop}
	\label{prop gradient differential inequality}
	Choose $H=-\sigma \varphi$ where $\sigma=C_{1.1}+2$. Then there exists $C_{1.2}>0$ depending on $n$ and $C_{1.1}$ such that
	\begin{align}
	\label{GG1}
		\Delta_{\varphi} u \geq -C_{1.2} u+2 Re [  P_{i} \varphi_{\bar{i}}        ]  e^{H}-n e^{H}.
	\end{align} 
\end{prop}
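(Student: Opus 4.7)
The plan is to substitute $H = -\sigma\varphi$ into Lemma~\ref{lemma rough differential inequality} and choose $\sigma$ precisely so that the coefficient of $\tr_{\omega_\vphi}\omega_X$ becomes nonnegative, allowing us to throw away that (otherwise troublesome) term. The remaining negative contributions are absorbed into $-C_{1.2}u$ and the explicit term $-n e^H$.

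First I would compute the two $H$-dependent quantities in Lemma~\ref{lemma rough differential inequality}. Since $H=-\sigma\vphi$, we have $H_i = -\sigma\vphi_i$ and
\[
\Delta_\vphi H = -\sigma\,\tr_{\omega_\vphi}(\omega_\vphi - \omega_X) = -\sigma n + \sigma\,\tr_{\omega_\vphi}\omega_X.
\]
Also $2\Re[H_i \vphi_{\bar i}]v^{-1} = -2\sigma|\nabla\vphi|^2/v = -2\sigma(v-1)/v \geq -2\sigma$, using $v \geq 1$.

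Next, I plug these into Lemma~\ref{lemma rough differential inequality} and collect the terms multiplying $\tr_{\omega_\vphi}\omega_X$:
\[
\bigl(\sigma + (-C_{1.1}-1) + v^{-1}\bigr)\,\tr_{\omega_\vphi}\omega_X .
\]
Choosing $\sigma = C_{1.1}+2$ makes this coefficient $= 1 + v^{-1} \geq 0$, so the whole $\tr_{\omega_\vphi}\omega_X$ contribution may be discarded. Similarly, $\tr_{\omega_X}\omega_\vphi \cdot v^{-1} \geq 0$ can be dropped. What remains is
\[
u^{-1}\Delta_\vphi u \;\geq\; -\sigma n - 2\sigma + 2\Re[P_i \vphi_{\bar i}] v^{-1} - n v^{-1}.
\]
Multiplying both sides by $u = e^H v$ and using $e^H v \cdot v^{-1} = e^H$ converts the $P$-term and the $-nv^{-1}$ term into the desired $2\Re[P_i\vphi_{\bar i}]e^H$ and $-n e^H$, respectively, while the constant $-\sigma(n+2)$ becomes $-\sigma(n+2)u$. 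Setting $C_{1.2} := (C_{1.1}+2)(n+2)$ yields \eqref{GG1}.

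There is no real obstacle: the only thing to get right is the choice of $\sigma$, which is dictated by the requirement that the $(-C_{1.1}-1)\tr_{\omega_\vphi}\omega_X$ term in Lemma~\ref{lemma rough differential inequality} be defeated by $\sigma\,\tr_{\omega_\vphi}\omega_X$ coming from $\Delta_\vphi H$. Everything else is elementary bookkeeping and dropping of nonnegative quantities; the dependence of $C_{1.2}$ on $n$ and $C_{1.1}$ alone is manifest from the formula $C_{1.2}=(C_{1.1}+2)(n+2)$.
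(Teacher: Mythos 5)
Your proof is correct and follows essentially the same route as the paper: substitute $H=-\sigma\varphi$ into Lemma~\ref{lemma rough differential inequality}, collect the $\tr_{\omega_\varphi}\omega_X$ coefficient $\sigma-C_{1.1}-1+v^{-1}$, choose $\sigma=C_{1.1}+2$ to make it nonnegative, drop the nonnegative terms, bound $-2\sigma(v-1)/v\geq -2\sigma$, and multiply through by $u=e^{H}v$. The resulting constant $C_{1.2}=(C_{1.1}+2)(n+2)$ agrees with the paper's.
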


\begin{proof}
	Since $H=-\sigma \varphi$, it yields that
	\begin{align*}
	\Delta_{\varphi} H&=-\sigma \Delta_{\varphi} \varphi=-\sigma (n-tr_{ \omega_{\varphi}  } \omega_{X}), \\
	Re [  (   H_{i} +P_{i}       )     \varphi_{\bar{i}}        ]&=Re [  (   -\sigma \varphi_{i} +P_{i}       )     \varphi_{\bar{i}}        ]=-\sigma |\nabla \varphi|^{2}+Re [  P_{i} \varphi_{\bar{i}}        ]. 
	\end{align*}
	Inserting the above into the inequality in Lemma \ref{lemma rough differential inequality}, 
	\begin{align*}
		u^{-1} \Delta_{\varphi} u &\geq -\sigma (n-tr_{ \omega_{\varphi}  } \omega_{X})+2 (-\sigma |\nabla \varphi|^{2}+Re [  P_{i} \varphi_{\bar{i}}        ] ) v^{-1} \\
	& + [  (-C_{1,1}-1    )  +v^{-1}     ] tr_{ \omega_{\varphi}  } \omega_{X}+( tr_{\omega_{X} } \omega_{\varphi}-n   )v^{-1} \\
	& \geq -\sigma n+ (\sigma-C_{1.1}-1) tr_{ \omega_{\varphi}  } \omega_{X}-2\sigma(v-1)v^{-1}+2 Re [  P_{i} \varphi_{\bar{i}}        ]  v^{-1}-n v^{-1} \\
	& \geq  -\sigma(n+2)+2 Re [  P_{i} \varphi_{\bar{i}}        ]  v^{-1}-n v^{-1},
	\end{align*}
	where we choose $\sigma \geq C_{1.1}+1$. We conclude that
	\begin{align*}
	\Delta_{\varphi} u \geq -\sigma(n+2) u+2 Re [  P_{i} \varphi_{\bar{i}}        ]  e^{H}-n e^{H}. 
	\end{align*}
\end{proof}

The following Sobolev inequality with respect to $\omega_{\varphi}$, which follows from \cite[Theorem 2.6]{guedj2024kahler}, 
plays an essential  role in our proof of Theorem \ref{thm gradient estimate with general F}. 
\begin{lemma}
	\label{lemma Sobolev}
	We assume $e^{\frac{P}{\beta}  } \in W^{1,\beta}(\omega_{X})$ for some $\beta>n$. Then for any $\gamma  \in (1,\frac{n}{n-1})$, the following Sobolev inequality with respect to the metric $\omega_{\varphi}$ holds: there exists a positive constant $C_{sob}^{\gamma}$ depending on $n, \omega_{X}, \gamma, \| e^{\frac{P}{\beta}  }\|_{ W^{1,\beta} (\omega_{X})}$ such that for any $u \in W^{1,2}(\omega_{\varphi})$, 
	\begin{align*}
\big(	\int_{X} u^{2 \gamma} \omega_{\varphi}^{n} \big)^{ \frac{1}{\gamma} } \leq C_{sob}^{\gamma} \int_{X} \big( u^{2}+|\nabla u|_{\varphi }^{2} \big) \omega_{\varphi}^{n}. 
	\end{align*}
\end{lemma}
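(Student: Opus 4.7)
The plan is to invoke Theorem~2.6 of \cite{guedj2024kahler}, which gives exactly the kind of Sobolev inequality for a Monge--Amp\`ere metric $\omega_\varphi$ that is needed here; the remaining task is to verify that our hypotheses line up with theirs and then transcribe the conclusion.

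First I would rephrase the density condition. By the Monge--Amp\`ere equation (\ref{complex MA Kahler version}), $\omega_\varphi^n = e^P \omega_X^n$, so the geometric quantity entering a Sobolev inequality for $\omega_\varphi$ is governed by the density $e^P$. Guedj--T\^o's theorem is formulated so that the relevant input on the density is precisely $e^{P/\beta}\in W^{1,\beta}(\omega_X)$ for some $\beta>n$, which is exactly our hypothesis. The admissible range $\gamma\in(1, n/(n-1))$ for the Sobolev exponent is forced by the classical embedding $W^{1,2}\hookrightarrow L^{2n/(n-1)}$ on a real $2n$-dimensional manifold, and the strict inequality $\gamma<n/(n-1)$ is what allows their iteration argument to close in terms of the $W^{1,\beta}$-norm of $e^{P/\beta}$ alone, without needing an $L^\infty$ bound on the density (which would require the stronger $\beta>2n$).

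Next I would read off from \cite[Theorem 2.6]{guedj2024kahler} the precise dependency of the constant $C_{sob}^\gamma$, which involves only $n,\omega_X,\gamma,\|e^{P/\beta}\|_{W^{1,\beta}(\omega_X)}$, and transcribe the inequality applied to an arbitrary test function $u\in W^{1,2}(\omega_\varphi)$; the gradient on the right-hand side is automatically measured with respect to the metric $\omega_\varphi$, matching our notation $|\nabla u|_\varphi$.

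The hard part --- handled already in \cite[Theorem 2.6]{guedj2024kahler} and not something we would reprove here --- is the derivation of a Sobolev inequality for the \emph{unknown} metric $\omega_\varphi$ from a PDE condition on $P$. At a high level, the Ricci formula $\mathrm{Ric}(\omega_\varphi) = \mathrm{Ric}(\omega_X) - \sqrt{-1}\,\partial\bar\partial P$ converts the $W^{1,\beta}$-regularity of $e^{P/\beta}$ into integral curvature bounds for $\omega_\varphi$, and a Nash--Moser-type iteration then propagates $L^2$-norms up to $L^{2\gamma}$-norms uniformly in the unknown $\varphi$. Since this passage from PDE regularity to metric Sobolev is the essential difficulty and is already resolved in loc.\ cit., our contribution at the level of Lemma~\ref{lemma Sobolev} is only the verification that the hypotheses align.
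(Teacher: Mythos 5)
Your proposal essentially asserts that \cite[Theorem~2.6]{guedj2024kahler} is stated with exactly the hypothesis $e^{P/\beta}\in W^{1,\beta}(\omega_X)$ and exactly the conclusion of the lemma, so that the proof is a pure transcription. That is not what the citation provides, and there are two real (if modest) steps you are skipping.

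First, Guedj--T\^o's Theorem~2.6 is an $L^p$-density result: the hypothesis it needs is an $L^p$ bound on the Monge--Amp\`ere density $e^P = \omega_\varphi^n/\omega_X^n$ for some $p>1$, not a $W^{1,\beta}$ bound on $e^{P/\beta}$. So before invoking it one must derive an $L^p$ bound on $e^P$ from the assumed $W^{1,\beta}$ regularity. The paper does this via the Sobolev embedding $W^{1,\beta}(\omega_X)\hookrightarrow L^q(\omega_X)$ (on a real $2n$-manifold, for $q<2n\beta/(2n-\beta)$ when $\beta\le 2n$, or even H\"older continuity when $\beta>2n$), and then observes that $q/\beta$ can be taken $>2$ because $\beta>n$ gives $2n/(2n-\beta)>2$; this yields $e^P\in L^{3/2}(\omega_X)$. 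Your proposal elides this conversion entirely, and in fact states the ``relevant input'' of the cited theorem incorrectly.

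Second, the output of \cite[Theorem~2.6]{guedj2024kahler} is a Poincar\'e--Sobolev inequality controlling $\big(\int_X|u-\bar u|^{2\gamma}\omega_\varphi^n\big)^{1/\gamma}$ by $\int_X|\nabla u|_\varphi^2\,\omega_\varphi^n$ (with $\bar u$ the $\omega_\varphi$-mean), not the inequality with $u$ itself as written in the lemma. One still has to use $u^{2\gamma}\le C(\gamma)\big(|u-\bar u|^{2\gamma}+|\bar u|^{2\gamma}\big)$ together with H\"older for the mean term to absorb the extra piece into $\int_X u^2\,\omega_\varphi^n$ on the right. This is a short but necessary reduction, and your proposal does not mention it.

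Your high-level remark about the Ricci formula and Nash--Moser is tangential: the mechanism actually used in \cite{guedj2024kahler} is an $L^p$ control on the density feeding a uniform Sobolev--Poincar\'e inequality, not an integral Ricci curvature bound. Since you misidentify the hypothesis of the cited theorem and omit both the Sobolev-embedding reduction and the $(u-\bar u)\to u$ step, the proposal as written has a genuine gap, even though the overall strategy (reduce to Guedj--T\^o and patch up) is correct.
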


\begin{proof}
	WLOG, we may assume $\beta \leq 2n$, otherwise $e^{\frac{P}{\beta}  } \in C^{\alpha}(X)$ for some $\alpha \in (0,1)$, and the following argument still holds. 
	By the Sobolev embedding theorem, for any $q \in [1,\frac{2n\beta}{2n-\beta})$, $\|e^{ \frac{P}{\beta} } \|_{ L^{q}(X, \omega_{X}) }$ is bounded. Since
	\begin{align*}
	\lim_{q \rightarrow \frac{2n\beta}{2n-\beta} } \frac{q}{\beta}=\frac{2n}{2n-\beta}>2, 
	\end{align*}
	then we conclude that $\|e^{P} \|_{L^{\frac{3}{2}}(\omega_{X})  }$ is bounded. By \cite[Theorem 2.6]{guedj2024kahler}, for any $\gamma \in (1,\frac{n}{n-1})$, for all $u \in W^{1,2}(\omega_{\varphi})$, 
	\begin{align*}
	\big( \int_{X} |u-\bar{u} |^{2 \gamma} \big)^{ \frac{1}{\gamma} } \leq C(\omega_{X},n,\gamma, \| e^{\frac{P}{\beta}  }\|_{ W^{1,\beta} (\omega_{X}) } )  \int_{X} |\nabla u|_{\varphi}^{2} \omega_{\varphi}^{n},
	\end{align*}
	where $\bar{u}=\frac{1}{ Vol(\omega) } \int_{X} u \omega_{\varphi}^{n}$. Since
	\begin{align*}
	u^{2 \gamma} \leq C(\gamma) ( |u-\bar{u}|^{2 \gamma}+\bar{u}^{2\gamma}), 
	\end{align*}
	we obtain
	\begin{align*}
	\int_{M} u^{2 \gamma} \omega_{\varphi}^{n} \leq & C(\gamma) \big( \int_{X} |u-\bar{u}|^{2 \gamma} \omega_{\varphi}^{n}+ \int_{X} \bar{u}^{2\gamma} \omega_{\varphi}^{n} \big)\\
	& \leq C(\omega_{X},n,\gamma, \| e^{\frac{P}{\beta}  }\|_{ W^{1,\beta} (\omega_{X}) } ) \big( \big( \int_{X} |\nabla u|_{\varphi}^{2} \omega_{\varphi}^{n} \big)^{\gamma}+\big( \int_{X} u^{2} \omega_{\varphi}^{n} \big)^{\gamma} \big), 
	\end{align*}
	which yields that
		\begin{align*}
	\big(	\int_{X} u^{2 \gamma} \omega_{\varphi}^{n} \big)^{ \frac{1}{\gamma} } \leq C_{sob}^{\gamma} \int_{M} \big( u^{2}+|\nabla u|_{\varphi }^{2} \big) \omega_{\varphi}^{n}. 
	\end{align*}
\end{proof}

Now we give the proof of Theorem \ref{thm gradient estimate with general F}. 

\begin{proof}[Proof of \thmref{thm gradient estimate with general F}]
 Multiplying both sides of  (\ref{GG1}) with $-u^{2p}$ and integrating by parts, we obtain the following integral inequality: for any $p \geq \frac{1}{2}$, 
	\begin{align}
	&\frac{2p}{(p+\frac{1}{2})^{2}   } \int_{X} |\nabla(u^{p+\frac{1}{2} })|_{\varphi}^{2} \omega_{\varphi}^{n}=-\int_{X} \Delta_{\varphi} u \cdot u^{2p} \omega_{\varphi}^{n} \nonumber \\
	\leq & \int_{X} \big( C_{1.2} u+n e^{H}-2  Re [  P_{i} \varphi_{\bar{i}}        ]  e^{H} \big) u^{2p} \omega_{\varphi}^{n} \nonumber \\
	\leq & C_{1.3} \int_{X} \big( u+1\big)u^{2p}  \omega_{\varphi}^{n} +C_{1.3} J \label{GG2}
	\end{align}
	where $C_{1.3}>0$ depends on $C_{1.2}$ and $\|\varphi\|_{C^{0}(X)}$, and
	\begin{align*}
	J=-\int_{X} Re [  P_{i} \varphi_{\bar{i}}        ]  e^{H} u^{2p} \omega_{\varphi}^{n} . 
	\end{align*}
	
	\textbf{Step 1: Dealing with the term $J$}
	 
	 By the definition of $u$ and $v$, 
	\begin{align}
	|J|& \leq \int_{X} |\nabla P|\cdot (|\nabla \varphi| e^{\frac{H}{2}}) \cdot  e^{\frac{H}{2}} u^{2p} \omega_{\varphi}^{n} \nonumber \\
	& \leq C( \|\varphi\|_{C^{0}(X) }) \int_{X} |\nabla P|\cdot v^{\frac{1}{2}} e^{\frac{H}{2}} u^{2p} \omega_{\varphi}^{n} \nonumber\\
	&=C( \|\varphi\|_{C^{0}(X) }) \int_{X} |\nabla P| u^{2p+\frac{1}{2}} \omega_{\varphi}^{n}. \label{GG3}
	\end{align}
	Choose any $\alpha \in ( \frac{\beta}{\beta-1}, \frac{n}{n-1} )$. By the H\"older inequality, 
	\begin{align}
	\int_{X} |\nabla P| u^{2p+\frac{1}{2}} \omega_{\varphi}^{n} \leq \big( \int_{X} |\nabla P|^{\lambda }\omega_{\varphi}^{n} \big)^{\frac{1}{\lambda}} \big( \int_{X} u^{(2p+1) \alpha} \omega_{\varphi}^{n} \big)^{ \frac{2p+\frac{1}{2}}{(2p+1)\alpha}   }, \label{GG4}
	\end{align}
	where 
	\begin{align*}
	\lambda=\frac{ (2p+1) \alpha}{ (2p+1) \alpha-(2p+\frac{1}{2})     }<\frac{\alpha}{\alpha-1}<\beta, \quad \forall p \geq \frac{1}{2}. 
	\end{align*}
	By the H\"older inequality, 
	\begin{align}
 \int_{X} |\nabla P|^{\lambda }\omega_{\varphi}^{n} & =\int_{X} |\nabla P|^{\lambda} e^{P} \omega_{X}^{n} \nonumber\\
 &=\int_{X} |\nabla P|^{\lambda} e^{\frac{\lambda P}{\beta}} e^{(1-\frac{\lambda }{\beta} )P }  \omega_{X}^{n} \nonumber \\
 & \leq \big( \int_{X} |\nabla P|^{\beta} e^{P} \omega_{X}^{n} \big)^{ \frac{\lambda}{\beta}  } \big( \int_{X} e^{P} \omega_{X}^{n} \big)^{ 1-\frac{\lambda}{\beta}   } \nonumber \\
 &  \leq C(\omega_{X} ) \big( \int_{X} \beta^{\beta}|\nabla e^{\frac{P}{\beta}}|^{\beta} \omega_{X}^{n} \big)^{ \frac{\lambda}{\beta}  } \nonumber \\
 & \leq C(\omega_{X} ) \beta^{\lambda} (\| e^{\frac{P}{\beta}}\|_{ W^{1,\beta} (\omega_{X})})^{\lambda}. \label{GG5}
 	\end{align}
 	Combining (\ref{GG3}), (\ref{GG4})  and (\ref{GG5}), we obtain
 	\begin{align}
 	|J| \leq C(\|\varphi\|_{C^{0}(X) }, \beta, \| e^{\frac{P}{\beta}}\|_{ W^{1,\beta} (\omega_{X})}) \big( \int_{X} u^{(2p+1) \alpha} \omega_{\varphi}^{n} \big)^{ \frac{2p+\frac{1}{2}}{(2p+1)\alpha}   }. \label{GG6}
 	\end{align}
 	
 	\textbf{Step 2: Moser iteration}
 	
 	By (\ref{GG2}) and (\ref{GG6}), there exists $C_{1.4}>0$ depending on $C_{1.3}, \|\varphi\|_{C^{0}(X) },\beta$ and $\| e^{\frac{P}{\beta}}\|_{ W^{1,\beta} (\omega_{X})}$ such that
 		\begin{align}
 		\frac{2p}{(p+\frac{1}{2})^{2}   } \int_{X} |\nabla(u^{p+\frac{1}{2} })|_{\varphi}^{2} \omega_{\varphi}^{n} \leq C_{1.4} \int_{X} ( u^{2p+1}+u^{2p} ) \omega_{\varphi}^{n}+C_{1.4} \big( \int_{X} u^{(2p+1) \alpha} \omega_{\varphi}^{n} \big)^{ \frac{2p+\frac{1}{2}}{(2p+1)\alpha}   }. \label{GG7}
 		\end{align}
 	We choose $\gamma \in (\alpha,\frac{n}{n-1})$, then by Lemma \ref{lemma Sobolev} and (\ref{GG7}),
 	\begin{align}
 	\big( \int_{X} u^{ (2p+1)\gamma } \omega_{\varphi}^{n} \big)^{\frac{1}{\gamma} } &\leq C_{sob}^{\gamma} \int_{M} \big( u^{2p+1} +|\nabla ( u^{p+\frac{1}{2}} )|_{\varphi}^{2} \big) \omega_{\varphi}^{n} \nonumber \\
 	& \leq C_{1.5}p \int_{X} ( u^{2p+1}+u^{2p} ) \omega_{\varphi}^{n}+C_{1.5}p \big( \int_{X} u^{(2p+1) \alpha} \omega_{\varphi}^{n} \big)^{ \frac{2p+\frac{1}{2}}{(2p+1)\alpha}   }, \label{GG8}
 	\end{align}
 	where $C_{1.5}=2(C_{sob}^{\gamma}+1)C_{1.4}$. 
 	By the H\"older inequality and Young's inequality, 
 	\begin{align*}
 	\int_{X} ( u^{2p+1}+u^{2p} ) \omega_{\varphi}^{n}& \leq C(\omega_{X}) \big( \big(\int_{X} u^{(2p+1)\alpha} \omega_{\varphi}^{n} \big)^{ \frac{1}{\alpha} }+\int_{X} u^{(2p+1)\alpha} \omega_{\varphi}^{n}  \big)^{ \frac{2p}{ (2p+1) \alpha}  } \big) \\
 	& \leq C(\omega_{X}) \big( 2 \big(\int_{X} u^{(2p+1)\alpha} \omega_{\varphi}^{n} \big)^{ \frac{1}{\alpha} }+1\big), \\
 	\big( \int_{X} u^{(2p+1) \alpha} \omega_{\varphi}^{n} \big)^{ \frac{2p+\frac{1}{2}}{(2p+1)\alpha}   }& 
 	\leq \big(\int_{X} u^{(2p+1)\alpha} \omega_{\varphi}^{n} \big)^{ \frac{1}{\alpha} }+1.
 	\end{align*}
 	Inserting back to (\ref{GG8}), we obtain
 	\begin{align}
 	\big( \int_{X} u^{ (2p+1)\gamma } \omega_{\varphi}^{n} \big)^{\frac{1}{\gamma} } \leq (C_{1.6}p) \big( \big(\int_{X} u^{(2p+1)\alpha} \omega_{\varphi}^{n} \big)^{ \frac{1}{\alpha} }+1 \big),  \label{GG9}
 	\end{align}
 	where $C_{1.6}$ depends on $C_{1.5}$ and $\omega_{X}$. 
 	
 	We set $w=u^{p+\frac{1}{2}}$, then
 	by (\ref{GG9}), the interpolation inequality and Young's inequality, for any $\epsilon>0$,
 	\begin{align*}
 	\|w\|_{L^{2 \gamma}(\omega_{\varphi})} &\leq (C_{1.6}p)^{\frac{1}{2}} \big( \|w\|_{L^{2 \alpha}(\omega_{\varphi})}+1 \big) \\
 	& \leq \big(C_{1.6}p)^{\frac{1}{2}} \big( \epsilon \|w\|_{L^{2 \gamma}(\omega_{\varphi})}+\epsilon^{-\frac{1-\frac{1}{2\alpha}}{ \frac{1}{2\alpha}-\frac{1}{2\gamma}   } } \|w\|_{L^{1}(\omega_{\varphi})}
 	+1 \big). 
 	\end{align*}
 	By choosing $\epsilon$ such that
 	\begin{align*}
 	(C_{1.6}p)^{\frac{1}{2}} \epsilon =\frac{1}{4}, 
 	\end{align*}
 	we conclude that
 	\begin{align*}
 	\|w\|_{L^{2 \gamma}(\omega_{\varphi})} \leq 2(C_{1.6}p)^{\frac{1}{2}}  \big( p^{\Lambda} \|w\|_{L^{1}(\omega_{\varphi})}
 	+1 \big), 
 	\end{align*}
 	where $\Lambda$ is some positive constant depending only on $\alpha$ and $\gamma$. Then we obtain
 	\begin{align}
 	\|u\|_{ L^{(2p+1)\gamma}   ( \omega_{\varphi})}&=( \|w\|_{L^{2 \gamma}(\omega_{\varphi})} )^{ \frac{1}{p+\frac{1}{2}} } \nonumber \\
 	& \leq 2(C_{1.6}p)^{\frac{1}{2p+1}} \big( p^{ \frac{\Lambda}{p+\frac{1}{2}}  } (\|w\|_{L^{1}(\omega_{\varphi})})^{ \frac{1}{p+\frac{1}{2}}    }+1 \big) \nonumber\\
 	& \leq C(\Lambda)C_{1.6} \big( \|u \|_{ L^{ p+\frac{1}{2}  }(\omega_{\varphi})    } +1 \big). \label{GG10}
 	\end{align}
 		By normalizing, we may assume $Vol(\omega_{\varphi})=Vol(\omega_{X})=1$, then $ \|u \|_{ L^{p}( \omega_{\varphi}^{n}  ) }	$ is nondecreasing with repect to $p$. 
 	If for any $p \geq \frac{1}{2}$, $\|u \|_{L^{(2p+1) \alpha}(\omega_{\varphi}^{n})  } \leq 1$, then the assertion holds immediately. Otherwise, we set 
 	\begin{align*}
 	p_{0}=\inf \{ p \geq \frac{1}{2}: \|u \|_{L^{(2p+1) \alpha}( \omega_{\varphi}^{n})  } > 1   \}. 
 	\end{align*}
 	Then for any $p <p_{0}$, we have  $\|u \|_{L^{(2p+1) \alpha}( \omega_{\varphi}^{n})  } \leq  1$. 		
 	By (\ref{GG9}), for any $p \geq p_{0}$, 
 	\begin{align*}
 	\big( \int_{X} u^{ (2p+1)\gamma } \omega_{\varphi}^{n} \big)^{\frac{1}{\gamma} } \leq (2C_{1.6}p) \big(\int_{X} u^{(2p+1)\alpha} \omega_{\varphi}^{n} \big)^{ \frac{1}{\alpha} }, 
 	\end{align*}
 	which allows us to 
 	apply the Moser iteration (the start of the iteration is by taking $p=p_{0}$). 
 	To obtain the boundness of $\|u\|_{L^{\infty}(X)}$, it suffices to bound $\|u \|_{L^{(2p_{0}+1) \alpha}( \omega_{\varphi}^{n})  } $, which follows from  (\ref{GG10}): 
 	\begin{align*}
 	\|u\|_{ L^{ (2p+1) \alpha }( \omega_{\varphi}^{n})   } \leq C(\Lambda )C_{1.6}\big( \|u\|_{ L^{  (p+\frac{1}{2})\frac{\alpha}{\gamma}  }( \omega_{\varphi}^{n})   } +1 \big) \leq 2C(\Lambda )C_{1.6}. 
 	\end{align*}
 	We complete the proof. 
\end{proof}

\section{Singular equations: Preliminary}
From  now on, we consider the following singular equations. 
Let $(X,\omega_{X})$ be a compact K\"ahler manifold and $\theta$ is a big and semipositive closed $(1,1)$-form.
In this article, we derive a priori estimates for the complex Monge-Amp\`ere equation 
\begin{align}
\label{SCMA}
\theta_{\vphi}^{n} :=(\theta+\sqrt{-1}  \p\bar\p\vphi)^n
=  e^{P}  \om_X^n,\quad P:=h-f+F(x,\vphi),
\end{align}
where $h$ is a given function, $f$ is the singularities we will prescribe, and we assume that $F(x,\vphi):=-\lambda\vphi,\quad \lambda\in \mathbb R$.

A necessary condition of the solvability of (\ref{SCMA}) is the volume constraint
\begin{align}\label{CMA normalization}
\int_Xe^{P}  \om_X^n=V:=\int_X\theta^n.
\end{align}




\subsection{Singularities/degeneration of the RHS $e^{-f}$}\label{Singularities}
Let $D$ be a set consisting with finite number, disjoint, smooth components in $X$ with the complex co-dimension of $n_s$. 

When $D$ is a divisor, $n_s=1$, then $e^{-f}$ is chosen as 
\begin{align*}
e^{-f}:=|s|_{D}^{2 \kappa} , \quad \kappa \geq 0,
\end{align*}
where $| \cdot |_{D}$ is a smooth Hermitian metric on the corresponding line bundle of $D$ and $s$ is a section of the line bundle. For $\epsilon \in (0,\frac{1}{4})$, we set $S_{\epsilon}:=|s|_{D}^{2}+\epsilon$, then by \cite[Lemma 2.11]{Zheng2022}, we have the following nice properties: there exists a nonnegative constant $C_{s}$ independent of $\epsilon$ such that
\begin{align}
\label{estimate of S eps}
C_{s} S_\eps^{-1}\om\geq  \sqrt{-1} \p\bar\p \log S_\eps\geq -C_{s} \om,\quad |\nabla S_\eps|^2 \leq  C_{s} S_\eps.
\end{align}

Inspiring by the above, for general $n_{s}$, we say $e^{-f}$ has singularities of divisorial type along $D$, if $e^{-f}$ is singular or degenerate near $D$ polynomially, that is 
\begin{align*}
e^{-f}:=|s|_{D}^{2 \kappa} \quad \kappa \geq 0, 
\end{align*}
where $|s|_{D}^{2}: X \rightarrow \mathbb{R}$ is a globally defined smooth function on $X$ vanishing on $D$, which satisfies (\ref{estimate of S eps}) and the following integral condition:
\begin{align}
\label{integral condition of f}
\int_{X} (|s|_{D}^{2})^{\alpha} \omega_{X}^{n}<+\infty, \quad \forall \alpha >-n_{s}; \quad 
\int_{X} (|s|_{D}^{2})^{\alpha} \omega_{X}^{n}=+\infty, \quad \forall  \alpha \leq -n_{s}.  
\end{align}

\subsection{The function $h$}
When $D$ is a divisor, $h$ satisfies the cohomology condition
\begin{align*}
Ric(\om_X)- \sqrt{-1} \p\bar\p(\log |s|_{D}^{2 \kappa}  +h)=\lambda\theta
\end{align*}
where $\theta\in C_1(X,D):=C_1(X)+\kappa C_1(L_D)$,
the singular Monge-Amp\`ere equation \eqref{SCMA} we consider becomes the following equation
\begin{align}
\label{singular MA}
\theta_{\varphi}^{n}:=
(  \theta+ \sqrt{-1} \partial \bar{\partial} \varphi  )^{n}= |s|_{D}^{2 \kappa} e^{ h  -\lambda \vphi} \omega_X^{n} .
\end{align}

Actually, this equation gives $Ric(\theta_\vphi)=\lambda\theta_\vphi$, which are called singular K\"ahler-Einstein metrics in the literature.

\subsection{Main equation and convenient notations}	
By the definition of $\phi_{E}$, on $X \setminus E$, we can rewrite $(\ref{approximating equation intro})$ as 
\begin{align*}
&( \omega_{t} + \sqrt{-1} \partial \bar{\partial} \phi_{E}+ \sqrt{-1} \partial \bar{\partial} (  \varphi_{t,\epsilon}-\phi_{E}   )    )^{n}=(  \omega_{t}+ \sqrt{-1} \partial \bar{\partial} \varphi_{    t,\epsilon}      )^{n} \\
=&e^{ h-f_{t,\epsilon}-\lambda  \varphi_{t,\epsilon}   } e^{\log \omega_{X}^{n}-\log (\omega_{t} + \sqrt{-1} \partial \bar{\partial} \phi_{E})^{n}  } (\omega_{t} + \sqrt{-1} \partial \bar{\partial} \phi_{E})^{n}. 
\end{align*}
We use the notations \eqref{Notations} and set 
\begin{align*}
F_{t,\epsilon}:=-\lambda\vphi_{t,\epsilon}, \quad
 \tilde P_{t,\epsilon}&  : =h-\tilde f_{t,\eps}+F_{t,\epsilon}, 
\end{align*}
then on $X \setminus E$, \eqref{approximating equation intro} becomes
\begin{align}
\label{MA using Koriada}
	\omega_{\vphi_{t,\epsilon}}^{n}=( \tilde\omega_{t}  +\sqrt{-1}   \partial \bar{\partial}  \tilde{ \varphi}_{t,\epsilon}            )^{n}=e^{  \tilde P_{t,\epsilon}   } \tilde\omega_{t}^{n}. 
\end{align}

For simplicity, we write
\begin{align*}
\psi:=\tilde{ \varphi}_{t,\epsilon},\quad P:=\tilde P_{t,\epsilon},  \quad F:=F_{t,\epsilon} , \quad f:=\tilde f_{t,\eps}, \quad \omega:=\tilde\omega_{t}, \quad \omega_{\psi}:=\tilde\omega_{t}  +\sqrt{-1} \partial \bar{\partial}  \tilde{ \varphi}_{t,\epsilon}     . 
\end{align*}

Therefore, (\ref{MA using Koriada})  is further rewritten as the main equation we consider from now on:
\begin{align}
\label{MA using Koriada smooth}
\omega_{\psi}^n=(\omega+\sqrt{-1} \partial \bar{\partial } \psi    )^{n}=e^{P} \omega^{n},\quad P=F+h-f.
\end{align}

	We emphasis again that $h$ is a given function, $F=-\lambda(\psi+\phi_E)$ is the nonlinear term deriving from the geometric problem,  $e^{-f}$ represents the singular/degenerate term containing $$S_\eps^\kappa=(|s|_{D}^{2}+\epsilon)^\kappa, \quad \kappa \geq 0.$$ We may further assume $S_{\epsilon} \leq 1$ for any $\epsilon \in (0,\frac{1}{4})$. 

The notions $\tri, \tri_\psi$ stand for the Laplace operators with respect to the smooth metrics $\om$, $\om_\psi$, respectively.

\subsection{Useful lemmas}	

The following Sobolev inequality is a version of Lemma \ref{lemma Sobolev} under weaker assumptions, whose proof follows almost the same argument except for a slight modification. We still present its proof here for completeness. 
\begin{lemma}
	\label{lemma Sobolev inequality uniform}
	Fix $\kappa \geq 0$. 
	We assume $e^{h}  \in L^{ p_{0} }(\omega_{X}  )$ for some $p_{0}>1$.  
	Then for any $\gamma \in (1,\frac{n}{n-1})$, there exists a positive uniform constant $C_{sob}^{\gamma}$ depending on $n,\omega_{X}, p_{0}, \theta, \gamma, \| e^{h} \|_{L^{p_{0}}(\omega_{X})}$ and a lower bound of $\| e^{h} |s|_{D}^{2 \kappa} \|_{L^{1}(\omega_{X})}$
	 such that for any $u \in C^{1}(X)$,
\begin{align*}
\big( \int_{X} u^{2\gamma} \omega_{\psi}^{n} \big)^{\frac{1}{\gamma} } \leq C_{sob}^{\gamma} \int_{X}
\big( u^{2}+|\nabla u|_{\psi }^{2} \big) \omega_{\psi}^{n}.  
\end{align*}
\end{lemma}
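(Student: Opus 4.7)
The plan is to reduce this to the K\"ahler Sobolev inequality of \cite[Theorem 2.6]{guedj2024kahler} applied with the background K\"ahler metric taken to be $\om=\tilde\om_t=t\om_X+\om_K$, which is uniformly K\"ahler in $t\in(0,1)$. Since $\om_\psi^n=e^P\om^n$, the only thing I need is a uniform $L^r(\om^n)$ bound on the density $e^P$ for some $r>1$ whose constant is independent of $t$ and $\eps$. Once this is in hand, the passage from the mean-zero Sobolev inequality delivered by \cite{guedj2024kahler} to the full inequality stated in the lemma is identical to the corresponding passage in the proof of \lemref{lemma Sobolev}, using $u^{2\gamma}\leq C(\gamma)\bigl(|u-\bar u|^{2\gamma}+\bar u^{2\gamma}\bigr)$ and $|\bar u|\leq\|u\|_{L^2(\om_\psi^n)}$ by H\"older.

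To obtain the $L^r$ bound, I would first unwind the definitions of $P$ and of the normalization constant $c_{t,\eps}$ to write
\[
e^P\om^n=e^{h-\lambda\psi}\,e^{c_{t,\eps}}S_\eps^\kappa\,\om_X^n,
\]
so that for $r>1$,
\[
\int_X e^{rP}\om^n=\int_X\bigl(e^{h-\lambda\psi}\,e^{c_{t,\eps}}\,S_\eps^\kappa\bigr)^{r}\frac{(\om_X^n)^{r}}{(\om^n)^{r-1}}.
\]
Because $\om\geq\om_K$, the ratio $\om_X^n/\om^n$ is dominated by the fixed smooth positive function $\rho:=\om_X^n/\om_K^n$. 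Combined with $\|\psi\|_{L^\infty(X)}\leq C$, with $\kappa\geq 0$ and $S_\eps\leq 2$, this yields
\[
\int_X e^{rP}\om^n\leq C\,e^{rc_{t,\eps}}\int_X e^{rh}\,\om_X^n,
\]
for a constant $C$ depending on $n,\om_X,\om_K,\|\psi\|_{L^\infty},\kappa,r$.

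Next I would control $c_{t,\eps}$ uniformly from above through the normalization identity: since $S_\eps\geq|s|_D^2$ and $e^{-\lambda\psi}$ is bounded below by $\|\psi\|_{L^\infty}$,
\[
e^{c_{t,\eps}}=\frac{[\om_t]^n}{\int_X S_\eps^\kappa\,e^{h-\lambda\psi}\om_X^n}\leq\frac{[\om_t]^n}{C^{-1}\|e^{h}|s|_D^{2\kappa}\|_{L^1(\om_X)}},
\]
which is uniformly bounded in $t,\eps$ thanks to the assumed lower bound on $\|e^{h}|s|_D^{2\kappa}\|_{L^1(\om_X)}$. Finally, choosing any $r\in(1,p_0]$ makes $\int_X e^{rh}\om_X^n$ finite by the hypothesis $e^h\in L^{p_0}(\om_X)$, and hence $\|e^P\|_{L^r(\om^n)}$ is uniformly bounded.

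The only conceptual difference from \lemref{lemma Sobolev} is the route to the $L^r(\om^n)$-density bound: there it came from a Sobolev embedding of $e^{P/\beta}$ on $(X,\om_X)$, while here it comes from the explicit form of $e^P$ together with the normalization of $c_{t,\eps}$. The main point requiring care, which I expect to be the principal (mild) obstacle, is verifying that every $t,\eps$-dependent quantity appearing in the chain above (the density ratio $\om_X^n/\om^n$, the constant $c_{t,\eps}$ and the integrals against $S_\eps^\kappa$) stays uniformly bounded; each of these is controlled respectively by the K\"ahlerness of $\om_K$, the assumed $L^1$ lower bound, and the elementary inequality $S_\eps\in[|s|_D^2,2]$.
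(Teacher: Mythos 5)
Your overall strategy is the same as the paper's — reduce to the Sobolev inequality of \cite[Theorem 2.6]{guedj2024kahler} by verifying a uniform $L^r$ bound, $r>1$, on the suitably normalized density of $\omega_\psi^n$, using the explicit form of the density, the $C^0$ bound on $\varphi_{t,\epsilon}$, the normalization of $c_{t,\epsilon}$, and the assumed lower bound on $\|e^h|s|_D^{2\kappa}\|_{L^1(\omega_X)}$. The only structural difference is that you take the reference metric to be $\tilde\omega_t=t\omega_X+\omega_K$ and bound $e^P$ in $L^r(\tilde\omega_t^n)$, whereas the paper keeps the fixed reference $\omega_X$ and bounds the normalized density $\frac{[\omega_X]^n}{[\omega_\psi]^n}\frac{\omega_\psi^n}{\omega_X^n}$ in $L^{p_0}(\omega_X^n)$. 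The paper's choice avoids having to argue that the Sobolev constant supplied by \cite{guedj2024kahler} is stable under the $t$-dependent change of reference metric; with your choice you do need to check that, but the uniform two-sided comparison $\omega_K\leq\tilde\omega_t\leq\omega_X+\omega_K$ makes it harmless, so the route is equally valid.

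There is, however, a genuine slip in your write-up: you repeatedly write $\psi$ where $\varphi_{t,\epsilon}$ is meant. In particular you use ``$\|\psi\|_{L^\infty(X)}\leq C$,'' which is false. Recall $\psi=\tilde\varphi_{t,\epsilon}=\varphi_{t,\epsilon}-\phi_E$ with $\phi_E=a_0\log|s_E|_{h_E}^2\leq 0$, so $\psi\to+\infty$ along $E$; only $\varphi_{t,\epsilon}$ enjoys a uniform $L^\infty$ bound. The correct density identity is
\begin{align*}
e^P\,\tilde\omega_t^n \;=\; \omega_\psi^n \;=\; e^{h-\lambda\varphi_{t,\epsilon}+c_{t,\epsilon}}\,S_\epsilon^\kappa\,\omega_X^n,
\end{align*}
and likewise in the formula for $e^{c_{t,\epsilon}}$ the denominator involves $e^{-\lambda\varphi_{t,\epsilon}}$, not $e^{-\lambda\psi}$. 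Once you replace $\psi$ by $\varphi_{t,\epsilon}$ in those two places, every inequality you use (boundedness of the exponential factor, $S_\epsilon\leq C$, $\omega_X^n/\tilde\omega_t^n\leq\omega_X^n/\omega_K^n$, and the lower bound on $\|e^h|s|_D^{2\kappa}\|_{L^1(\omega_X)}$) is correct and the argument closes exactly as in the paper.
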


\begin{proof}
	By the definition of $\omega_{\psi}$, 
	\begin{align*}
	[\omega_{\psi}]= [\omega_{t}]=[ \theta+t \omega_{X}  ] \leq C(\theta) [\omega_{X}]. 
	\end{align*}
	Since $\| \varphi_{t,\epsilon} \|_{L^{\infty}(X) }$ is uniformly bounded and $e^{h}   \in L^{p_{0}}( \omega_{X}^{n}  )$, then
	\begin{align*}
	\int_{X} \big( \frac{[\omega_{X} ]^{n}   }{ [ \omega_{\psi}  ]^{n}} \frac{ \omega_{\psi}^{n} }{ \omega_{X}^{n} } \big)^{p_{0}} \omega_{X}^{n}&= \int_{X} \big( \frac{[\omega_{X} ]^{n}   }{ [ \omega_{t}  ]^{n}} S_{\epsilon}^{\kappa} e^{h -\lambda \vphi_{t,\epsilon}+c_{t,\epsilon}  }
	\big)^{p_{0}} \omega_{X}^{n} \\
	&=\int_{X} \big( [\omega_{X}]^{n} \frac{ S_{\epsilon}^{\kappa}  e^{h -\lambda \vphi_{t,\epsilon} }  }{    \int_{X}  S_{\epsilon}^{\kappa}  e^{h-\lambda \vphi_{t,\epsilon} } \omega_{X}^{n}  } \big)^{p_{0}} \omega_{X}^{n}  \\
	& \leq \frac{C([\omega_{X}]^{n}, \| \varphi_{t,\epsilon} \|_{L^{\infty}(X) } )   }{  \big( \int_{X} e^{h} |s|_{D}^{2 \kappa} \omega_{X}^{n} \big)^{p_{0}}       } \int_{X} (e^{ h} )^{p_{0}} \omega_{X}^{n}.
	\end{align*}
	Applying \cite[Theorem 2.6]{guedj2024kahler}, for any $\gamma \in (1,\frac{n}{n-1})$, there exists a positive constant $\tilde{C}_{sob}^{\gamma}$ depending on $n,\omega_{X}, p_{0}, \theta, \gamma, \| e^{h} \|_{L^{p_{0}}(\omega_{X})}$ and a lower bound of $\| e^{h} |s|_{D}^{2 \kappa} \|_{L^{1}(\omega_{X})}$ such that for any $u \in C^{1}(X)$, 
\begin{align*}
\big(	\frac{1}{ V_{t} } \int_{X} |u-\bar{u} |^{2\gamma} \omega_{\psi}^{n} \big)^{\frac{1}{\gamma}} \leq \tilde{C} \frac{1}{V_{t}} \int_{X} |\nabla u|_{\psi }^{2} \omega_{\psi}^{n},
\end{align*}
where 
\begin{align*}
V_{t}=[\omega_{\psi}]^{n}=[\omega_{t}]^{n}, \quad \bar{u}=\frac{1}{ V_{t} } \int_{X} u \omega_{\psi}^{n}. 
\end{align*}
By the Minkowski inequality and the H\"older inequality, 
\begin{align*}
\int_{X} u^{2\gamma} \omega_{\psi}^{n} & \leq C(\gamma) \big( \int_{X} |u-\bar{u}|^{2\gamma} \omega_{\psi}^{n} +\int_{X} \bar{u}^{2\gamma} \omega_{\psi}^{n} \big) \\
& \leq C(\gamma,\tilde{C}_{sob}^{\gamma}) V_{t}^{ 1-\gamma  } \big( \int_{X} |\nabla u |_{\psi}^{2} \omega_{\psi}^{n} \big)^{\gamma}+C(\gamma) V_{t}^{1-\gamma}\big( \int_{X} u^{2} \omega_{\psi}^{n} \big)^{\gamma} \\
& \leq C(\gamma, \tilde{C}_{sob}^{\gamma}, \omega_{X}, \theta)  \big( \big( \int_{X} |\nabla u |_{\psi}^{2} \omega_{\psi}^{n} \big)^{\gamma}+\big( \int_{X} u^{2} \omega_{\psi}^{n} \big)^{\gamma} \big). 
\end{align*}
We complete the proof. 
\end{proof}

\begin{rem}
	\label{remark c t epsilon}
	From the proof of Lemma \ref{lemma Sobolev inequality uniform}, we know that
	\begin{align*}
	e^{ c_{t,\epsilon}   } =\frac{ [\omega_{t} ]^{n}  }{  \int_{X} S_{\epsilon}^{ \kappa} e^{  h-\lambda \varphi_{t,\epsilon}   } \omega_{X}^{n}  } \leq C( \omega_{X}, \theta, \lambda, \| \varphi_{t,\epsilon} \|_{ L^{\infty}(X) }  ) \frac{1}{ \int_{X} e^{h} |s|_{D}^{2 \kappa} \omega_{X}^{n}     } .
	\end{align*}
	Hence we make an agreement: 
	 when we say some constants or estimates depend on $	 c_{t,\epsilon}    $, it means they depend on $\omega_{X}, \theta, \lambda, \|\varphi_{t,\epsilon} \|_{ L^{\infty}(X) }$ and a lower bound of $\int_{X} e^{h} |s|_{D}^{2 \kappa} \omega_{X}^{n}$. 
\end{rem}


\section{Singular equations: Gradient estimate}
We will derive the gradient estimate of $\psi$ solving 
(\ref{MA using Koriada smooth}) in this section and the Laplacian estimate in the next section. 
For simplicity, we make an agreement on the dependence of the constants during the estimates:
if a constant or an estimate depends on at least one of $n, \omega_{X}, \theta, \omega_{K},  \lambda$, we may not write down such dependence everytime.

\subsection{Differential inequality}

Let $K$ be a positive constant determined later and $H$ be an auxiliary function on $P$ and $\psi$. We 
set
\begin{align*}
v:=|\nabla \psi|^{2}+1, \quad u:=e^{H} v. 
\end{align*}

	Let  $-C_{4.1}:=\inf_{X}R_{ i \bar{i} j \bar{j} }(\omega)$. Then by Lemma \ref{lemma rough differential inequality}, on $X \setminus E$, 
	\begin{align}
	u^{-1} \Delta_{\psi} u \geq&  \Delta_{\psi} H+2 Re [  (   H_{i} +P_{i}       )     \psi_{\bar{i}}        ] v^{-1} \nonumber \\
	& + [  (-C_{4.1}-1    )  +v^{-1}     ] tr_{ \psi  } \omega+( tr_{\omega} \omega_{\psi}-n   )v^{-1}. \label{SGE1}
	\end{align}
	
We choose
\begin{align*}
 H:=\sigma_{D} \log S_{\epsilon}+(\lambda -\sigma_{E}) \psi, 
\end{align*}
where $\sigma_{D} \in [0,1)$ and $ \sigma_{E}$ are positive constants determined later. 

By direct calculations, we estimate
 the lower bound of $ \Delta_{\psi} H$ and $2Re [  (   H_{i} +P_{i}       )     \psi_{\bar{i}}        ] v^{-1} $ in the next following two propositions. 

\begin{prop}
	\label{prop estimate of laplace H}
	On $X  \setminus E$, 
	\begin{align*}
	\Delta_{\psi} H \geq (   \sigma_{E}-\lambda+C_{4.2} \sigma_{D}   )tr_{\psi} \omega-n(\sigma_{E}-\lambda),
	\end{align*}	
	where 
	\begin{align*}
	C_{4.2}:=\inf_{ (X, \omega)} \sqrt{-1} \partial \bar{\partial } \log S_{\epsilon}. 
	\end{align*}
\end{prop}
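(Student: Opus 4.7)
The statement is a direct computation, so the plan is to split $H$ into its two terms and bound the $\omega_\psi$-Laplacian of each separately, then add.

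First I would write
\[
\Delta_\psi H \;=\; \sigma_D\,\Delta_\psi(\log S_\epsilon) \;+\; (\lambda-\sigma_E)\,\Delta_\psi\psi.
\]
For the second piece, the standard Monge--Amp\`ere identity $\Delta_\psi\psi = \mathrm{tr}_{\omega_\psi}(\omega_\psi - \omega) = n - \mathrm{tr}_\psi\omega$ gives
\[
(\lambda-\sigma_E)\Delta_\psi\psi \;=\; (\lambda-\sigma_E)n \;+\; (\sigma_E-\lambda)\,\mathrm{tr}_\psi\omega,
\]
which contributes the $-n(\sigma_E-\lambda)$ and the $(\sigma_E-\lambda)\mathrm{tr}_\psi\omega$ terms of the claimed lower bound.

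For the first piece, I would use the definition $C_{4.2} = \inf_{(X,\omega)}\sqrt{-1}\partial\bar\partial\log S_\epsilon$ — meaning that as Hermitian $(1,1)$-forms we have $\sqrt{-1}\partial\bar\partial\log S_\epsilon \geq C_{4.2}\,\omega$ on $X\setminus E$ (this is where the bound (\ref{estimate of S eps}) guarantees $C_{4.2}$ is a finite number independent of $\epsilon$, namely $C_{4.2} \geq -C_s$). Taking trace with respect to the positive form $\omega_\psi$ preserves this inequality, so
\[
\Delta_\psi(\log S_\epsilon) \;=\; \mathrm{tr}_{\omega_\psi}\bigl(\sqrt{-1}\partial\bar\partial\log S_\epsilon\bigr) \;\geq\; C_{4.2}\,\mathrm{tr}_\psi\omega.
\]
Since $\sigma_D\in[0,1)$ is nonnegative, multiplying by $\sigma_D$ preserves the direction and yields the $\sigma_D C_{4.2}\,\mathrm{tr}_\psi\omega$ contribution. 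Adding the two bounds gives exactly the stated inequality.

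There is essentially no obstacle here: the only subtle point is ensuring that $C_{4.2}$ is actually finite (i.e.\ that the infimum is attained as a real lower bound, not $-\infty$), which is precisely the content of the left inequality $\sqrt{-1}\partial\bar\partial\log S_\epsilon \geq -C_s\omega$ in (\ref{estimate of S eps}); and that multiplying by $\sigma_D$ requires $\sigma_D\ge 0$, which holds by the choice $\sigma_D\in[0,1)$. The restriction to $X\setminus E$ in the statement is used only to make sense of $\log S_\epsilon$ pointwise (though $S_\epsilon$ itself is smooth on all of $X$ with $S_\epsilon\geq\epsilon$, so in fact the inequality extends to all of $X$; the formulation on $X\setminus E$ is natural for the subsequent use with $\psi=\varphi_{t,\epsilon}-\phi_E$).
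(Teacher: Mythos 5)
Your proof is correct and follows essentially the same route as the paper: split $H$ into its two summands, use $\Delta_\psi\psi = n - \mathrm{tr}_\psi\omega$ for the $(\lambda-\sigma_E)\psi$ part, trace the pointwise bound $\sqrt{-1}\partial\bar\partial\log S_\epsilon \geq C_{4.2}\,\omega$ against $\omega_\psi>0$ for the $\sigma_D\log S_\epsilon$ part, and add. The paper's proof is simply the compressed one-display version of exactly this computation; your extra remarks on why $C_{4.2}$ is finite (via (\ref{estimate of S eps})) and why $\sigma_D\ge 0$ is needed are sound and in the spirit of the text.
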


\begin{proof}
	By the choice of $H$ and (\ref{estimate of S eps}), 
	\begin{align*}
	\Delta_{\psi} H& =		\Delta_{\psi} (\sigma_{D} \log S_{\epsilon}+(\lambda -\sigma_{E}) \psi  ) \\
	& \geq C_{4.2} \sigma_{D} tr_{\psi} \omega+(\lambda-\sigma_{E}) (n-tr_{\psi} \omega) \\
	& =(   \sigma_{E}-\lambda+C_{4.2} \sigma_{D}  )tr_{\psi} \omega-n(\sigma_{E}-\lambda).
	\end{align*}				
\end{proof}

\begin{prop}
	\label{prop estimate of HiPipsi}
		On $X  \setminus E$, 
	\begin{align*}
	2Re [  (   H_{i} +P_{i}       )     \psi_{\bar{i}}        ] v^{-1} 
	\geq  -2\sigma_{E} -A_{1} v^{-\frac{1}{2} }+2 Re[ N_{i}     \psi_{\bar{i}}     ] v^{-1},  
	\end{align*}
	where 
	\begin{align*}
	N:=(  \kappa+\sigma_{D}  )  \log S_{\epsilon}     + h  , \quad 
	A_{1}:=2 \big( |\nabla (  \log \frac{ \omega_X^{n}}{\omega^{n} }  ) |    +  |\nabla (\lambda \phi_E)|\big)   .  
	\end{align*}
\end{prop}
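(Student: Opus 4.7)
The proof is a direct computation: expand $H_i+P_i$ using the definitions, isolate the three groups of terms, and estimate each. I sketch the four steps below.

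First, I would unpack $P$ on $X\setminus E$. From the definition $P=F+h-f$ with $F=-\lambda(\psi+\phi_E)$ and $-f=-\tilde f_{t,\eps}=c_{t,\eps}+\kappa\log S_\eps+\log\frac{\om_X^n}{\om^n}$, the constant $c_{t,\eps}$ differentiates away and I get
\begin{equation*}
P_i=-\lambda\psi_i-\lambda(\phi_E)_i+h_i+\kappa(\log S_\eps)_i+\Bigl(\log\frac{\om_X^n}{\om^n}\Bigr)_i.
\end{equation*}
Combining with $H_i=\sigma_D(\log S_\eps)_i+(\lambda-\sigma_E)\psi_i$ and collecting terms,
\begin{equation*}
H_i+P_i=N_i-\sigma_E\psi_i-\lambda(\phi_E)_i+\Bigl(\log\frac{\om_X^n}{\om^n}\Bigr)_i,
\end{equation*}
where $N=(\kappa+\sigma_D)\log S_\eps+h$ is exactly the function prescribed in the statement.

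Next I would multiply by $\psi_{\bar i}$, take twice the real part, and divide by $v=|\nabla\psi|^2+1$. This gives three pieces: the ``good" pointwise term $2\Re[N_i\psi_{\bar i}]v^{-1}$ to be kept as is; the ``diagonal" term $-2\sigma_E|\nabla\psi|^2 v^{-1}$; and the ``error" term $2\Re\bigl[\bigl(-\lambda(\phi_E)_i+(\log\tfrac{\om_X^n}{\om^n})_i\bigr)\psi_{\bar i}\bigr]v^{-1}$.

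For the diagonal term I simply use $|\nabla\psi|^2\le v$ to obtain the lower bound $-2\sigma_E$. For the error term, Cauchy--Schwarz yields
\begin{equation*}
\bigl|2\Re[(\cdots)_i\psi_{\bar i}]\bigr|\le 2\bigl(|\nabla(\lambda\phi_E)|+|\nabla\log\tfrac{\om_X^n}{\om^n}|\bigr)|\nabla\psi|=A_1|\nabla\psi|,
\end{equation*}
and since $|\nabla\psi|\le v^{1/2}$, dividing by $v$ gives the desired lower bound $-A_1 v^{-1/2}$. Adding the three pieces produces the claimed inequality.

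There is no real obstacle here; the only point requiring care is the bookkeeping in the first step, in particular correctly tracking how the $-\lambda\psi$ contribution in $F$ cancels the $\lambda\psi$ contribution in $H$, leaving precisely the $-\sigma_E\psi_i$ term that yields the clean constant $-2\sigma_E$ after the $|\nabla\psi|^2\le v$ estimate. Everything else is Cauchy--Schwarz.
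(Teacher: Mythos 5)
Your proposal is correct and follows exactly the paper's proof: unpack $P_i$ and $H_i$, cancel the $\lambda\psi_i$ terms so that only $-\sigma_E\psi_i$ survives, isolate $N_i$, and then bound the remaining terms with $|\nabla\psi|^2\le v$ and Cauchy--Schwarz with $|\nabla\psi|\le v^{1/2}$. No differences worth noting.
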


\begin{proof}
	By the definition of $P$, 
	\begin{align*}
	H_{i}+P_{i}& =\sigma_{D} ( \log S_{\epsilon}     )_{i} +(\lambda-\sigma_{E}) \psi_{i}+\big( h+c_{t,\epsilon}+\kappa \log S_{\epsilon}+ \log \frac{ \omega_X^{n}}{\omega^{n} }-\lambda \psi-\lambda \phi_{E} \big)_{i} \\
	& =-\sigma_{E} \psi_{i}+ (  \kappa+\sigma_{D}  ) ( \log S_{\epsilon}     )_{i} + h_{i}+\big( \log \frac{ \omega_X^{n}}{\omega^{n} }  -\lambda \phi_{E} \big)_{i},
	\end{align*}
	which yields that
	\begin{align*}
	&2Re [  (   H_{i} +P_{i}       )     \psi_{\bar{i}}        ] \\
	=& -2\sigma_{E} |\nabla \psi|^{2}+
	2 Re[ \big(   (  \kappa+\sigma_{D}  ) ( \log S_{\epsilon}     )_{i} +h_{i}+\big( \log \frac{ \omega_X^{n}}{\omega^{n} }-\lambda \phi_{E} \big)_{i}  \big)        \psi_{\bar{i}}     ] \\
	\geq & -2 \sigma_{E}|\nabla \psi|^{2}-A_{1} |\nabla \psi|
	+2 Re[ N_{i}     \psi_{\bar{i}}     ] .
	\end{align*}
	
	Therefore, using $v=|\nabla \psi |^{2}+1 \geq |\nabla \psi|^{2}$, 
	\begin{align*}
	&2Re [  (   H_{i} +P_{i}       )     \psi_{\bar{i}}        ] v^{-1} \\
	 \geq & -2\sigma_{E} -A_{1} |\nabla \psi|v^{-1}+2 Re[ N_{i}     \psi_{\bar{i}}     ] v^{-1} \\
	 \geq & -2\sigma_{E} -A_{1} v^{-\frac{1}{2} }+2 Re[ N_{i}     \psi_{\bar{i}}     ] v^{-1}. 
	\end{align*}
\end{proof}

By further estimating the constants and taking $\sigma_{E}$ large, we obtain that the following differential inequality holds on $X$. 
\begin{prop}
	\label{prop differential inequality for gradient singular}
	We take $\sigma_{E}$ such that 
	\begin{align*}
\sigma_{E}-\lambda -1, \quad 	2a_{0}(\sigma_{E}-\lambda) \geq 10,
	\end{align*}
	then $u$ and $Re[N_{i} \psi_{\bar{i}}]e^{H}$ can be extended to be $C^{3}$-smooth functions by taking zero value on $E$. Furthermore, 
	there exists a uniform positive constant $C_{d}$ depending on $C_{4.1}, C_{4.2},  \omega_{K},$ $a_{0}, \sup_{X} |  \nabla(  |s_{E}|_{h_{E}}^{2} ) | , \|\varphi_{t,\epsilon}\|_{L^{\infty}(X)}$
	such that the following differential inequality holds on $X$:
	\begin{align*}
	\Delta_{\psi} u \geq -C_{d} \big( u+1 \big)+2 Re[ N_{i}     \psi_{\bar{i}}     ] e^{H}.
	\end{align*}

\end{prop}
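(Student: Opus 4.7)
The plan is to substitute the pointwise lower bounds from Proposition~\ref{prop estimate of laplace H} and Proposition~\ref{prop estimate of HiPipsi} into \eqref{SGE1}, multiply through by $u=e^H v>0$, absorb the curvature terms by taking $\sigma_E$ sufficiently large, and then verify that the resulting differential inequality extends across the divisor $E$ because of the high vanishing order of $e^H$ there.

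First, on $X\setminus E$ I combine the three inequalities and multiply by $u$. The coefficient of $u\cdot tr_{\psi}\omega$ comes out to $\sigma_E-\lambda+C_{4.2}\sigma_D-C_{4.1}-1+v^{-1}$, and requiring $\sigma_E-\lambda$ to exceed $1+C_{4.1}+|C_{4.2}|$ makes it nonnegative, so that term is discarded. Likewise the contribution $(tr_{\omega}\omega_\psi-n)v^{-1}\cdot u = e^H tr_{\omega}\omega_\psi - n e^H$ splits into a nonnegative piece (discarded) and $-ne^H$. What remains is
\begin{align*}
\Delta_\psi u \;\geq\; -\bigl[n(\sigma_E-\lambda)+2\sigma_E\bigr]u \;-\; A_1 e^H v^{\frac{1}{2}} \;-\; n e^H \;+\; 2\mathrm{Re}[N_i\psi_{\bar i}]e^H.
\end{align*}
To handle the $A_1 e^H v^{1/2}$ term without needing an a priori gradient bound, I use AM--GM in the factorization $A_1 e^H v^{1/2}=(A_1 e^{H/2})\cdot u^{1/2}\leq \tfrac{1}{2}(A_1^2 e^H + u)$. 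From $\phi_E=a_0\log|s_E|^2_{h_E}$ one gets $A_1\leq C(1+|s_E|_{h_E}^{-1})$, and from the uniform $L^\infty$ bound on $\varphi_{t,\epsilon}$ one gets
\begin{align*}
e^H \;=\; S_\epsilon^{\sigma_D}\,e^{(\lambda-\sigma_E)\varphi_{t,\epsilon}}\,|s_E|_{h_E}^{2a_0(\sigma_E-\lambda)} \;\leq\; C\,|s_E|_{h_E}^{2a_0(\sigma_E-\lambda)}.
\end{align*}
With the constraint $2a_0(\sigma_E-\lambda)\geq 10$, the product $A_1^2 e^H \leq C|s_E|_{h_E}^{2a_0(\sigma_E-\lambda)-2}\leq C$ is uniformly bounded, as is $ne^H$; all remaining error terms are dominated by $C_d(u+1)$, which establishes the inequality on $X\setminus E$ with a uniform constant.

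To extend it to all of $X$, I work in local coordinates in which $E=\{z^1=0\}$ and $|s_E|^2_{h_E}=g|z^1|^2$ for a smooth positive $g$. Then $v=|\nabla\psi|^2+1$ admits a decomposition $|z^1|^{-2}A+|z^1|^{-1}B+C$ with $A,B,C$ smooth, $e^H$ vanishes to order $2a_0(\sigma_E-\lambda)\geq 10$ along $E$, and $\psi_{\bar i}$ has at most a simple pole. Consequently $u=e^H v$ vanishes to order at least $2a_0(\sigma_E-\lambda)-2\geq 8$ and $\mathrm{Re}[N_i\psi_{\bar i}]e^H$ vanishes to order at least $2a_0(\sigma_E-\lambda)-1\geq 9$, both comfortably sufficient to extend these functions as $C^3$ functions on $X$ by setting them equal to zero on $E$. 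Because $\omega_\psi=\omega_t+\sqrt{-1}\partial\bar\partial\varphi_{t,\epsilon}$ is a smooth K\"ahler metric on the whole of $X$, $\Delta_\psi$ is a smooth elliptic operator and $\Delta_\psi u$ is continuous on $X$. The inequality, already pointwise valid on the dense open set $X\setminus E$, therefore extends to all of $X$ by continuity.

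The main technical obstacle is the final extension: one must match the vanishing order of $e^H$ against the polar behaviour of $v$ and $\psi_{\bar i}$ near $E$ with enough margin to guarantee not just pointwise boundedness but $C^3$ regularity of both $u$ and $\mathrm{Re}[N_i\psi_{\bar i}]e^H$. The explicit numerical threshold $2a_0(\sigma_E-\lambda)\geq 10$ in the hypothesis is exactly what is needed to accommodate the two lost powers from $v$ and leave enough room for continuity of the third derivatives, so that the Laplacian of the extended $u$ still makes sense along $E$.
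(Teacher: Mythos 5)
Your proof is correct and follows essentially the same route as the paper: substitute Propositions~\ref{prop estimate of laplace H} and~\ref{prop estimate of HiPipsi} into \eqref{SGE1}, absorb the $\tr_{\psi}\omega$ terms by taking $\sigma_E$ large, control the $A_1$ error using the decay of $e^H$ near $E$, and extend the inequality across $E$ by matching the vanishing order of $u$ and $\mathrm{Re}[N_i\psi_{\bar i}]e^H$ against the poles coming from $\nabla\phi_E$. The only cosmetic difference is that you handle $A_1 e^H v^{1/2}$ by AM--GM into $\tfrac{1}{2}(A_1^2 e^H + u)$, whereas the paper bounds $A_1 e^{H/2}$ by a constant and then uses $u^{1/2}\leq u+1$; both yield the same conclusion with the same dependencies.
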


\begin{proof}
	
	Combining (\ref{SGE1}) with Proposition \ref{prop estimate of laplace H} and Proposition \ref{prop estimate of HiPipsi} ,we obtain that on $X \setminus E$, 
	\begin{align*}
		u^{-1} \Delta_{\psi} u \geq& \Delta_{\psi} H+2 Re [  (   H_{i} +P_{i}       )     \psi_{\bar{i}}        ] v^{-1} \nonumber  + [  (-C_{4.1}-1    )  +v^{-1}     ] tr_{ \psi  } \omega+( tr_{\omega} \omega_{\psi}-n   )v^{-1} \\
	\geq &  (   \sigma_{E}-\lambda+C_{4.2} \sigma_{D}  )tr_{\psi} \omega-n(\sigma_{E}-\lambda)-2\sigma_{E} -A_{1} v^{-\frac{1}{2} }+2 Re[ N_{i}     \psi_{\bar{i}}     ] v^{-1} \\
	&+ [  (-C_{4.1}-1    )  +v^{-1}     ] tr_{ \psi  } \omega+( tr_{\omega} \omega_{\psi}-n   )v^{-1} \\
	\geq & ( \sigma_{E}-\lambda-C_{4.1}-1 -|C_{4.2}|      )tr_{ \psi  } \omega-((n+2)\sigma_{E}-n\lambda)-A_{1} v^{-\frac{1}{2}}-nv^{-1}+2 Re[ N_{i}     \psi_{\bar{i}}     ] v^{-1} \\
	\geq & -((n+2)\sigma_{E}-n\lambda)-A_{1} v^{-\frac{1}{2}}-nv^{-1}+2 Re[ N_{i}     \psi_{\bar{i}}     ] v^{-1},
	\end{align*}
	where we choose $\sigma_{E} \geq \lambda+C_{4.1}+1+|C_{4.2}|$. 
	
	Observing that
	\begin{align}
	e^{H} &=e^{  \sigma_{D} \log S_{\epsilon}+( \lambda-\sigma_{E} ) \psi    } \nonumber \\
	&=S_{\epsilon}^{\sigma_{D}} e^{ (\lambda-\sigma_{E})(  \varphi_{t,\epsilon}-a_{0} \log |s_{E}|_{h_{E}}^{2}    )   } \nonumber \\
	& \leq S_{\epsilon}^{\sigma_{D}} e^{ (\sigma_{E}-\lambda) \|\varphi_{t,\epsilon}\|_{L^{\infty}(X)}    } |s_{E}|_{h_{E}}^{  2a_{0}(\sigma_{E} -\lambda) }  \label{SGE2},
	\end{align}
	where we use the fact that $a_{0}>0, \sigma_{E}-\lambda \geq 1$, then it yields that
	\begin{align*}
	A_{1} v^{-\frac{1}{2}} u^{ \frac{1}{2} }& = 2 \big( |\nabla (  \log \frac{ \omega_X^{n}}{\omega^{n} }  ) |    +  |\nabla (\lambda \phi_E)|\big) e^{ \frac{H}{2}  } \\
	& \leq C(  \omega_{K}, a_{0},  \sup_{X} |  \nabla(  |s_{E}|_{h_{E}}^{2} ) | , \|\varphi_{t,\epsilon}\|_{L^{\infty}(X)}  )( 1+|s_{E}|_{h_{E}}^{-2} ) |s_{E}|_{h_{E}}^{a_{0}(  \sigma_{E}-\lambda )  } \\
	& \leq  C(  \omega_{K}, a_{0},  \sup_{X} |  \nabla(  |s_{E}|_{h_{E}}^{2} ) | , \|\varphi_{t,\epsilon}\|_{L^{\infty}(X)}  ):=C_{4.3},
	\end{align*}
	where we choose $\sigma_{E}$ such that $a_{0}( \sigma_{E}-\lambda ) \geq 2$. 

	Therefore, on $X \setminus E$, 
	\begin{align*}
	\Delta_{\psi} u &\geq -((n+2)\sigma_{E}-n\lambda) u-A_{1}v^{-\frac{1}{2}} u-nv^{-1} u+2 Re[ N_{i}     \psi_{\bar{i}}     ] v^{-1}u \nonumber \\
	& \geq -((n+2)\sigma_{E}-n\lambda) u-C_{4.3} u^{\frac{1}{2}} -n e^{H}+2 Re[ N_{i}     \psi_{\bar{i}}     ] e^{H} \\
	& \geq -((n+2)\sigma_{E}-n\lambda) u-C_{4.3} (u+1) -n e^{H}+2 Re[ N_{i}     \psi_{\bar{i}}     ] e^{H}\\
	& \geq -C_{d}(u+1)+ 2 Re[ N_{i}     \psi_{\bar{i}}     ] e^{H}, 
	\end{align*}
	where $C_{d}>0$ depends on $C_{4.1},C_{4.2}$ and  $C_{4.3}$. 

 By the definition of $u$ and $\psi$, 
	\begin{align*}
	u&=S_{\epsilon}^{\sigma_{D}} e^{ (\lambda-\sigma_{E}) \varphi_{t,\epsilon} } |s_{E}|_{h_{E} }^{ 2a_{0} (\sigma_{E}-\lambda)  } ( |\nabla ( \varphi_{t,\epsilon}-a_{0} \log |s_{E}|_{h_{E}}^{2}    ) |^{2}+1     )
	\\
	&=S_{\epsilon}^{\sigma_{D}} e^{ (\lambda-\sigma_{E}) \varphi_{t,\epsilon} } |s_{E}|_{h_{E} }^{ 2a_{0} (\sigma_{E}-\lambda)  } ( |\nabla  \varphi_{t,\epsilon}-a_{0} |s_{E}|_{h_{E}}^{-2}  |\nabla( |s_{E}|_{h_{E}}^{2}  )   |^{2}+1      ), \\
	Re[N_{i} \psi_{\bar{i}}]e^{H}&=\left\langle \nabla N, \nabla \varphi_{t,\epsilon}-a_{0} |s_{E}|_{h_{E}}^{-2} \nabla( |s_{E}|_{h_{E}}^{2}  )   \right\rangle S_{\epsilon}^{\sigma_{D}}e^{  (\lambda-\sigma_{E})\varphi_{t,\epsilon}   }|s_{E}|_{h_{E}}^{2a_{0}(\sigma_{E}-\lambda)}, 
	\end{align*}
	then we can write $u$ as 
	\begin{align*}
	u&=\eta_{1} |s_{E}|_{h_{E} }^{ 2a_{0} (\sigma_{E}-\lambda)  }(  |s_{E}|_{h_{E}}^{-4} +\eta_{2}       ), \\
	Re[N_{i} \psi_{\bar{i}}]e^{H}&=(\eta_{3}+\eta_{4} |s_{E}|_{h_{E}}^{-2}) |s_{E}|_{h_{E}}^{2a_{0}(\sigma_{E}-\lambda  )  },
	\end{align*}
	where $\eta_{1}$, $\eta_{2}$, $\eta_{3}$ and $\eta_{4}$ are smooth functions defined on $X$. We choose $\sigma_{E}$ such that 
	\begin{align*}
	2a_{0}( \sigma_{E}-\lambda  ) \geq 10,
	\end{align*}
	then $u$ and $Re[N_{i} \psi_{\bar{i}}]e^{H}$ can be extended to be
	  $C^{3}$-smooth functions on $X$ by taking zero value on $E$. 
	  We still denote the functions after extension by $u$ and $Re[N_{i} \psi_{\bar{i}}]e^{H}$, respectively. 
	 Since $E$ is a closed set, 
	then for any $x \in E$, we can choose $x_{k} \in X \setminus E$ converging to $x$ and deduce that
	\begin{align*}
	\Delta_{\psi} u(x)& =\lim_{k \rightarrow +\infty} \Delta_{\psi} u(x_{k}) \\
	& \geq \lim_{k \rightarrow +\infty} \big(  -C_{d} (u(x_{k})+1)+2\big(Re[ N_{i} \psi_{\bar{i}}  ]e^{H} \big)(x_{k})    \big) \\
	&=\big(  -C_{d} (u(x)+1)+2\big(Re[ N_{i} \psi_{\bar{i}}  ]e^{H}\big)(x)    \big) . 
	\end{align*}
	Indeed, both sides in the above inequality vanish on $E$. 
	
	We complete the proof. 
\end{proof}

Applying Proposition \ref{prop differential inequality for gradient singular} and integrating by parts, 
we obtain the following integral inequality. 
\begin{prop}
	\label{prop integral inequality for gradient singular}
	For any $p \geq \frac{1}{2}$, 
	\begin{align*}
	\frac{2p}{ (p+\frac{1}{2})^{2}  } \int_{X} |\nabla ( u^{ p+\frac{1}{2} } ) |_{\psi}^{2} \omega_{\psi}^{n} 
	\leq  C_{d} \int_{X} (u^{2p+1}+u^{2p} )\omega_{\psi}^{n} +\mathcal{J}, 
	\end{align*}
	where 
	\begin{align*}
	\mathcal{J}= - 2 \int_{X}  Re[    N_i  \psi_{\bar{i}}     ]e^{H} u^{2p} \omega_{\psi}^{n}.
	\end{align*}
\end{prop}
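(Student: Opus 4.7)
The plan is to start from the pointwise differential inequality of Proposition~\ref{prop differential inequality for gradient singular},
\[
\Delta_{\psi} u \geq -C_{d}(u+1) + 2\,\mathrm{Re}[N_{i}\psi_{\bar i}]\, e^{H},
\]
multiply through by $-u^{2p}$ (which is $\leq 0$ since $u \geq 0$), and integrate over $X$ against $\omega_{\psi}^{n}$. This yields
\[
-\int_{X} u^{2p}\Delta_{\psi} u\,\omega_{\psi}^{n} \;\leq\; C_{d}\int_{X}(u^{2p+1}+u^{2p})\,\omega_{\psi}^{n} + \mathcal{J}.
\]
The step is legitimate because Proposition~\ref{prop differential inequality for gradient singular} already extends both $u$ and $\mathrm{Re}[N_{i}\psi_{\bar i}]e^{H}$ to $C^{3}$-smooth functions on all of $X$ (by zero on $E$), thanks to the choice $2a_{0}(\sigma_{E}-\lambda)\geq 10$; moreover $\omega_{\psi}^{n}$ is a smooth volume form on the compact manifold $X$.

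Next, I would apply integration by parts on the closed manifold $X$ to rewrite the left-hand side:
\[
-\int_{X} u^{2p}\Delta_{\psi} u\,\omega_{\psi}^{n} \;=\; 2p\int_{X} u^{2p-1}|\nabla u|_{\psi}^{2}\,\omega_{\psi}^{n}.
\]
The final identification uses the elementary chain rule $\nabla(u^{p+\frac{1}{2}})=(p+\tfrac12)u^{p-\frac{1}{2}}\nabla u$, which gives
\[
u^{2p-1}|\nabla u|_{\psi}^{2} \;=\; \frac{1}{(p+\tfrac12)^{2}}\bigl|\nabla\bigl(u^{p+\frac{1}{2}}\bigr)\bigr|_{\psi}^{2}.
\]
Combining this with the integrated inequality produces exactly the stated bound, with $\mathcal{J}$ arising directly from integrating the $\mathrm{Re}[N_{i}\psi_{\bar i}]e^{H}$ term against $u^{2p}\omega_{\psi}^{n}$.

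The only genuinely delicate point is the justification of integration by parts near the singularity locus $E$. This is where the careful choice of $\sigma_{E}$ in Proposition~\ref{prop differential inequality for gradient singular} pays off: since $u$ vanishes to sufficiently high order on $E$ (in fact $u^{2p-1}|\nabla u|_{\psi}^{2}$ remains bounded for every $p\geq\tfrac12$) and $\omega_{\psi}^{n}$ is smooth, no boundary contribution appears and the divergence theorem on $X$ applies directly. I expect no further obstacles beyond this bookkeeping; the proof is essentially a one-line calculation once the extension result of Proposition~\ref{prop differential inequality for gradient singular} is invoked.
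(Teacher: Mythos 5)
Your proof is correct and follows essentially the same route as the paper: multiply the pointwise inequality from Proposition~\ref{prop differential inequality for gradient singular} by $-u^{2p}$, integrate against $\omega_\psi^n$ over the closed manifold $X$, and integrate by parts using that $u$ (and $\mathrm{Re}[N_i\psi_{\bar i}]e^H$) have been extended to $C^3$ functions vanishing on $E$, so no boundary terms arise; the chain rule then converts $2p\int u^{2p-1}|\nabla u|_\psi^2$ into the stated form. The paper presents exactly this as a one-line remark before the proposition, so there is nothing to add.
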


\subsection{Estimate of $\mathcal{J}$}

Recalling that $N=(\kappa+\sigma_{D}) \log S_{\epsilon}+h$, 
by (\ref{estimate of S eps}), we obtain that
\begin{align}
|\nabla N| \leq (\kappa+\sigma_{D}) |\nabla \log S_{\epsilon}|+|\nabla h| \leq C_{s} (\kappa+\sigma_{D}) S_{\epsilon}^{-\frac{1}{2} }+|\nabla h|. \label{SGE3}
\end{align}
By the definition of $u$, (\ref{SGE2}) and (\ref{SGE3}), 
\begin{align}
|Re[    N_i  \psi_{\bar{i}}     ]e^{H} | & \leq |\nabla  N ||\nabla \psi| e^{H}  \leq |\nabla N|e^{ \frac{H}{2} } \cdot u^{\frac{1}{2} }  \nonumber \\
& \leq \big( C_{s} (\kappa+\sigma_{D}) S_{\epsilon}^{-\frac{1}{2} }+|\nabla h| \big) ( C( \|\varphi_{t,\epsilon}\|_{ L^{\infty}(X) }   ) S_{\epsilon}^{\sigma_{D} })^{\frac{1}{2} } u^{\frac{1}{2} } \nonumber \\
& \leq C_{4.4} \big( S_{\epsilon}^{ \frac{\sigma_{D}-1}{2} } + |\nabla h| \big)u^{\frac{1}{2}}, \label{SGE4}
\end{align}
where $C_{4.4}$ depends on $C_{s}, \kappa$ and $\|\varphi_{t,\epsilon}\|_{ L^{\infty}(X) }  $. 

We set
\begin{align*}
 \mathcal{J}_{1}:=\int_{X}  |\nabla h| u^{2p+\frac{1}{2}}  \omega_{\psi}^{n}, \quad \mathcal{J}_{2}:=\int_{X} S_{\epsilon}^{ \frac{\sigma_{D}-1}{2} }u^{2p+\frac{1}{2}}  \omega_{\psi}^{n}. 
\end{align*}
To obtain the estimate of $|\mathcal{J}|$, it suffices to control $\mathcal{J}_{1}$ and $\mathcal{J}_{2}$.

\subsubsection{Dealing with $\mathcal{J}_{1}$}	
\begin{prop}
	\label{prop condition on h gradient singular}
	We assume that
	$e^{ \frac{h}{b} } \in W^{1,b} (\omega_{X}) $  for some $b >n$.
	Then there exists a uniform positive $C_{h}$ depending on $ b, \| e^{\frac{h}{b} } \|_{ W^{1,b}( \omega_{X} )  }$, $\|\varphi_{t,\epsilon}\|_{L^{\infty}(X)} ,
	c_{t,\epsilon}$ such that for any $\alpha \in (\frac{b}{b-1}, \frac{n}{n-1})$, 
	\begin{align*}
	\mathcal{J}_{1} \leq C_{h} \big( \int_{X} u^{ (2p+1) \alpha   } \omega_{\psi}^{n} \big)^{\frac{2p+\frac{1}{2}}{ (2p+1) \alpha}    }.
	\end{align*}
\end{prop}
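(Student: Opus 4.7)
The plan is to follow the strategy already employed for the corresponding term $J$ in the proof of \thmref{thm gradient estimate with general F}, adapted to the present singular setting; the singular factor $S_\eps^\kappa$ enters only harmlessly, since $S_\eps\leq 1$.

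First I would apply H\"older's inequality to split $\mathcal{J}_1$ between $|\nabla h|$ and $u^{2p+\frac{1}{2}}$:
\begin{align*}
\mathcal{J}_1 \;\leq\; \Bigl(\int_X |\nabla h|^{q}\,\omega_\psi^n\Bigr)^{1/q}
\Bigl(\int_X u^{(2p+1)\alpha}\,\omega_\psi^n\Bigr)^{\frac{2p+\frac{1}{2}}{(2p+1)\alpha}},
\qquad q:=\frac{(2p+1)\alpha}{(2p+1)\alpha-(2p+\frac{1}{2})}.
\end{align*}
A short arithmetic check, exactly as in the proof of \thmref{thm gradient estimate with general F}, gives $q<\frac{\alpha}{\alpha-1}<b$ for every $p\geq \frac{1}{2}$, using the assumption $\alpha>\frac{b}{b-1}$ (which is non-empty precisely because $b>n$).

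Next I would convert the integral against $\omega_\psi^n$ into an integral against $\omega_X^n$ via the Monge--Amp\`ere equation~\eqref{approximating equation intro}, which reads $\omega_\psi^n=S_\eps^\kappa e^{c_{t,\eps}+h-\lambda\vphi_{t,\eps}}\om_X^n$. Using the uniform $L^\infty$ bound on $\vphi_{t,\eps}$, the convention of Remark~\ref{remark c t epsilon} on $c_{t,\eps}$, and $S_\eps\leq 1$, one obtains
\begin{align*}
\int_X |\nabla h|^{q}\,\omega_\psi^n \;\leq\; C\int_X |\nabla h|^{q}\,e^h\,\om_X^n.
\end{align*}
Then a second application of H\"older with conjugate exponents $\tfrac{b}{q}$ and $\tfrac{b}{b-q}$ yields
\begin{align*}
\int_X |\nabla h|^{q}e^h\,\om_X^n \;\leq\; \Bigl(\int_X |\nabla h|^{b}e^h\,\om_X^n\Bigr)^{q/b}\Bigl(\int_X e^h\,\om_X^n\Bigr)^{1-q/b},
\end{align*}
and the identity $|\nabla h|^b e^h=b^b|\nabla e^{h/b}|^b$ controls the first factor by $b^q\|e^{h/b}\|_{W^{1,b}(\om_X)}^q$. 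For the second factor, the assumption $e^{h/b}\in W^{1,b}(\om_X)$ with $b>n$ combined with the Sobolev embedding on $(X,\om_X)$ (real dimension $2n$) provides a bound on $\int_X e^h\,\om_X^n$ in terms of $\|e^{h/b}\|_{W^{1,b}(\om_X)}$.

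Combining the three steps gives the desired estimate with $C_h$ depending on the listed quantities. The only delicate point is the exponent bookkeeping: one must verify $q<b$ uniformly in $p\geq \frac{1}{2}$ so that the second H\"older step is legitimate, which is exactly why the range $\alpha\in(\tfrac{b}{b-1},\tfrac{n}{n-1})$ is prescribed. Everything else is routine once one transfers the integral to $\om_X^n$; the singular factors $S_\eps^\kappa$, the normalizing constant $c_{t,\eps}$ and the nonlinear term $e^{-\lambda\vphi_{t,\eps}}$ all contribute only uniform constants, so no additional assumption on $\kappa$ is needed at this stage.
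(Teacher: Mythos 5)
Your plan matches the paper's argument step for step: one H\"older split with a $p$-dependent exponent $q<\frac{\alpha}{\alpha-1}<b$, conversion from $\omega_\psi^n$ to $\omega_X^n$ via the Monge--Amp\`ere equation (absorbing $S_\eps^\kappa\le 1$, $e^{-\lambda\varphi_{t,\eps}}$, and $e^{c_{t,\eps}}$ into constants), and a second H\"older with exponents $\frac{b}{q}$, $\frac{b}{b-q}$ followed by the rewriting $|\nabla h|^b e^h=b^b|\nabla e^{h/b}|^b$. The one step you omit is the change of gradient metric: the $|\nabla h|$ in $\mathcal{J}_1$ is taken with respect to $\omega=\tilde\omega_t$ while the $W^{1,b}(\omega_X)$-norm controls $|\nabla e^{h/b}|_{\omega_X}$; the paper inserts $|\nabla h|_\omega\le |\nabla h|_{\omega_X}(\tr_\omega\omega_X)^{1/2}$ and bounds $\tr_\omega\omega_X$ uniformly using $\omega\ge \omega_K$. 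Also note that bounding $\int_X e^h\,\omega_X^n$ does not actually require Sobolev embedding: $\int_X e^h\,\omega_X^n=\|e^{h/b}\|_{L^b(\omega_X)}^b$, which is part of the $W^{1,b}$-norm itself.
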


\begin{proof}
	For any $\alpha \in (\frac{b}{b-1}, \frac{n}{n-1})$, by the H\"older inequality, 
	\begin{align*}
	\mathcal{J}_{1} = \int_{X} |\nabla h| u^{ 2p+\frac{1}{2} } \omega_{\psi}^{n}  \leq \big(   \int_{X} |\nabla h|^{r} \omega_{\psi}^{n} \big)^{ \frac{1}{r} } 
	\big( \int_{X} u^{ (2p+1) \alpha  } \omega_{\psi}^{n} \big)^{ \frac{2p+\frac{1}{2}}{ (2p+1) \alpha  }   }	
	\end{align*}
	where
	\begin{align*}
r=\frac{ (2p+1) \alpha}{  (2p+1) \alpha-2p-\frac{1}{2}     } \leq \frac{\alpha}{\alpha-1} <b, \quad \forall p \geq \frac{1}{2}. 
	\end{align*}
	Recalling that
	\begin{align*}
	\omega_{\psi}^{n} = S_{\epsilon}^{ \kappa} e^{ h-\lambda\vphi_{  t,\epsilon }   +c_{t,\epsilon} } \omega_X^{n}, 
	\end{align*}
	then  
	\begin{align*}
	\int_{X} |\nabla h|^{r} \omega_{\psi}^{n} \leq C(c_{t,\epsilon},  \|\varphi_{t,\epsilon}\|_{L^{\infty}(X)}    ) \int_{X} |\nabla h|^{r} e^{h}  \omega_{X}^{n}.
	\end{align*}
	Since $r <b$, then by the H\"older inequality,
	\begin{align*}
	\int_{X} |\nabla h|^{r} e^{h} \omega_{X}^{n}& \leq \int_{X} \big(  |\nabla h|_{ \omega_{X}  } (tr_{\omega} \omega_{X})^{\frac{1}{2}}  \big)^{r} e^{h} \omega_{X}^{n}
	\\
	& \leq C(\omega_{X},\omega_{K} ) \int_{X}|\nabla h|_{ \omega_{X}  }^{ r } e^{ \frac{r}{b}h   } e^{  (1-\frac{r}{b}) h    } \omega_{X}^{n}\\
	& \leq C(\omega_{X},\omega_{K} ) \big( \int_{X} |\nabla h|_{ \omega_{X}  }^{b} e^{h} \omega_{X}^{n} \big)^{ \frac{r}{b}  } \big(\int_{X} e^{h} \omega_{X}^{n} \big)^{  1-\frac{r}{b}   } \\
	&=C(\omega_{X},\omega_{K} ) b^{r}\big( \int_{X} |\nabla e^{\frac{h}{b}}|_{\omega_{X}}^{b} \omega_{X}^{n} \big)^{ \frac{r}{b}  } \big(\int_{X} e^{h} \omega_{X}^{n} \big)^{  1-\frac{r}{b}   },
	\end{align*}
	which yields that
	\begin{align*}
	\mathcal{J}_{1} \leq C(c_{t,\epsilon},  \|\varphi_{t,\epsilon}\|_{L^{\infty}(X)}   ,b, \| e^{\frac{h}{b} } \|_{ W^{1,b}( \omega_{X} )  } )
	 \big( \int_{X} u^{ (2p+1) \alpha  } \omega_{\psi}^{n} \big)^{ \frac{2p+\frac{1}{2}}{ (2p+1) \alpha  }   }	.
	\end{align*}
\end{proof}

\begin{rem}
	\label{rem condition on h in G2 implies sobolev}
	Assume $ 	e^{ \frac{h}{b} } \in W^{1,b}(\omega_{X})$ for some $b >n$. \\
	By the Sobolev embedding theorem, 
	\begin{itemize}
		\item 	if $b \in (n,2n)$, then
		$$ e^{ \frac{h}{b} } \in L^{q}(\omega_{X}),  \quad  \forall q \in [1, \frac{2n b}{ 2n-b  }  ), $$
		which yields that $e^{h} \in L^{ q }(\omega_{X})$ for any $q \in[1,\frac{2n}{2n-b})$. 
		\item  if $b=2n$, then 
		$$ e^{ \frac{h}{b} } \in L^{q}(\omega_{X}),  \quad  \forall q \geq 1, $$
		which yields that $e^{h} \in L^{ 2 }(\omega_{X})$ by taking $q=4n$. 
		\item if $b >2n$, then there exists $\alpha\in (0,1)$ such that
		$$ e^{ \frac{h}{b} } \in C^{\alpha}(\omega_{X}), $$
		which yields that $e^{h} \in L^{\infty}(X)$. 
	\end{itemize}
	In conclusion, if $ e^{ \frac{h}{b} } \in W^{1,b}(\omega_{X})$ for some $b >n$, then there exists $p_{0}>1$ such that $e^{h} \in L^{ p_{0} }(\omega_{X}  )$. 
\end{rem}

\subsubsection{Dealing with $\mathcal{J}_{2}$}	
Recalling that $\sigma_{D} \in [0,1)$. 
\begin{prop}
	\label{prop sharp condition on kappa}
	(a).  If $ 	e^{ \frac{h}{b} } \in W^{1,b}( \omega_{X})$ for some $b \in (n,2n)$ and the pair $(\kappa, \sigma_{D})$ satisfies
	\begin{align*}
	\kappa >\frac{1-\sigma_{D}}{2} n- \frac{b}{2n}   n_{s}, 
	\end{align*}
	then there exist $\alpha \in (1,\frac{n}{n-1})$ depending on $n, \kappa, \sigma_{D}, n_{s}, b$, and $C_{A}>0$ depending on $\|\varphi_{t,\epsilon}\|_{L^{\infty}(X)}, c_{t,\epsilon}, b, \| e^{ \frac{h}{b} } \|_{ W^{1,b}(\omega_{X}) }, n, \kappa, \sigma_{D}, n_{s}$ such that
	\begin{align*}
	\mathcal{J}_{2}\leq C_{A}  \big( \int_{X} u^{(2p+1)\alpha} \omega_{\psi}^{n} \big)^{  \frac{2p+\frac{1}{2} }{ (2p+1) \alpha  }   }. 
	\end{align*}
	(b). If $ 	e^{ \frac{h}{b} } \in W^{1,b}( \omega_{X})$ for some $b \geq 2n$ and the pair $(\kappa, \sigma_{D})$ satisfies
	\begin{align*}
	\kappa>\frac{1-\sigma_{D}}{2} n-n_{s}, 
	\end{align*}
	then there exist $\alpha \in (1,\frac{n}{n-1})$ depending on $n, \kappa, \sigma_{D}, n_{s}$, and $C_{A}>0$ depending on $\|\varphi_{t,\epsilon}\|_{L^{\infty}(X)}, c_{t,\epsilon}, b, \| e^{ \frac{h}{b} } \|_{ W^{1,b}(\omega_{X}) }, n, \kappa, \sigma_{D}, n_{s}$ such that
	\begin{align*}
	\mathcal{J}_{2}\leq C_{A}  \big( \int_{X} u^{(2p+1)\alpha} \omega_{\psi}^{n} \big)^{  \frac{2p+\frac{1}{2} }{ (2p+1) \alpha  }   }. 
	\end{align*}
\end{prop}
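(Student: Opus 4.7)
The plan is to apply H\"older's inequality twice---first to separate $u^{2p+\frac{1}{2}}$ from the singular weight $S_\epsilon^{(\sigma_D-1)/2}$ against $\omega_\psi^n$, and then to separate $e^h$ from $S_\epsilon$ against $\omega_X^n$---and finally to invoke the integral condition (\ref{integral condition of f}) to handle the remaining purely singular $S_\epsilon$-integral uniformly in $\epsilon$. The structure mirrors the treatment of $\mathcal{J}_1$ in Proposition \ref{prop condition on h gradient singular}; the only new ingredient is that the exponent $(\sigma_D-1)/2$ on $S_\epsilon$ is negative, so uniformity in both $p$ and $\epsilon$ requires a careful choice of H\"older exponents.

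Concretely, for $p \geq \tfrac{1}{2}$ and $\alpha \in (1,\tfrac{n}{n-1})$ to be fixed later, set
\begin{align*}
r := \frac{(2p+1)\alpha}{(2p+1)\alpha - (2p+\tfrac{1}{2})},
\end{align*}
so that the conjugate exponent $r'$ satisfies $(2p+\tfrac{1}{2})r' = (2p+1)\alpha$. A direct check (as in Proposition \ref{prop condition on h gradient singular}) shows $r < \tfrac{\alpha}{\alpha-1}$ for every $p \geq \tfrac{1}{2}$, so H\"older's inequality gives
\begin{align*}
\mathcal{J}_2 \leq \Big( \int_X S_\epsilon^{r(\sigma_D-1)/2} \omega_\psi^n \Big)^{1/r} \Big( \int_X u^{(2p+1)\alpha} \omega_\psi^n \Big)^{(2p+\frac{1}{2})/((2p+1)\alpha)}.
\end{align*}
Substituting the equation (\ref{MA using Koriada smooth}) and using the uniform bounds on $\|\varphi_{t,\epsilon}\|_{L^\infty(X)}$ and on $c_{t,\epsilon}$ (Remark \ref{remark c t epsilon}), the first factor is controlled by a uniform constant times a power of $\int_X S_\epsilon^{\kappa + r(\sigma_D-1)/2} e^h \omega_X^n$. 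A second application of H\"older with exponents $p_0, p_0'$---where $p_0 > 1$ is any exponent with $e^h \in L^{p_0}(\omega_X)$, supplied by Remark \ref{rem condition on h in G2 implies sobolev}---reduces matters to uniform boundedness (in $\epsilon$) of $\int_X S_\epsilon^{p_0'[\kappa + r(\sigma_D-1)/2]} \omega_X^n$, which by (\ref{integral condition of f}) holds as soon as $p_0'[\kappa + r(\sigma_D-1)/2] > -n_s$. Since $(\sigma_D-1)/2 < 0$, the constraint is tightest as $r \to \tfrac{\alpha}{\alpha-1}^-$, so it suffices to secure the limiting inequality
\begin{align*}
\kappa > \frac{\alpha}{\alpha-1} \cdot \frac{1-\sigma_D}{2} - \frac{n_s}{p_0'}.
\end{align*}

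It remains to optimise $\alpha$ and $p_0$ within their admissible ranges. In case (a), Remark \ref{rem condition on h in G2 implies sobolev} permits any $p_0 < 2n/(2n-b)$, so $p_0'$ can be chosen arbitrarily close to $2n/b$ from above; letting $\alpha \to \tfrac{n}{n-1}^-$ pushes $\alpha/(\alpha-1)$ down to $n^+$, so the strict hypothesis $\kappa > \tfrac{1-\sigma_D}{2}n - \tfrac{b}{2n}n_s$ furnishes an admissible pair $(\alpha,p_0)$. In case (b), where $b \geq 2n$, Remark \ref{rem condition on h in G2 implies sobolev} allows $p_0$ arbitrarily large, so $p_0' \to 1^+$ and the required bound becomes $\kappa > \tfrac{1-\sigma_D}{2}n - n_s$, once again matching the hypothesis. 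The main technical point is to ensure every constant is independent of $p$: the pair $(\alpha,p_0)$ must be fixed once and for all before the Moser iteration begins, and one must verify, using the bounded total mass of $\omega_\psi^n$, that the $L^{r(p)}$-norm of $S_\epsilon^{(\sigma_D-1)/2}$ is dominated by a single uniform constant determined only by $\kappa$, $\sigma_D$, $n_s$, $\alpha$, and $p_0$.
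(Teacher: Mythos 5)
Your proof is correct and follows essentially the same route as the paper: a first H\"older inequality in $\omega_\psi^n$ with exponent pair $(r,\,\cdot\,)$ to separate $u^{(2p+1)\alpha}$, then substitution of the Monge--Amp\`ere equation to pass to $\omega_X^n$, a second H\"older in $\omega_X^n$ with exponents $(p_0,p_0')$ to isolate $e^h$ from $S_\epsilon$, and finally the integrability condition \eqref{integral condition of f} together with $r<\frac{\alpha}{\alpha-1}$ and the optimisation of $(\alpha,p_0)$. The only cosmetic difference is that you unify the paper's Cases (b) and (c) (namely $b=2n$ and $b>2n$) by observing that in both regimes one may take $p_0$ arbitrarily large so $p_0'\to 1^+$, whereas the paper treats $b>2n$ separately using the $C^\alpha$ embedding to pull $e^h$ out of the integral directly; both yield the same constraint on $\kappa$.
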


\begin{proof}
	For some $\alpha \in (1,\frac{n}{n-1})$ determined later, by the H\"older inequality,
	\begin{align*}
	\mathcal{J}_{2}=\int_{X} S_{\epsilon}^{ \frac{\sigma_{D}-1}{2} }u^{2p+\frac{1}{2}}  \omega_{\psi}^{n} \leq \big( \int_{X} S_{\epsilon}^{ r( \frac{\sigma_{D}-1}{2}    )   }  \omega_{\psi}^{n} \big)^{\frac{1}{r}} \big( \int_{X} u^{(2p+1)\alpha} \omega_{\psi}^{n} \big)^{  \frac{2p+\frac{1}{2} }{ (2p+1) \alpha  }   },
	\end{align*}
	where
	\begin{align*}
	r=\frac{ (2p+1) \alpha}{  (2p+1) \alpha-2p-\frac{1}{2}     } \in [ \frac{ 2 \alpha }{  2 \alpha-\frac{3}{2}  }, \frac{\alpha}{\alpha-1} ), \quad \forall p \geq \frac{1}{2}. 
	\end{align*}
	Since 
	\begin{align*}
	\omega_{\psi}^{n} = S_{\epsilon}^{ \kappa} e^{ h-\lambda\vphi_{  t,\epsilon }   +c_{t,\epsilon} } \omega_X^{n} \leq C(   \|\varphi_{t,\epsilon} \|_{L^{\infty}(X)}, c_{t,\epsilon}    )S_{\epsilon}^{ \kappa} e^{h}\omega_X^{n}. 
	\end{align*}
	 it sufficies to control the following term:
	\begin{align*}
	\int_{X} S_{\epsilon}^{ r( \frac{\sigma_{D}-1}{2}    )+\kappa   } e^{h} \omega_{X}^{n}. 
	\end{align*}

	\textbf{Case a.} If $ 	e^{ \frac{h}{b} } \in W^{1,b}( \omega_{X})$ for some $b \in (n,2n)$ and the pair $(\kappa, \sigma_{D})$ satisfies
	\begin{align*}
	\kappa >\frac{1-\sigma_{D}}{2} n- \frac{b}{2n}  n_{s}, 
	\end{align*}
	then there exists small $\epsilon>0$ depending on $n, \kappa, \sigma_{D}, n_{s}$ and $b$ such that
	\begin{align*}
	\kappa+n_{s}>\frac{1-\sigma_{D}}{2} ( n+2\epsilon) +(1-\frac{b }{2n} +\epsilon)n_{s}. 
	\end{align*}
	By Sobolev embedding theorem, for any $q \in[1,\frac{2n}{2n-b})$, $e^{h} \in L^{q}(\omega_{X})$, then
	by the H\"older inequality,
	\begin{align}
	\label{SGE5}
	\int_{X} S_{\epsilon}^{ r( \frac{\sigma_{D}-1}{2}    )+\kappa   } e^{h} \omega_{X}^{n} \leq \big( \int_{X} S_{\epsilon}^{ \big( r( \frac{\sigma_{D}-1}{2}    )+\kappa \big) \frac{q}{q-1}  } \omega_{X}^{n} \big)^{1-\frac{1}{q}   } \big( \int_{X} e^{qh} \omega_{X}^{n} \big)^{ \frac{1}{q} }. 
	\end{align}
	We choose $\alpha \in ( \frac{n+\epsilon}{n+\epsilon-1}, \frac{n}{n-1}  )$ and $q \in ( \frac{2n}{2n-b+2n \epsilon}, \frac{2n}{2n-b}   )$, 
	then 
	\begin{align*} 
	r<n+\epsilon, \quad \frac{1}{q}<\frac{2n-b}{2n}+\epsilon, \quad \frac{q}{q-1}>\frac{2n}{b},
	\end{align*}
	which yields that
	\begin{align*}
&\big(	 r( \frac{\sigma_{D}-1}{2}    )+\kappa \big) \frac{q}{q-1} +n_{s}\\
=&\big(	 r( \frac{\sigma_{D}-1}{2}    )+\kappa +\frac{q-1}{q}n_{s}   \big) \frac{q}{q-1} \\
 \geq &\big(	 (n+\epsilon)( \frac{\sigma_{D}-1}{2}    )+\kappa +n_{s}-\frac{n_{s}}{q} \big) \frac{q}{q-1} \\
\geq & \big(	 (n+\epsilon)( \frac{\sigma_{D}-1}{2}    )+\frac{1-\sigma_{D}}{2} ( n+2\epsilon) +(\frac{2n-b }{2n} +\epsilon)n_{s}-\frac{n_{s}}{q}   \big) \frac{q}{q-1} \\
\geq&  \frac{n \epsilon}{b}(1-\sigma_{D})>0. 
	\end{align*}
	Therefore, 
	\begin{align*}
		\mathcal{J}_{2} \leq C_{A}  \big( \int_{X} u^{(2p+1)\alpha} \omega_{\psi}^{n} \big)^{  \frac{2p+\frac{1}{2} }{ (2p+1) \alpha  }   },
	\end{align*}
	where $C_{A}$ depends on $\|\varphi_{t,\epsilon}\|_{L^{\infty}(X)}, c_{t,\epsilon}, b, \| e^{ \frac{h}{b} } \|_{ W^{1,b}(\omega_{X}) }, n, \kappa, \sigma_{D}, n_{s}$. 
	
	\textbf{Case b.} $ 	e^{ \frac{h}{2n} } \in W^{1,2n}( \omega_{X})$  and the pair $(\kappa, \sigma_{D})$ satisfies
	\begin{align*}
	\kappa>\frac{1-\sigma_{D}}{2} n-n_{s}, 
	\end{align*}
	then there exists small $\epsilon>0$ depending on $n, \kappa, \sigma_{D}, n_{s}$  such that
	\begin{align*}
	\kappa+n_{s}>\frac{1-\sigma_{D}}{2} ( n+2\epsilon) . 
	\end{align*}
	By Sobolev embedding theorem, for any $q \geq 1$, $e^{h} \in L^{q}(\omega_{X})$. 
	We choose $\alpha \in ( \frac{n+\epsilon}{n+\epsilon-1}, \frac{n}{n-1}  )$ and $q \in ( \frac{4n_{s}}{  (1-\sigma_{D})\epsilon}, \frac{8n_{s}}{  (1-\sigma_{D})\epsilon}  )$, then (\ref{SGE5}) still holds and 
	\begin{align*}
	&\big(	 r( \frac{\sigma_{D}-1}{2}    )+\kappa \big) \frac{q}{q-1} +n_{s}\\
	\geq &\big(	 (n+\epsilon)( \frac{\sigma_{D}-1}{2}    )+\kappa +n_{s}-\frac{n_{s}}{q} \big) \frac{q}{q-1} \\
	\geq & \big(	 (n+\epsilon)( \frac{\sigma_{D}-1}{2}    )+\frac{1-\sigma_{D}}{2} ( n+2\epsilon) -\frac{n_{s}}{q}  \big) \frac{q}{q-1} \\
	\geq&  \frac{2n_{s} (1-\sigma_{D}) \epsilon }{ 8n_{s}-(1-\sigma_{D}) \epsilon   }>0. 
	\end{align*}
		Therefore, 
	\begin{align*}
	\mathcal{J}_{2} \leq C_{A}  \big( \int_{X} u^{(2p+1)\alpha} \omega_{\psi}^{n} \big)^{  \frac{2p+\frac{1}{2} }{ (2p+1) \alpha  }   },
	\end{align*}
	where $C_{A}$ depends on $\|\varphi_{t,\epsilon}\|_{L^{\infty}(X)}, c_{t,\epsilon},  \| e^{ \frac{h}{2n} } \|_{ W^{1,2n}(\omega_{X}) }, n, \kappa, \sigma_{D}, n_{s}$. 
	
	\textbf{Case c.} $ 	e^{ \frac{h}{b} } \in W^{1,b}( \omega_{X})$  for some $b>2n$ and the pair $(\kappa, \sigma_{D})$ satisfies
	\begin{align*}
	\kappa>\frac{1-\sigma_{D}}{2} n-n_{s}, 
	\end{align*}
	then there exists small $\epsilon>0$ depending on $n, \kappa, \sigma_{D}, n_{s}$  such that
	\begin{align*}
	\kappa+n_{s}>\frac{1-\sigma_{D}}{2} ( n+2\epsilon) . 
	\end{align*}
		By Sobolev embedding theorem, $e^{ \frac{h}{b} } \in C^{\alpha}(X)$ for some $\alpha \in (0,1)$, which yields that
		\begin{align*}
		 	\int_{X} S_{\epsilon}^{ r( \frac{\sigma_{D}-1}{2}    )+\kappa   } e^{h} \omega_{X}^{n} \leq C( \|e^{\frac{h}{b} }\|_{C^{0}(X)}, b ) \int_{X} S_{\epsilon}^{ r( \frac{\sigma_{D}-1}{2}    )+\kappa   } \omega_{X}^{n}. 
	\end{align*}
	By choosing $\alpha \in (\frac{n+ \epsilon}{ n+ \epsilon-1 },\frac{n}{n-1})$, 
	then $r<n+\epsilon$, which yields that
	\begin{align*}
	r ( \frac{\sigma_{D}-1}{2}    ) +\kappa +n_{s} >( n+\epsilon )  ( \frac{\sigma_{D}-1}{2}    ) +\frac{1-\sigma_{D}}{2} ( n+2\epsilon)=\frac{1-\sigma_{D}}{2} \epsilon>0. 
	\end{align*}
		Therefore, 
	\begin{align*}
	\mathcal{J}_{2} \leq C_{A}  \big( \int_{X} u^{(2p+1)\alpha} \omega_{\psi}^{n} \big)^{  \frac{2p+\frac{1}{2} }{ (2p+1) \alpha  }   },
	\end{align*}
	where $C_{A}$ depends on $\|\varphi_{t,\epsilon}\|_{L^{\infty}(X)}, c_{t,\epsilon}, b, \| e^{ \frac{h}{b} } \|_{ W^{1,b}(\omega_{X}) }, n, \kappa, \sigma_{D}, n_{s}$. 
	
	We complete the proof by combining the above three cases. 
\end{proof}

\subsection{Gradient estimate of $\psi$}

\begin{thm}
	\label{thm gradient estimate singular}
	We assume that one of the following conditions holds:\\
	(1). $e^{ \frac{h}{b} } \in W^{1,b}(\omega_{X}) $ for some $b \in (n,2n)$, the pair $(\kappa, \sigma_{D})$ satisfies
	\begin{align*}
	\kappa >\frac{1-\sigma_{D}}{2} n- \frac{b}{2n}   n_{s};
	\end{align*}
	(2). $e^{ \frac{h}{b} } \in W^{1,b}(\omega_{X}) $ for some $b \geq 2n$, the pair $(\kappa, \sigma_{D})$ satisfies
	\begin{align*}
	\kappa >\frac{1-\sigma_{D}}{2} n-    n_{s}.
	\end{align*}
	Then there exists a positive constant $\mathscr{C}_{\kappa,\sigma_{D}}$ depending on
	$n, \omega_{X}, \theta, \omega_{K}, C_{d}, C_{h}, C_{A}, $
	$\|\varphi_{t,\epsilon}\|_{L^{\infty}(X)}, \kappa, \sigma_{D}$ and a lower bound of $\|e^{h} |s|_{D}^{2 \kappa} \|_{L^{1}(\omega_{X})}$
	such that the following gradient estimate holds:
	\begin{align}
	\label{singular gradient estimate 1}
	S_{\epsilon}^{ \frac{\sigma_{D}}{2}  }  |\nabla \psi|  \leq  \mathscr{C}_{\kappa,\sigma_{D}} |s_{E}|_{h_{E}}^{ -a_{0}( \sigma_{E}-\lambda )  }, 
	\end{align}
	where $\sigma_{E}$ satisfies:
	\begin{align*}
	\sigma_{E} \geq \lambda +C_{4.1}+1+|C_{4.2}|, \quad a_{0}( \sigma_{E}-\lambda ) \geq 5. 
	\end{align*}
\end{thm}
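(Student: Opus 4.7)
My plan is to mimic the Moser iteration in the proof of Theorem \ref{thm gradient estimate with general F}, now fed by the three preparatory estimates already assembled in this section: the integral differential inequality from Proposition \ref{prop integral inequality for gradient singular}, the bound on $\mathcal{J}_1$ from Proposition \ref{prop condition on h gradient singular}, and the bound on $\mathcal{J}_2$ from Proposition \ref{prop sharp condition on kappa}. The role of $\sigma_D$ has been precisely to absorb the $S_\epsilon^{-1/2}$ singularity of $\nabla\log S_\epsilon$ hidden inside $N$, while $\sigma_E$ is chosen (per Proposition \ref{prop differential inequality for gradient singular}) large enough both to dominate the curvature constants $C_{4.1},|C_{4.2}|$ and to force $u$ together with $Re[N_i\psi_{\bar i}]e^H$ to extend $C^3$-smoothly across $E$, so that integration by parts is unobstructed.

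\textbf{Step 1 (combine and apply Sobolev).} Combining Proposition \ref{prop integral inequality for gradient singular} with the pointwise bound (\ref{SGE4}) and Propositions \ref{prop condition on h gradient singular}--\ref{prop sharp condition on kappa}, one picks $\alpha\in(\frac{b}{b-1},\frac{n}{n-1})$ admissible for both, and obtains for all $p\geq 1/2$
\begin{align*}
\frac{2p}{(p+\tfrac12)^2}\int_X |\nabla(u^{p+\tfrac12})|_\psi^2\,\omega_\psi^n \leq C_\sharp \int_X (u^{2p+1}+u^{2p})\,\omega_\psi^n + C_\sharp\Bigl(\int_X u^{(2p+1)\alpha}\omega_\psi^n\Bigr)^{\frac{2p+\frac12}{(2p+1)\alpha}},
\end{align*}
where $C_\sharp$ aggregates $C_d$, $C_h$, $C_A$. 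Choosing $\gamma\in(\alpha,\frac{n}{n-1})$ and applying Lemma \ref{lemma Sobolev inequality uniform} to $u^{p+1/2}$, then cleaning up the low-order terms by H\"older and Young in the same way as (\ref{GG8})--(\ref{GG9}), produces the reverse-H\"older inequality
\begin{align*}
\Bigl(\int_X u^{(2p+1)\gamma}\omega_\psi^n\Bigr)^{1/\gamma} \leq C_\flat p \Bigl(\bigl(\int_X u^{(2p+1)\alpha}\omega_\psi^n\bigr)^{1/\alpha}+1\Bigr).
\end{align*}

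\textbf{Step 2 (Moser iteration and unwinding).} The interpolation/Young argument leading to (\ref{GG10}) transcribes verbatim, yielding the recursive estimate
\begin{align*}
\|u\|_{L^{(2p+1)\gamma}(\omega_\psi)} \leq C(\Lambda)C_\flat\bigl(\|u\|_{L^{p+\frac12}(\omega_\psi)}+1\bigr),
\end{align*}
and the threshold-exponent dichotomy $p_0=\inf\{p\geq 1/2:\|u\|_{L^{(2p+1)\alpha}(\omega_\psi)}>1\}$ closes the iteration, producing $\|u\|_{L^\infty(X)}\leq C$. Unwinding
\begin{align*}
u=S_\epsilon^{\sigma_D}\,e^{(\lambda-\sigma_E)\varphi_{t,\epsilon}}\,|s_E|_{h_E}^{2a_0(\sigma_E-\lambda)}\bigl(|\nabla\psi|^2+1\bigr),
\end{align*}
using $|s_E|_{h_E}\leq 1$, $S_\epsilon\leq 1$, and $\|\varphi_{t,\epsilon}\|_{L^\infty}$-boundedness, then taking square roots translates $\|u\|_{L^\infty}\leq C$ directly into (\ref{singular gradient estimate 1}).

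\textbf{Main obstacle.} The delicate part is guaranteeing that every constant in the iteration is uniform in $(t,\epsilon)$. The volume $[\omega_\psi]^n=[\omega_t]^n$ is uniformly bounded above and below so the usual normalization absorbs into the constants; the Sobolev constant $C_{sob}^\gamma$ in Lemma \ref{lemma Sobolev inequality uniform} requires a uniform lower bound on $\|e^h|s|_D^{2\kappa}\|_{L^1(\omega_X)}$, which is exactly the hypothesis in the theorem and enters through the normalization $c_{t,\epsilon}$ of Remark \ref{remark c t epsilon}. The exponent constraints (1)--(2) on $\kappa$ are tight in the sense that Proposition \ref{prop sharp condition on kappa} needs precisely these ranges for some $\alpha\in(\frac{b}{b-1},\frac{n}{n-1})$ to satisfy simultaneously the H\"older step for $\mathcal{J}_1$ and the integrability of $S_\epsilon^{r(\sigma_D-1)/2+\kappa}e^h$; relaxing either constraint forces $\alpha$ out of this window and breaks the closing of the iteration. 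Beyond this bookkeeping, the remainder of the proof is a direct transcription of the argument for Theorem \ref{thm gradient estimate with general F}.
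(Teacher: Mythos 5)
Your proposal follows the paper's own proof essentially verbatim: it assembles Lemma~\ref{lemma Sobolev inequality uniform}, Proposition~\ref{prop integral inequality for gradient singular}, Proposition~\ref{prop condition on h gradient singular} and Proposition~\ref{prop sharp condition on kappa} into the iteration inequality, runs the same Moser argument as Step~2 of Theorem~\ref{thm gradient estimate with general F}, and unwinds $u=e^H v$ to obtain \eqref{singular gradient estimate 1}. The bookkeeping of the admissible $\alpha$, the role of the lower bound on $\|e^h|s|_D^{2\kappa}\|_{L^1(\omega_X)}$ via $c_{t,\epsilon}$, and the choice of $\sigma_E$ all match the paper, so the argument is correct and not a genuinely different route.
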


\begin{proof}
  Since $e^{ \frac{h}{b} } \in W^{1,b}(\omega_{X}) $ for some $b>n$, then by Remark \ref{rem condition on h in G2 implies sobolev}, there exists $p_{0}>1$ such that $e^{h} \in L^{ p_{0} } (\omega_{X} )$. 
	Applying Lemma \ref{lemma Sobolev inequality uniform} and Proposition \ref{prop integral inequality for gradient singular}, for some $\gamma \in ( 1,\frac{n}{n-1} )$ determined later, 
	\begin{align*}
	\big( \int_{X} u^{(2p+1)\gamma} \omega_{\psi}^{n} \big)^{\frac{1}{\gamma} } &\leq C_{sob}^{\gamma} \int_{X}
	\big( u^{2p+1}+|\nabla u^{p+\frac{1}{2}   }|_{\psi }^{2} \big) \omega_{\psi}^{n} \\
	& \leq C_{sob}^{\gamma} \int_{X}
	 u^{2p+1}  \omega_{\psi}^{n} +C_{sob}^{\gamma} C_{d} (2p) \int_{X} ( u^{2p+1}+u^{2p} )\omega_{\psi}^{n} +C_{sob} (2p) \mathcal{J} \\
	 & \leq C( C_{sob}^{\gamma}, C_{d}  )p \big( \int_{X} (u^{2p+1}+u^{2p}) \omega_{\psi}^{n} +\mathcal{J}  \big). 
	\end{align*}
	By our assumptions, we can apply Proposition \ref{prop condition on h gradient singular} and Proposition \ref{prop sharp condition on kappa} and obtain that
	\begin{align*}
		\big( \int_{X} u^{(2p+1)\gamma} \omega_{\psi}^{n} \big)^{\frac{1}{\gamma} } & \leq C( C_{sob}^{\gamma}, C_{d}  )p \big( \int_{X} (u^{2p+1}+u^{2p}) \omega_{\psi}^{n} +(C_{h}+C_{A}) \big( \int_{X} u^{(2p+1)\alpha} \omega_{\psi}^{n} \big)^{  \frac{2p+\frac{1}{2} }{ (2p+1) \alpha  }   } \big),
	\end{align*}
	where $\alpha \in (1,\frac{n}{n-1})$ is chosen as in the proof of Proposition \ref{prop sharp condition on kappa}. 
	
	Now we choose $\gamma  \in (\alpha,\frac{n}{n-1})$ and obtain
	\begin{align*}
	\big( \int_{X} u^{(2p+1)\gamma} \omega_{\psi}^{n} \big)^{\frac{1}{\gamma} }  \leq C(C_{sob}^{\gamma}, C_{d} ,C_{h}, C_{A}) p \big(  \int_{X} (u^{2p+1}+u^{2p}) \omega_{\psi}^{n}+ \big( \int_{X} u^{(2p+1)\alpha} \omega_{\psi}^{n} \big)^{  \frac{2p+\frac{1}{2} }{ (2p+1) \alpha  }   } \big).
	\end{align*}
	
	By the H\"older inequality and Young's inequality, 
	\begin{align*}
	\int_{X} (u^{2p+1}+u^{2p}) \omega_{\psi}^{n} &\leq C(\omega_{X}, \theta) \big(  \big( \int_{X} u^{(2p+1) \alpha} \omega_{\psi}^{n} \big)^{  \frac{1}{ \alpha}  }+  \big( \int_{X} u^{(2p+1) \alpha} \omega_{\psi}^{n} \big)^{  \frac{2p}{(2p+1) \alpha}  } \big) , \\
	& \leq C(\omega_{X}, \theta) \big(  \big( \int_{X} u^{(2p+1) \alpha} \omega_{\psi}^{n} \big)^{  \frac{1}{ \alpha}  }+ 1 \big), 
	\\
	\big( \int_{X} u^{(2p+1)\alpha} \omega_{\psi}^{n} \big)^{  \frac{2p+\frac{1}{2}}{  (2p+1)\alpha} } &\leq \big( \int_{X} u^{(2p+1) \alpha} \omega_{\psi}^{n} \big)^{  \frac{1}{ \alpha} } +1.
	\end{align*}
	Therefore, we obtain the following iteration inequality:
	\begin{align*}
	\big( \int_{X} u^{(2p+1)\gamma} \omega_{\psi}^{n} \big)^{\frac{1}{\gamma} }  \leq C(C_{sob}^{\gamma}, C_{d} ,C_{h}, C_{A} ,\omega_{X},\theta) p \big(  \big( \int_{X} u^{(2p+1) \alpha} \omega_{\psi}^{n} \big)^{  \frac{1}{ \alpha}  }+ 1 \big). 
	\end{align*}
	Apply the same argument as \textbf{Step 2} in the proof of Theorem \ref{thm gradient estimate with general F} (see (\ref{GG9}) and (\ref{GG10}) especially), we conclude that
	\begin{align*}
	\|u\|_{ L^{\infty}(X) } \leq C(C_{sob}^{\gamma}, C_{d} , C_{h}, C_{A} ,\omega_{X},\theta). 
	\end{align*}
	Therefore, 
	\begin{align*}
	S_{\epsilon}^{ \frac{\sigma_{D}}{2}  }  |\nabla \psi| & \leq  e^{ \frac{ \sigma_{E}-\lambda}{2} (  \varphi_{t,\epsilon}-a_{0} \log |s_{E}|_{h_{E}}^{2} )     } \big(\|u\|_{ L^{\infty}(X) }\big)^{\frac{1}{2}} \\
	& \leq  C(C_{sob}^{\gamma}, C_{d} ,C_{h}, C_{A} ,\omega_{X},\theta,\|\varphi_{t,\epsilon}\|_{L^{\infty}(X)} ) |s_{E}|_{h_{E}}^{ -a_{0}( \sigma_{E}-\lambda )  }. 
	\end{align*}
	We complete the proof. 
\end{proof}

Theorem \ref{thm gradient estimate singular intro} follows from  Theorem \ref{thm gradient estimate singular} by taking $\sigma_{D}=0$. 

\begin{que}
	\label{que no angle constraints}
	We assume that $e^{ \frac{h}{b} } \in W^{1,b}(\omega_{X}) $ for some $b>n$ (or replacing some weaker assumptions on $h$). Does the estimate $(\ref{singular gradient estimate intro})$ holds for all $\kappa > 0$?
	\end{que}


\section{Singular equations: Laplacian estimate}
In this section, we aim to prove Theorem \ref{thm estimate under laplace h bound intro} and Theorem \ref{thm 2nd Laplacian estimate intro}. 

\subsection{Differential inequality}

We set 
\begin{align*}
w:=e^{-K \psi} tr_{\omega} \omega_{\psi},
\end{align*}
where $K>0$ is determined later.

\begin{lemma}
	\label{lemma differential inequality for laplacian estimate}
	By choosing $K \geq C_{4.1}+1+|\lambda|+\frac{5}{a_{0}}$, on $X$, $w$ can be extended to be a $C^{3}$-smooth function on $X$ by taking zero value on $E$. Furthermore, 
	there exists a positive constant $C_{5.1}$ depending on $n,\kappa, C_{s}, \sup_{X} |\Delta\big( \log \frac{\omega_{X}^{n}}{  \omega^{n}    }\big)|,\lambda, \theta, \omega_{K},$ $ \|\varphi_{t,\epsilon} \|_{L^{\infty}(X)}  $ and $\lambda$ such that
	the following differential inequality holds:
	\begin{align}
	\label{LE1}
	\Delta_{\psi} w  \geq w tr_{\psi} w+e^{-K \psi} \Delta h -C_{5.1}(w+1   ). 
	\end{align}
\end{lemma}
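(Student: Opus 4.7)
\medskip

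\noindent\textbf{Proof proposal.} My plan is to derive the inequality first on the open set $X\setminus E$ via the classical Aubin--Yau inequality applied to $\log\tr_\omega\omega_\psi$, and only afterwards argue that both sides extend $C^3$-smoothly across $E$, so that the pointwise inequality propagates to all of $X$. On $X\setminus E$, Yau's computation for Laplacian bounds gives
$$\Delta_\psi\log\tr_\omega\omega_\psi\;\geq\;\frac{\Delta P}{\tr_\omega\omega_\psi}-C_{4.1}\,\tr_\psi\omega,$$
where the curvature lower bound enters through $-C_{4.1}=\inf_X R_{i\bar ij\bar j}(\omega)$. Writing $\log w=-K\psi+\log\tr_\omega\omega_\psi$ and using $\Delta_\psi\psi=n-\tr_\psi\omega$ together with $\Delta_\psi w=w\Delta_\psi\log w+w|\nabla\log w|_\psi^{2}\geq w\Delta_\psi\log w$, I obtain
$$\Delta_\psi w\;\geq\;(K-C_{4.1})\,w\,\tr_\psi\omega-Kn\,w+e^{-K\psi}\,\Delta P.$$

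The next step is to decompose $\Delta P=\Delta h+\Delta(-f)+\Delta F_{t,\epsilon}$ and show everything but $\Delta h$ can be absorbed. From the definition $-f=c_{t,\epsilon}+\kappa\log S_\epsilon+\log(\omega_X^{n}/\omega^{n})$, the bound $\sqrt{-1}\partial\bar\partial\log S_\epsilon\geq -C_s\omega$ in \eqref{estimate of S eps} and the hypothesis that $\sup_X|\Delta\log(\omega_X^{n}/\omega^{n})|$ is finite yield $\Delta(-f)\geq -C$ for a constant depending on the data listed in the lemma. Since $F_{t,\epsilon}=-\lambda\varphi_{t,\epsilon}=-\lambda(\psi+\phi_E)$ and $\sqrt{-1}\partial\bar\partial\phi_E=\omega_K-\theta$ is bounded, one has $\Delta F_{t,\epsilon}=-\lambda(\tr_\omega\omega_\psi-n)-\lambda\tr_\omega(\omega_K-\theta)$. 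Multiplying by $e^{-K\psi}$ turns the term $-\lambda(\tr_\omega\omega_\psi-n)\,e^{-K\psi}$ into $-\lambda w+O(e^{-K\psi})$, which at worst costs $|\lambda|w$. Plugging these bounds back in,
$$\Delta_\psi w\;\geq\;(K-C_{4.1}-|\lambda|)\,w\,\tr_\psi\omega-C\,w-C\,e^{-K\psi}+e^{-K\psi}\Delta h.$$
The condition $K\geq C_{4.1}+1+|\lambda|+5/a_0$ ensures the coefficient of $w\,\tr_\psi\omega$ is at least $1$, and since $\psi$ is bounded below by $-\|\varphi_{t,\epsilon}\|_{L^\infty}$ away from $E$, the term $e^{-K\psi}$ is uniformly bounded and can be collapsed into $C_{5.1}(w+1)$.

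Finally I need to check the extension across $E$. Because $\omega_\psi=\omega_{\varphi_{t,\epsilon}}$ is a genuine smooth Kähler form on all of $X$, the function $\tr_\omega\omega_\psi$ is smooth and bounded globally, while $e^{-K\psi}=e^{-K\varphi_{t,\epsilon}}|s_E|_{h_E}^{2Ka_0}$ vanishes on $E$ to order $|s_E|_{h_E}^{2Ka_0}$. The choice $Ka_0\geq 5$ is exactly what is needed for the product $w$ together with its first three derivatives to extend continuously across $E$ with value zero; the auxiliary expressions $e^{-K\psi}\Delta h$, $|\lambda|e^{-K\psi}\Delta\phi_E$ carry enough vanishing for the same reason. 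Passing to the limit along sequences $x_k\in X\setminus E$ converging to a point of $E$ then gives the inequality on $E$ trivially (both sides are zero there), completing the proof on $X$.

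I expect the principal technical nuisance to be bookkeeping the exact power of $|s_E|_{h_E}$ carried by each term after differentiation, to justify that $Ka_0\geq 5$ indeed suffices for $C^3$-extendability and that no derivative of $\log S_\epsilon$ introduces uncontrolled growth in the interior of $X\setminus D$; this is the reason the threshold $5/a_0$ appears in the lower bound on $K$.
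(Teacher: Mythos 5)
Your proof follows essentially the same route as the paper: Yau's inequality for $\Delta_\psi\log\tr_\omega\omega_\psi$ on $X\setminus E$, then the decomposition of $\Delta P$, then extension across $E$ via the $|s_E|_{h_E}^{2a_0K}$-vanishing and continuity. One small bookkeeping slip: you absorb the $-|\lambda|w$ contribution from $\Delta F_{t,\epsilon}$ into the coefficient of $w\,\tr_\psi\omega$, writing $(K-C_{4.1}-|\lambda|)w\,\tr_\psi\omega$, which implicitly uses $\tr_\psi\omega\ge 1$ (not known here); it should instead be absorbed into the $-C_{5.1}(w+1)$ term, and then $K\ge C_{4.1}+1$ already yields the coefficient $1$ on $w\,\tr_\psi\omega$ — the conclusion is unaffected. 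Also note that on $E$ the right side equals $-C_{5.1}$ rather than $0$; what you actually need (and have, by the $C^3$ extension) is continuity of both sides, which gives the inequality in the limit.
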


\begin{proof}
	By Yau's computation \cite{yau1978ricci}, on $X \setminus E$, 
	\begin{align*}
	\Delta_{\psi} \log w& =\Delta_{\psi} \log (tr_{\omega} \omega_{\psi})-K \Delta_{\psi} \psi \\
	& \geq (K-C_{4.1}) tr_{\psi} \omega+\frac{\Delta P}{tr_{\omega} \omega_{\psi}}-Kn.
	\end{align*}
	By choosing $K\geq C_{4.1}+1$, we obtain that
	\begin{align*}
	\Delta_{\psi} w & \geq w 	\Delta_{\psi} \log w \geq w tr_{\psi} w+e^{-K \psi} \Delta P-Kn w. 
	\end{align*}
	Recalling that
	\begin{align*}
	P=h+c_{t,\epsilon}+\kappa \log S_{\epsilon}+\log \frac{\omega_{X}^{n}}{   \omega^{n}   }-\lambda \varphi_{t,\epsilon}, 
	\end{align*}
	we use (\ref{estimate of S eps}) and obtain that
	\begin{align*}
	\Delta P& \geq \Delta h-n \kappa C_{s} +\Delta\big( \log \frac{\omega_{X}^{n}}{  \omega^{n}   }\big) -\lambda \Delta( \psi+ \phi_{E} ) \\
	& \geq \Delta h-\lambda \Delta \psi-C( \kappa, C_{s}, \sup_{X} |\Delta\big( \log \frac{\omega_{X}^{n}}{  \omega^{n}   }\big)|,h_{E}   ) \\
	& \geq \Delta h-\lambda tr_{\omega} \omega_{\psi}-C( \kappa, C_{s}, \sup_{X} |\Delta\big( \log \frac{\omega_{X}^{n}}{\omega^{n}    }\big)|, h_{E}   ) . 
	\end{align*}
	Observing that
	\begin{align}
	\label{LE2}
	e^{-K \psi }=e^{-K(  \varphi_{t,\epsilon}-a_{0} \log |s_{E} |_{ h_{E} }^{2}   )  } \leq e^{  K \|\varphi_{t,\epsilon} \|_{L^{\infty}(X)}  } |s_{E}|_{h_{E}}^{2a_{0}K}, 
	\end{align}
	then we further choose $K \geq  C_{4.1}+1+|\lambda|   $ and conclude that there exists a positive constant $C_{5.1}$ depending on $\kappa, C_{s}, \sup_{X} |\Delta\big( \log \frac{\omega_{X}^{n}}{ \omega^{n}  }\big)|,\lambda, h_{E},  \|\varphi_{t,\epsilon} \|_{L^{\infty}(X)}  $ and $\lambda$ such that
	\begin{align*}
	\Delta_{\psi} w   \geq w tr_{\psi} w+e^{-K \psi} \Delta h -C_{5.1}(w+1   ). 
	\end{align*}
	By the choice of $w$ and $K$, 
	\begin{align*}
	w=e^{-K \varphi_{t,\epsilon} } tr_{\omega} \omega_{\psi} |s_{E}|_{h_{E}}^{2a_{0}K}, \quad 2a_{0}K \geq 10,
	\end{align*}
	then $w$ can be extended to be a $C^{3}$-smooth function on $X$ by taking zero value on $E$. Then the differential inequality (\ref{LE1}) holds on $X$ by applying similar arguments as in the proof of Proposition \ref{prop differential inequality for gradient singular}.
\end{proof}

\subsection{1st Laplacian estimate and its application}
\begin{thm}
	\label{thm Laplacian estimate under Delta h bounded}
	We assume $\Delta h \geq -\mathscr{L}$ for some $\mathscr{L} \geq 0$  and $e^{h}  \in L^{ p_{0} }(\omega_{X}  )$ for some $p_{0}>1$. Then there exists a positive constants depending on $C_{5.1}$, $\mathscr{L}$, $\| e^{h}  \|_{ L^{p_{0}}(\omega_{X})  }$ and a lower bound of  $\| e^{h} |s|_{D}^{2\kappa} \|_{L^{1}(\omega_{X})}$ such that
	\begin{align*}
	n+\Delta \psi \leq \mathcal{L} |s_{E}|_{h_{E}}^{-2a_{0}K},
	\end{align*}
	where $K$ satisfies
	\begin{align*}
	K \geq C_{4.1}+1+|\lambda|, \quad 2a_{0}K \geq 10. 
	\end{align*}
\end{thm}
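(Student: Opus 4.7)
The plan is to reduce Theorem \ref{thm Laplacian estimate under Delta h bounded} to a uniform $L^{\infty}(X)$ bound on $w=e^{-K\psi}\,tr_{\omega}\omega_{\psi}$; since $e^{-K\psi}=e^{-K\varphi_{t,\epsilon}}|s_{E}|_{h_{E}}^{2a_{0}K}$ and $\|\varphi_{t,\epsilon}\|_{L^{\infty}}$ is uniformly controlled, such a bound immediately yields $(\ref{Laplacian estimate Laplacian h bdd intro})$. The strategy is an integral/Moser iteration paralleling Section 4, based on the differential inequality of Lemma \ref{lemma differential inequality for laplacian estimate} and the uniform Sobolev inequality of Lemma \ref{lemma Sobolev inequality uniform}.

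First, the hypothesis $\Delta h\ge-\mathscr{L}$ and the uniform bound $e^{-K\psi}\le e^{K\|\varphi_{t,\epsilon}\|_{L^{\infty}}}$ (from $|s_{E}|_{h_{E}}\le 1$) allow us to drop the nonnegative $w\,tr_{\psi}\omega$ term in Lemma \ref{lemma differential inequality for laplacian estimate} and obtain, on $X$,
\[
\Delta_{\psi} w\ \ge\ -C(w+1),
\]
for a constant $C$ depending on $C_{5.1},\mathscr{L},\|\varphi_{t,\epsilon}\|_{L^{\infty}}$. Multiplying by $-w^{2p}$ (for $p\ge\tfrac{1}{2}$) and integrating by parts against $\omega_{\psi}^{n}$ yields
\[
\frac{2p}{(p+\tfrac{1}{2})^{2}}\int_{X}|\nabla w^{p+\frac{1}{2}}|_{\psi}^{2}\,\omega_{\psi}^{n}\ \le\ C\int_{X}(w^{2p+1}+w^{2p})\,\omega_{\psi}^{n}.
\]
The hypothesis $e^{h}\in L^{p_{0}}(\omega_{X})$ with $p_{0}>1$ unlocks Lemma \ref{lemma Sobolev inequality uniform}, which combined with the above gives, for any $\gamma\in(1,\tfrac{n}{n-1})$,
\[
\Big(\int_{X} w^{(2p+1)\gamma}\,\omega_{\psi}^{n}\Big)^{1/\gamma}\ \le\ Cp\int_{X}(w^{2p+1}+1)\,\omega_{\psi}^{n}.
\]
A standard Moser iteration with interpolation, exactly as in \textbf{Step 2} of the proof of Theorem \ref{thm gradient estimate with general F}, then delivers
\[
\|w\|_{L^{\infty}(X)}\ \le\ C\big(1+\|w\|_{L^{1}(\omega_{\psi}^{n})}\big).
\]

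The main obstacle is verifying the starting integrability $\|w\|_{L^{1}(\omega_{\psi}^{n})}\le C$ uniformly in $t,\epsilon$. Using the upper bound on $e^{-K\psi}$, $S_{\epsilon}^{\kappa}\le 1$, $\omega\ge c_{0}\omega_{X}$, and Remark \ref{remark c t epsilon}, the task reduces to bounding $\int_{X} e^{h}\,tr_{\omega}\omega_{\psi}\,\omega^{n}=n\int_{X} e^{h}\,\omega_{\psi}\wedge\omega^{n-1}$. Since $\omega^{n-1}$ is closed (as $\omega$ is K\"ahler), Stokes transfers derivatives from $\psi$ onto $e^{h}$, producing integrals of the form $\int_{X}\psi\,e^{h}(\Delta h+|\nabla h|^{2})\,\omega_{X}^{n}$; the identity $\int_{X} e^{h}\Delta h\,\omega_{X}^{n}=-\int_{X} e^{h}|\nabla h|^{2}\,\omega_{X}^{n}$ combined with $\Delta h\ge-\mathscr{L}$ yields the key a priori bound $\int_{X} e^{h}|\nabla h|^{2}\,\omega_{X}^{n}\le\mathscr{L}\|e^{h}\|_{L^{1}}$, and the shift $\tilde{\psi}:=\psi+\|\varphi_{t,\epsilon}\|_{L^{\infty}}\ge 0$ together with a cutoff argument near $E$ handles the logarithmic growth of $\psi$ there. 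This integration by parts is where the $L^{\infty}$ hypothesis on $e^{h}$ used in \cite{eyssidieux2009singular} is relaxed to $L^{p_{0}}$-integrability, and is the technical heart of the argument.
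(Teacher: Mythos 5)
Your plan matches the paper's proof of Theorem \ref{thm Laplacian estimate under Delta h bounded} in its main mechanism: drop the positive $w\,\mathrm{tr}_{\psi}\omega$ term in Lemma \ref{lemma differential inequality for laplacian estimate}, use $\Delta h\geq-\mathscr{L}$ and $e^{-K\psi}\leq e^{K\|\varphi_{t,\epsilon}\|_{L^{\infty}}}$ to reach $\Delta_{\psi}w\geq-C(w+1)$, test with $-w^{2p}$, invoke the uniform Sobolev inequality (Lemma \ref{lemma Sobolev inequality uniform}), and iterate. Up to that point the two arguments are essentially identical. Where you go further is in flagging the starting-level integrability for the Moser iteration. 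The paper simply writes ``Using Moser's iteration, we obtain $\|w\|_{L^{\infty}(X)}\leq C(\gamma,C_{5.3})$'' without supplying a uniform $L^{q_{0}}(\omega_{\psi}^{n})$ bound for any $q_{0}$; the only place in the paper where such a bound is established is the commented-out block after Theorem \ref{thm gradient estimate with general F}, which uses $\sup_{X}P<\infty$ --- a hypothesis the present theorem does not have. So your attention to this issue is warranted.

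However, your proposed integration-by-parts argument for $\|w\|_{L^{1}(\omega_{\psi}^{n})}$ has a gap. You write that ``the task reduces to bounding $\int_{X}e^{h}\,\mathrm{tr}_{\omega}\omega_{\psi}\,\omega^{n}$,'' but this uses the crude bound $e^{-K\psi}\leq C$, which discards the factor $|s_{E}|_{h_{E}}^{2a_{0}K}$ hidden inside $e^{-K\psi}=e^{-K\varphi_{t,\epsilon}}|s_{E}|_{h_{E}}^{2a_{0}K}$. After two applications of Stokes you arrive at $\int_{X}\psi\,e^{h}(\Delta h+|\nabla h|^{2})\,\omega^{n}$ (up to constants). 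The a priori information you have is that $e^{h}(\Delta h+|\nabla h|^{2})$ is merely in $L^{1}$: indeed $\int e^{h}|\nabla h|^{2}\leq\mathscr{L}\|e^{h}\|_{L^{1}}$ and $\int e^{h}|\Delta h|\leq 2\mathscr{L}\|e^{h}\|_{L^{1}}$, both of which follow from $\int\Delta e^{h}=0$ and $\Delta h\geq-\mathscr{L}$. But $\psi=\varphi_{t,\epsilon}-a_{0}\log|s_{E}|_{h_{E}}^{2}$ grows like $-\log|s_{E}|_{h_{E}}^{2}$ near $E$ and is \emph{not} bounded above, so the pairing of an unbounded function against an $L^{1}$ density is not a priori finite, and the proposed shift $\tilde{\psi}:=\psi+\|\varphi_{t,\epsilon}\|_{L^{\infty}}$ does nothing to remedy this --- the shift only makes $\tilde{\psi}\geq 0$, it does not make it bounded above. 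The remedy is precisely to \emph{not} discard the weight: keep $\rho:=|s_{E}|_{h_{E}}^{2a_{0}K}e^{h}$ as the test function in the Stokes argument, and observe that $i\partial\bar\partial\rho$ contains only factors of the form $|s_{E}|_{h_{E}}^{2a_{0}K-2}$ at worst, so the product $\psi\,i\partial\bar\partial\rho$ is bounded near $E$ because $2a_{0}K\geq 10$ dominates the logarithmic growth of $\psi$ there. This is exactly the role the size condition on $K$ plays in Lemma \ref{lemma differential inequality for laplacian estimate}, and it is what makes the cutoff-and-limit justification of Stokes near $E$ go through.

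There is also a smaller but real mismatch between the iteration inequality you quote and the interpolation step of Step 2 in the proof of Theorem \ref{thm gradient estimate with general F}. Your iteration inequality goes straight from $L^{q}$ to $L^{q\gamma}$, whereas Step 2 first interposes $L^{q\alpha}$ with $1<\alpha<\gamma$ (obtained by H\"older against the bounded volume) before applying the interpolation inequality; only with that intermediate $\alpha$-power does the interpolation $\|w^{p+\frac{1}{2}}\|_{L^{2\alpha}}\leq\varepsilon\|w^{p+\frac{1}{2}}\|_{L^{2\gamma}}+\varepsilon^{-\theta}\|w^{p+\frac{1}{2}}\|_{L^{1}}$ let you absorb the high norm into the left side. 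If you want to invoke Step 2 verbatim, you should first rewrite the right-hand side $\int(w^{2p+1}+1)\,\omega_{\psi}^{n}$ as $\big(\int w^{(2p+1)\alpha}\big)^{1/\alpha}+C$ via H\"older with the normalized volume, as the paper does in (\ref{GG9}). Once the $L^{1}$ bound on $w$ is truly in hand, the rest of your argument delivers the stated estimate.
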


\begin{proof}
	Since $\Delta h \geq -\mathscr{L}$, then by (\ref{LE1}) and (\ref{LE2}), 
	\begin{align}
	\label{LE3}
	\Delta_{\psi} w  \geq -\mathscr{L}e^{-K \psi} -C_{5.1}(w+1   ) \geq -C_{5.2}(w+1),
	\end{align}
	where $C_{5.2}>0$ depending on $C_{5.1}, \mathscr{L}, K, \|\varphi_{t,\epsilon} \|_{L^{\infty}(X)}$. 
	
	Integrating by parts, 
 we obtain that for any $p \geq \frac{1}{2}$, 
	\begin{align*}
	\frac{2p}{ (p+\frac{1}{2})^{2}  } \int_{X} |\nabla ( w^{ p+\frac{1}{2} } ) |_{\psi}^{2} \omega_{\psi}^{n} \leq C_{5.2} \int_{X} (w^{2p+1}+w^{2p}) \omega_{\psi}^{n}. 
	\end{align*}
	
	By Lemma \ref{lemma Sobolev inequality uniform}, we choose $\gamma \in (1,\frac{n}{n-1})$ and conclude that
	\begin{align*}
	\big( \int_{X} w^{(2p+1)\gamma} \omega_{\psi}^{n} \big)^{\frac{1}{\gamma} } & \leq C_{sob}^{\gamma} \int_{X}
	\big( w^{2p+1}+|\nabla (w^{ p+\frac {1}{2} } ) |_{\psi }^{2} \big) \omega_{\psi}^{n} \leq C_{5.3} p \int_{X} (w^{2p+1}+w^{2p}) \omega_{\psi}^{n},
	\end{align*}
	where $C_{5.3}>0$ depends on $C_{5.2}$ and $C_{sob}^{\gamma} $. 
	By Young's inequality, 
	\begin{align*}
	w^{2p} \leq \frac{2p}{2p+1} w^{2p+1}+\frac{1}{2p+1} \leq w^{2p+1}+1, 
	\end{align*}
	which yields that
	\begin{align*}
		\big( \int_{X} w^{(2p+1)\gamma} \omega_{\psi}^{n} \big)^{\frac{1}{\gamma} } \leq 2C_{5.3} p \int_{X} ( w^{2p+1}+1) \omega_{\psi}^{n}. 
	\end{align*}
	Using Moser's iteration, we obtain 
	\begin{align*}
	\|w\|_{ L^{\infty}(X)  } \leq C(\gamma, C_{5.3}). 
	\end{align*}
	Therefore, there exists a positive constant $\mathcal{L}$ depending on $C_{5.3}, \mathscr{L},  \|\varphi_{t,\epsilon} \|_{L^{\infty}(X)}$ such that
	\begin{align*}
	n+\Delta \psi =tr_{\omega} \omega_{\psi} \leq e^{ K \psi } \|w\|_{L^{\infty}(X)  } \leq C(\gamma, C_{5.3}) e^{ K( \varphi_{t,\epsilon} -a_{0} \log |s_{E}|_{h_{E}}^{2}  )   } \leq \mathcal{L} |s_{E}|_{h_{E}}^{-2a_{0}K}.
	\end{align*}
\end{proof}

One application of Theorem \ref{thm Laplacian estimate under Delta h bounded} is that we can answer Question \ref{que no angle constraints} by adding assumptions: $\Delta h \geq -\mathscr{L}$ for some $\mathscr{L} \geq 0$. Before the proof, we give a brief explanation as follows. 

We still use the notations of $u, v, H$ as in Section 4. The reason why
the angle constraints on $\kappa$ is unavoidable in Theorem \ref{thm gradient estimate singular intro} is that when we deal with the term $\mathcal{J}$ in Section 4, we take derivatives of $N=(\kappa+\sigma_{D}) \log S_{\epsilon}+h$ directly which results in the occurrence of $S_{\epsilon}^{-\frac{1}{2}}$ in some new term (see $\mathcal{J}_{2}$ by taking $\sigma_{D}=0$), and brings us issues for getting the iteration inequality if we do not add angle constraints. 

However, if we assume $\Delta h$ has a  lower bound, then $\Delta \psi$ has an upper bound by Theorem \ref{thm Laplacian estimate under Delta h bounded}, which allows us to deal with the term $\mathcal{J}$ in an alternative way: instead of taking derivatives, we integrate by parts and all new terms occured can be estimated without adding angle constraints. 

\begin{theorem}
	\label{theorem gradient estimate with Delta h bounded}
		We assume that $e^{h} \in L^{p_{0}}(\omega_{X})$ for some $p_{0}>1$ and
		$\Delta h \geq -\mathscr{L}$ for some $\mathscr{L} \geq 0$. Then there exists a positive constant $\mathscr{G}$ depending on $C_{d}, \sup_{X}  |\nabla (\log \frac{\omega_{X}^{n}}{ \omega^{n}  }  )|,\sup_{X} |\nabla (|s_{E}|_{h_{E}}^{2} )| , \|\varphi_{t,\epsilon}\|_{L^{\infty}(X)}, \mathscr{L}$, $\| e^{h}  \|_{ L^{p_{0}}(\omega_{X})  }$ and a lower bound of  $\| e^{h} |s|_{D}^{2\kappa} \|_{L^{1}(\omega_{X})}$
		such that
		\begin{align*}
		|\nabla \psi| \leq \mathscr{G}  |s_{E}|_{h_{E}}^{ -a_{0}( \sigma_{E}-\lambda )  }. 
		\end{align*}
\end{theorem}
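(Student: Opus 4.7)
The plan is to re-run the framework of Section 4 with $\sigma_D=0$ (so that no angle condition on $\kappa$ is imposed) and to replace the pointwise estimate of $\mathcal{J}$ in Propositions \ref{prop condition on h gradient singular}--\ref{prop sharp condition on kappa} by an integration-by-parts argument fed by the Laplacian bound of Theorem \ref{thm Laplacian estimate under Delta h bounded}. With $H=(\lambda-\sigma_E)\psi$, $u=e^H(|\nabla\psi|^2+1)$ and $N=\kappa\log S_\epsilon+h$, Propositions \ref{prop differential inequality for gradient singular} and \ref{prop integral inequality for gradient singular} still deliver
\[
\frac{2p}{(p+\tfrac12)^2}\int_X|\nabla u^{p+\tfrac12}|_\psi^2\,\omega_\psi^n\le C_d\int_X(u^{2p+1}+u^{2p})\,\omega_\psi^n+\mathcal{J},
\]
with $\mathcal{J}=-\int_X\langle\nabla N,\nabla\psi\rangle_\omega\,e^Hu^{2p}\,\omega_\psi^n$. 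The obstruction in Section 4, namely $|\nabla\log S_\epsilon|\lesssim S_\epsilon^{-1/2}$, is avoided here by never differentiating $\log S_\epsilon$.

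The key algebraic observation is that $P=-\lambda\vphi_{t,\epsilon}+N+c_{t,\epsilon}+\log(\omega_X^n/\omega^n)$, so $e^P=e^NR_0$ with $R_0:=e^{-\lambda\vphi_{t,\epsilon}+c_{t,\epsilon}}(\omega_X^n/\omega^n)$ carrying no $S_\epsilon^\kappa$ singularity. Setting $R:=e^Hu^{2p}R_0$ and $Q:=Re^N=e^Hu^{2p}e^P$, one rewrites
\[
\mathcal{J}=-\int_X\langle\nabla e^N,\nabla\psi\rangle_\omega\,R\,\omega^n,
\]
and integrates by parts in $\omega$ (the boundary contribution at $E$ vanishes because $R$ extends $C^3$-smoothly to $X$ with zero value on $E$ whenever $a_0(\sigma_E-\lambda)$ is chosen large, and $e^N\in L^1(\omega_X^n)$) to obtain
\[
\mathcal{J}=\int_X Q\,\Delta_\omega\psi\,\omega^n+\int_X Q\,\langle\nabla\log R,\nabla\psi\rangle_\omega\,\omega^n.
\]
Crucially $\nabla\log R=\nabla H+2p\nabla\log u-\lambda\nabla\vphi_{t,\epsilon}+\nabla\log(\omega_X^n/\omega^n)$ contains no $\nabla N$, so the naive feedback on $\nabla N$ does not re-enter.

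By Theorem \ref{thm Laplacian estimate under Delta h bounded}, $|\Delta_\omega\psi|\le(\mathcal L+n)|s_E|_{h_E}^{-2a_0K}$; choosing $\sigma_E-\lambda\ge K$ makes $e^H|s_E|_{h_E}^{-2a_0K}$ bounded, and the first integral is controlled by $\int u^{2p}\omega_\psi^n$. In the cross term, $\nabla H$, $-\lambda\nabla\vphi_{t,\epsilon}$ and $\nabla\log(\omega_X^n/\omega^n)$ produce multiples of $\int u^{2p+1}\omega_\psi^n$ after using $|\nabla\psi|^2\le ue^{-H}$ and absorbing the singular contribution $-\lambda a_0\nabla\log|s_E|^2$ coming from $\nabla\phi_E$ against the high vanishing order of $Q$ at $E$. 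The remaining $2p\nabla\log u\cdot\nabla\psi=\nabla u^{2p}\cdot\nabla\psi/u^{2p}$ piece is split by Cauchy--Schwarz into $\tfrac{p}{(p+\tfrac12)^2}\int|\nabla u^{p+\tfrac12}|_\psi^2\omega_\psi^n$, absorbed on the left, and a residual $\lesssim p\int u^{2p+1}\omega_\psi^n$. The main obstacle is the bookkeeping for this absorption, which must keep the coefficient on the left strictly positive uniformly in $p$; once this is done, Lemma \ref{lemma Sobolev inequality uniform} together with H\"older and Young's inequalities produces
\[
\Bigl(\int_X u^{(2p+1)\gamma}\omega_\psi^n\Bigr)^{1/\gamma}\le C(p)\Bigl(\bigl(\int_X u^{(2p+1)\alpha}\omega_\psi^n\bigr)^{1/\alpha}+1\Bigr)
\]
for some $1<\alpha<\gamma<n/(n-1)$, and Moser iteration exactly as in Theorem \ref{thm gradient estimate singular} gives $\|u\|_{L^\infty(X)}\le C$, whence the stated bound $|\nabla\psi|\le\mathscr G|s_E|_{h_E}^{-a_0(\sigma_E-\lambda)}$ follows by unwinding $u=e^H(|\nabla\psi|^2+1)$.
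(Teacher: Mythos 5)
Your argument is correct and follows essentially the same route as the paper's proof: set $\sigma_D=0$, convert $\mathcal J$ by integration by parts against the singular factor (using the high vanishing order of $e^H u^{2p}$ at $E$), and feed in the Laplacian bound of Theorem~\ref{thm Laplacian estimate under Delta h bounded}. The only difference is organizational: you integrate by parts directly against $e^N$ and then split $\nabla\log R$, whereas the paper first peels off the $N-P$ difference (its display $(\ref{LE5})$) and then integrates by parts against $e^P$ via the identity $e^H\nabla\psi=\tfrac{1}{\sigma_E-\lambda}\nabla e^{(\lambda-\sigma_E)\psi}$, producing the three integrals $\mathcal I_1,\mathcal I_2,\mathcal I_3$; your $\int Q\Delta_\omega\psi\,\omega^n$, the $\nabla H$ piece, the $2p\nabla\log u$ piece, and the $\nabla\log R_0$ piece are exactly $\mathcal I_1,\mathcal I_3,\mathcal I_2$, and the $(\ref{LE5})$-term, respectively. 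One point that deserves to be made explicit in your write-up: in the $2p\nabla\log u\cdot\nabla\psi$ piece the inner product is taken with respect to $\omega$ while the term you absorb on the left is $|\nabla u^{p+\frac12}|_\psi^2$ (with respect to $\omega_\psi$), so the Cauchy--Schwarz/Young split must first pass through the metric comparison $|\nabla u|_\omega\le(\mathrm{tr}_\omega\omega_\psi)^{1/2}|\nabla u|_\psi$; the resulting $\mathrm{tr}_\omega\omega_\psi$ is controlled by a \emph{second} invocation of Theorem~\ref{thm Laplacian estimate under Delta h bounded}, as in display $(\ref{LE8})$, which the paper flags with the phrase ``By Theorem~\ref{thm Laplacian estimate under Delta h bounded} again.'' Without that conversion the residual from Young's inequality does not collapse to $\lesssim p\int u^{2p}\,\omega_\psi^n$, and the absorption you describe would stall. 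With this inserted, your proof is complete and matches the paper's.
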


\begin{proof}
	By Proposition \ref{prop integral inequality for gradient singular}, 
	for any $p \geq \frac{1}{2}$, 
	\begin{align}
	\label{LE4}
	\frac{2p}{ (p+\frac{1}{2})^{2}  } \int_{X} |\nabla ( u^{ p+\frac{1}{2} } ) |_{\psi}^{2} \omega_{\psi}^{n} 
	\leq  C_{d} \int_{X} (u^{2p+1}+u^{2p} )\omega_{\psi}^{n} +\mathcal{J}, 
	\end{align}
where
\begin{align*}
\mathcal{J}= - 2 \int_{X}  Re[    N_i  \psi_{\bar{i}}     ]e^{H} u^{2p} \omega_{\psi}^{n}, \quad N =(  \kappa+\sigma_{D}  )  \log S_{\epsilon}     + h. 
\end{align*}
We take $\sigma_{D}=0$ here. 
Since $ Re[    N_i  \psi_{\bar{i}}     ]e^{H} u^{2p}=0$ on $E$ by extension (see Proposition \ref{prop differential inequality for gradient singular}), then
		\begin{align*}
		\mathcal{J}= - 2 \int_{X \setminus E}  Re[    N_i  \psi_{\bar{i}}     ]e^{H} u^{2p} \omega_{\psi}^{n}. 
		\end{align*}
By the definition of $P$, 
\begin{align*}
&-\int_{X \setminus E} \left\langle \nabla (N-P), \nabla \psi \right\rangle e^{H} u^{2p} \omega_{\psi}^{n} \\
 \leq &  \int_{X \setminus E} |\nabla \big( -c_{t,\epsilon}-\log \frac{\omega_{X}^{n}}{ \omega^{n}  }+\lambda \psi +\lambda a_{0} \log |s_{E}|_{h_{E}}^{2} \big) |\cdot |\nabla \psi|e^{H} u^{2p} \omega_{\psi}^{n} \\
\leq & \int_{X \setminus E}  \big( |\nabla (\log \frac{\omega_{X}^{n}}{ \omega^{n}  }  )|+|\lambda||\nabla \psi|+|\lambda |a_{0} |\nabla (|s_{E}|_{h_{E}}^{2} )| \cdot |s_{E}|_{h_{E}}^{-2}   \big)\cdot |\nabla \psi|e^{H}u^{2p} \omega_{\psi}^{n} \\
\leq &  |\lambda | \int_{X} u^{2p+1} \omega_{\psi}^{n}+C( \sup_{X}  |\nabla (\log \frac{\omega_{X}^{n}}{ \omega^{n}  }  )|,\sup_{X} |\nabla (|s_{E}|_{h_{E}}^{2} )|    )\int_{X} |s_{E}|_{h_{E}}^{-2} e^{ \frac{H}{2}  } u^{2p+\frac{1}{2}} \omega_{\psi}^{n}. 
\end{align*}
Since
\begin{align*}
e^{\frac{H}{2} }=e^{  \frac{  \lambda-\sigma_{E}}{2} (  \varphi_{t,\epsilon}-a_{0} \log |s_{E}|_{h_{E}}^{2}  )  } \leq e^{  (\sigma_{E}-\lambda) \|\varphi_{t,\epsilon} \|_{L^{\infty}(X)}       } |s_{E}|_{h_{E}}^{   a_{0}(\sigma_{E}-\lambda)  },
\end{align*}
then by  choosing  $\sigma_{E}$ such that $a_{0}(\sigma_{E}-\lambda) \geq 2$, we conclude that
\begin{align}
\label{LE5}
-\int_{X} \left\langle \nabla (N-P), \nabla \psi \right\rangle e^{H} u^{2p} \omega_{\psi}^{n} \leq C_{5.4} \int_{X} (  u^{2p+1} +u^{2p+\frac{1}{2} } ) \omega_{\psi}^{n}, 
\end{align}
where $C_{5.4}>0$ depends on $\sup_{X}  |\nabla (\log \frac{\omega_{X}^{n}}{ \omega^{n}  }  )|, \sup_{X} |\nabla (|s_{E}|_{h_{E}}^{2})|, \|\varphi_{t,\epsilon} \|_{L^{\infty}(X)}    $. 
Inserting (\ref{LE5}) into (\ref{LE4}), we obtain that
\begin{align}
\label{LE6}
\frac{2p}{ (p+\frac{1}{2})^{2}  } \int_{X} |\nabla ( u^{ p+\frac{1}{2} } ) |_{\psi}^{2} \omega_{\psi}^{n} \leq (C_{d}+C_{5.4})  \int_{X} (u^{2p+1}+u^{2p+\frac{1}{2}}+ u^{2p}) \omega_{\psi}^{n} +\tilde{\mathcal{J} } , 
\end{align}
where 
\begin{align*}
\tilde{\mathcal{J} } =-\int_{X \setminus E} \left\langle \nabla P, \nabla \psi \right\rangle e^{H} u^{2p} \omega_{\psi}^{n} . 
\end{align*}

By direct calculations, 
\begin{align*}
\tilde{\mathcal{J} } &=-\int_{X \setminus E } \left\langle \nabla P, \nabla \psi \right\rangle e^{H} u^{2p} e^{P} \omega^{n} \\
& =-\int_{X \setminus E} \left\langle  \nabla e^{P}, \nabla \psi   \right\rangle e^{(\lambda-\sigma_{E}) \psi} u^{2p}  \omega^{n} \\
&=\frac{1}{\sigma_{E}-\lambda}  \int_{X \setminus E} \left\langle  \nabla e^{P}, \nabla e^{(\lambda-\sigma_{E}) \psi}   \right\rangle  u^{2p}  \omega^{n} \\
&=\frac{1}{\sigma_{E}-\lambda} \int_{X \setminus E} \left\langle  \nabla (e^{P} u^{2p}), \nabla e^{(\lambda-\sigma_{E}) \psi}   \right\rangle  \omega^{n} -\frac{1}{\sigma_{E}-\lambda}  \int_{X \setminus E} \left\langle  \nabla  u^{2p}, \nabla e^{(\lambda-\sigma_{E}) \psi}   \right\rangle e^{P} \omega^{n} .
\end{align*}
Observing that 
\begin{align*}
e^{ (\lambda-\sigma_{E}) \psi  }=e^{  (\lambda-\sigma_{E}) \varphi_{t,\epsilon}    } |s_{E}|_{h_{E}}^{2a_{0}(\sigma_{E}-\lambda) }, 
\end{align*}
then by choosing $\sigma_{E}$ such that $2a_{0}(\sigma_{E}-\lambda) \geq 10$, $e^{ (\lambda-\sigma_{E}) \psi  }$ can be extended to be a $C^{3}$-smooth function on $X$ by taking zero value on $E$, and we still denote the function after extension by $e^{ (\lambda-\sigma_{E}) \psi  }$. Then
\begin{align*}
\tilde{\mathcal{J} } 
&=\frac{1}{\sigma_{E}-\lambda} \int_{X} \left\langle  \nabla (e^{P} u^{2p}), \nabla e^{(\lambda-\sigma_{E}) \psi}   \right\rangle  \omega^{n} -\frac{1}{\sigma_{E}-\lambda}  \int_{X \setminus E} \left\langle  \nabla  u^{2p}, \nabla e^{(\lambda-\sigma_{E}) \psi}   \right\rangle e^{P} \omega^{n} \\
&=-\frac{1}{\sigma_{E}-\lambda}\int_{X} e^{P} u^{2p} \Delta(  e^{ (\lambda-\sigma_{E}) \psi  }   ) \omega^{n} -\frac{1}{\sigma_{E}-\lambda}  \int_{X \setminus E} \left\langle  \nabla  u^{2p}, \nabla e^{(\lambda-\sigma_{E}) \psi}   \right\rangle e^{P} \omega^{n} \\
&=\int_{X \setminus E} e^{P} u^{2p} \big(  e^{ (\lambda-\sigma_{E}) \psi  } \Delta \psi+( \lambda-\sigma_{E} ) e^{ (\lambda-\sigma_{E}) \psi  } |\nabla \psi|^{2}    \big)
\omega^{n} +\int_{X \setminus E} \left\langle  \nabla  u^{2p}, \nabla \psi   \right\rangle e^{(\lambda-\sigma_{E}) \psi}  e^{P} \omega^{n} \\
& =\int_{X \setminus E} u^{2p} e^{H} \Delta \psi e^{P} \omega^{n} \quad (\text{denoted by } \mathcal{I}_{1})\\
&+\int_{X \setminus E} 2p u^{2p-1} \left\langle \nabla u, \nabla \psi  \right\rangle  e^{H} e^{P} \omega^{n} \quad (\text{denoted by } \mathcal{I}_{2})\\
& +\int_{X \setminus E} u^{2p} \left\langle \nabla H, \nabla \psi   \right\rangle  e^{H} e^{P} \omega^{n} \quad (\text{denoted by } \mathcal{I}_{3}). 
\end{align*}
We next estimate $\mathcal{I}_{1},\mathcal{I}_{2}$ and $\mathcal{I}_{3}$ step by step. 

By Theorem \ref{thm Laplacian estimate under Delta h bounded}, 
\begin{align}
\mathcal{I}_{1} & \leq  \int_{X \setminus E} u^{2p} e^{H} (\Delta \psi+n) \omega_{\psi}^{n} \nonumber \\
&\leq  \mathcal{L}  \int_{X \setminus E} u^{2p} e^{H} |s_{E}|_{h_{E}}^{-2a_{0}K}   \omega_{\psi}^{n} \nonumber\\
& \leq \mathcal{L}  e^{ (\sigma_{E}-\lambda) \|\varphi_{t,\epsilon}\|_{L^{\infty}(X)}     } \int_{X \setminus E} u^{2p} |s_{E}|_{h_{E}}^{ 2a_{0}(\sigma_{E}-\lambda-K)      }  \omega_{\psi}^{n} , \nonumber\\
& \leq C_{5.5} \int_{X} u^{2p}  \omega_{\psi}^{n} ,\label{LE7}
\end{align}
where $C_{5.5}>0$ depends on $\mathcal{L} ,  \|\varphi_{t,\epsilon}\|_{L^{\infty}(X)}   $ and $\sigma_{E}$ is chosen such that $\sigma_{E} \geq \lambda+K$ (we should note that the choice of $K$ depends only on $C_{4.1}, \lambda$  and $a_{0}$). 

By Theorem \ref{thm Laplacian estimate under Delta h bounded} again and Young's inequality, for  any $\tau>0$, 
\begin{align}
\mathcal{I}_{2}  &\leq \int_{X \setminus E} 2p u^{2p-1} |\nabla u||\nabla \psi| e^{H} \omega_{\psi}^{n} \nonumber\\
& \leq \int_{X \setminus E} 2p u^{2p-1} |\nabla u|_{\psi} (tr_{\omega} \omega_{\psi}    )^{\frac{1}{2}}   |\nabla \psi| e^{H} \omega_{\psi}^{n} \nonumber \\
& \leq \int_{X \setminus E} 2p u^{2p-1} \big(  \tau |\nabla u|_{\psi}^{2}+C(\tau) tr_{\omega} \omega_{\psi} |\nabla \psi|^{2} e^{2H} \big) \omega_{\psi}^{n} \nonumber \\
& \leq \tau \int_{X \setminus E} \frac{2p}{  (p+\frac{1}{2})^{2}  } |\nabla (u^{p+\frac{1}{2}  }) |_{\psi}^{2} \omega_{\psi}^{n}+C(\tau) \mathcal{L} p \int_{X \setminus E} u^{2p-1}  (|s_{E}|_{h_{E}}^{ -2a_{0} K } e^{H} )(|\nabla \psi|^{2} e^{H}) \omega_{\psi}^{n} \nonumber \\
& \leq \tau \int_{X} \frac{2p}{  (p+\frac{1}{2})^{2}  } |\nabla(u^{p+\frac{1}{2}  }) |_{\psi}^{2} \omega_{\psi}^{n}+C_{5.6}p \int_{X}  u^{2p } \omega_{\psi}^{n}, \label{LE8}
\end{align}
where we also use $\sigma_{E} \geq \lambda+K$ and $C_{5.6}>0$ depends on  $\tau,\mathcal{L} ,  \|\varphi_{t,\epsilon}\|_{L^{\infty}(X)}   $. 

It is obvious that
\begin{align}
\mathcal{I}_{3} \leq \int_{X \setminus E} u^{2p} |\nabla H||\nabla \psi| e^{H} \omega_{\psi}^{n} 
 \leq (\sigma_{E}-\lambda) \int_{X \setminus E} u^{2p} |\nabla \psi|^{2} e^{H} \omega_{\psi}^{n} \leq C_{5.7} \int_{X} u^{2p+1} \omega_{\psi}^{n} . \label{LE9}
\end{align}

Inserting (\ref{LE7}), (\ref{LE8})  and (\ref{LE9}) back into (\ref{LE6}) and taking $\tau=\frac{1}{4}$, we obtain 
Therefore, we obtain
\begin{align}
\frac{2p}{ (p+\frac{1}{2})^{2}  } \int_{X} |\nabla ( u^{ p+\frac{1}{2} } ) |_{\psi}^{2} \omega_{\psi}^{n} \leq C_{5.8} p  \int_{X} (u^{2p+1}+ 1) \omega_{\psi}^{n} ,\label{LE10} 
\end{align}
where we use $u^{2p}+u^{2p+\frac{1}{2}} \leq u^{2p+1}+2$ by Young's inequality and $C_{5.8}>0$ depends on $C_{d}, C_{5.4},C_{5.5},C_{5.6}, C_{5.7}$. 

Applying  Lemma \ref{lemma Sobolev inequality uniform}, we obtain that 
\begin{align}
\big( \int_{X} u^{(2p+1)\gamma} \omega_{\psi}^{n} \big)^{\frac{1}{\gamma} }  \leq C_{5.9} p^{2} \int_{X} (u^{2p+1}+ 1) \omega_{\psi}^{n} ,\label{LE11}  
\end{align}
where $\gamma \in (1,\frac{n}{n-1})$ and $C_{5.9}$ depends on $C_{5.8}$ and $C_{sob}^{\gamma}$. 
By Moser's inequality, there exists a positive constant $C_{5.10}>1$ depending on $C_{5.9}$ such that
\begin{align*}
\|u\|_{L^{\infty}(X)} \leq C_{5.10}. 
\end{align*}
By the definition of $u$  and $C_{5.10}>1$, 
\begin{align*}
|\nabla \psi| \leq e^{ -\frac{H}{2} } u^{\frac{1}{2} }  \leq C_{5.10} e^{ \frac{ \sigma_{E}-\lambda}{2} \| \varphi_{t,\epsilon} \|_{ L^{\infty}(X) } }
|s_{E}|_{h_{E}}^{ -a_{0}( \sigma_{E}-\lambda )  }:=\mathscr{G}  |s_{E}|_{h_{E}}^{ -a_{0}( \sigma_{E}-\lambda )  }. 
\end{align*}
We complete the proof. 
\end{proof}

Theorem \ref{thm estimate under laplace h bound intro} follows from Theorem \ref{thm Laplacian estimate under Delta h bounded} and Theorem \ref{theorem gradient estimate with Delta h bounded} immediately. 

\subsection{2nd Laplacian estimate}
We divide the proof of Theorem \ref{thm 2nd Laplacian estimate intro} into the following several parts.

\subsubsection{Integral differential ineuqality}
	Multiplying both sides of  (\ref{LE1}) with $-w^{2p}$ and integrating by parts, we obtain the following integral inequality: for any $p \geq \frac{1}{2}$, 
\begin{align}
\frac{2p}{(p+\frac{1}{2})^{2}   } \int_{X} |\nabla(w^{p+\frac{1}{2} })|_{\psi}^{2} \omega_{\psi}^{n}
\leq   C_{5.1} \int_{X} (w^{2p+1}+w^{2p})  \omega_{\psi}^{n} + \mathcal{Q} \label{LE12}, 
\end{align}
where 
\begin{align*}
\mathcal{Q}:=-\int_{X} e^{-K \psi} w^{2p} \Delta h \omega_{\psi}^{n} . 
\end{align*}

\subsubsection{Dealing with $\mathcal{Q}$}
From now on, we assume $e^{ \frac{h}{b} } \in W^{1,b}(\omega_{X}) $  for some $b >2n$ and 	$\kappa >\frac{n}{2} -    n_{s}$. By Theorem \ref{thm gradient estimate singular intro}, the  gradient estimate (\ref{singular gradient estimate intro}) holds. 

Integrating by parts, 
\begin{align*}
\mathcal{Q}=&-\int_{X} e^{-K \psi} w^{2p} \Delta h e^{ P} \omega^{n} \\
=& \int_{X} \left\langle \nabla (  e^{-K \psi}w^{2p} e^{P}),  \nabla h \right\rangle   \omega^{n} \\
=&\int_{X} 2p w^{2p-1} e^{ -K \psi } \left\langle \nabla w,  \nabla h \right\rangle  e^{P} \omega^{n} \quad (\text{denoted by } \mathcal{Q}_{1}) \\
+& \int_{X} -K e^{-K \psi  } w^{2p}   \left\langle \nabla \psi,  \nabla h \right\rangle
e^{ P } \omega^{n} \quad (\text{denoted by } \mathcal{Q}_{2}) \\
+& \int_{X} e^{ -K \psi } w^{2p} \left\langle \nabla P,  \nabla h \right\rangle  e^{ P} \omega^{n} \quad (\text{denoted by } \mathcal{Q}_{3}).
\end{align*}

Oberving that $e^{-K \psi} \leq C( \|\varphi_{t,\epsilon} \|_{L^{\infty}(X)}  )|s_{E}|_{h_{E}}^{2a_{0}K}$, we apply
 Young's inequality and obtain that for any $\tau>0$, 
\begin{align}
|	\mathcal{Q}_{1}  | & \leq \int_{X} 2p w^{2p-1} e^{-K\psi} |   \nabla w|  \cdot  |\nabla h|  \omega_{\psi}^{n} \nonumber \\
& \leq \int_{X} 2p w^{2p-1} e^{-K \psi}|\nabla h|  \cdot  |\nabla w|_{\psi} (tr_{\omega} \omega_{\psi})^{\frac{1}{2}}  \omega_{\psi}^{n} \nonumber \\
& \leq  \int_{X} 2p w^{2p-1} \big( \tau|\nabla w|_{\psi}^{2}+C(\tau) e^{-2K \psi } tr_{\omega} \omega_{\psi} |\nabla h|^{2} \big)  \omega_{\psi}^{n} \nonumber\\
&=\tau \int_{X} \frac{2p}{ ( p+\frac{1}{2} )^{2} } |\nabla (w^{p+\frac{1}{2}} ) |_{\psi}^{2} \omega_{\psi}^{n} +
C_{5.11} p \mathcal{Q}_{4}, \label{LE13}
\end{align}
where $C_{5.11}$ depends on $\tau, \|\varphi_{t,\epsilon} \|_{L^{\infty}(X)} $ and 
\begin{align*}
\mathcal{Q}_{4} :=\int_{X} w^{2p} |\nabla h|^{2}  \omega_{\psi}^{n}. 
\end{align*}

By (\ref{singular gradient estimate intro}), 
\begin{align*}
|\nabla \psi| \leq \mathscr{C}_{\kappa} |s_{E}|_{h_{E}}^{ -a_{0} ( \sigma_{E}-\lambda  )      }, 
\end{align*}
hence it yields that
\begin{align}
|\mathcal{Q}_{2} | & \leq C(\|\varphi_{t,\epsilon} \|_{L^{\infty}(X)} )     \int_{X} |s_{E}|_{h_{E}}^{2a_{0}K} w^{2p} |\nabla \psi|  \cdot |\nabla h | \omega_{\psi}^{n} \nonumber \\
& \leq C(\|\varphi_{t,\epsilon} \|_{L^{\infty}(X)} )   \int_{X}  \mathscr{C}_{\kappa} |s_{E}|_{h_{E}}^{ 2a_{0}K-a_{0} ( \sigma_{E}-\lambda  )      }     w^{2p} |\nabla h| 
 \omega_{\psi}^{n}  \nonumber \\
 & \leq C(\|\varphi_{t,\epsilon} \|_{L^{\infty}(X)} ) \mathscr{C}_{\kappa} \int_{X} w^{2p}( |\nabla h|^{2}+1   ) \omega_{\psi}^{n}  \nonumber \\
 & \leq  C_{5.12} \int_{X} w^{2p} \omega_{\psi}^{n}+		C_{5.12} \mathcal{Q}_{4},  \label{LE14}
\end{align}	
where we choose $K$ such that $2K \geq \sigma_{E}-\lambda$ and $C_{5.12}$ depends on $\mathscr{C}_{\kappa}, \|\varphi_{t,\epsilon} \|_{L^{\infty}(X)}$. 

By (\ref{estimate of S eps}) and (\ref{singular gradient estimate intro}), 
\begin{align*}
|\nabla P|& = |\nabla \big(h+\kappa\log S_{\epsilon}+ \log \frac{ \omega_{X}^{n} }{ \omega^{n}  }   -\lambda \psi-\lambda a_{0} \log |s_{E}|_{h_{E}}^{2}     \big)| \\
& \leq |\nabla h|+\kappa (C_{s})^{ \frac{1}{2} } S_{\epsilon}^{-\frac{1}{2}}+|\nabla (\frac{ \omega_{X}^{n} }{ \omega^{n}  }) |+|\lambda| \mathscr{C}_{\kappa} |s_{E}|_{h_{E}}^{ -a_{0} ( \sigma_{E}-\lambda  )      }+|\lambda| a_{0} |s_{E}|_{h_{E}}^{-2} |\nabla( |s_{E}|_{h_{E}}^{2}     ) |\\
& \leq C_{5.13} \big( |\nabla h|+S_{\epsilon}^{ -\frac{1}{2} }+1+|s_{E}|_{h_{E}}^{-a_{0}(\sigma_{E}-\lambda)}+|s_{E}|_{h_{E}}^{-2} \big),
\end{align*}
where $C_{5.13}$ depends on $\kappa, C_{s} , \sup_{X} |\nabla (\frac{ \omega_{X}^{n} }{ \omega^{n}  }) |, \kappa$ and $\sup_{X} |\nabla( |s_{E}|_{h_{E}}^{2}     ) |$. 
Therefore, we obtain
\begin{align}
|\mathcal{Q}_{3}| & \leq  \int_{X} e^{-K \psi} w^{2p} |\nabla h| \cdot |\nabla P |  \omega_{\psi}^{n} \nonumber \\
&\leq C_{5.13} \int_{X} e^{-K \psi} w^{2p} |\nabla h|  \cdot \big( |\nabla h|+S_{\epsilon}^{ -\frac{1}{2} }+1+|s_{E}|_{h_{E}}^{-a_{0}(\sigma_{E}-\lambda)}+|s_{E}|_{h_{E}}^{-2} \big)  \omega_{\psi}^{n} \nonumber  \\
& \leq C_{5.14} \int_{X} w^{2p} \omega_{\psi}^{n}+C_{5.14} \mathcal{Q}_{4}+C_{5.14} \mathcal{Q}_{5} ,
 \label{LE15}
\end{align}	
where we choose $K$ such that $2a_{0} K \geq \max \{ a_{0}(\sigma_{E}-\lambda), 2   \}$, $C_{5.14}$ depends on $C_{5.13}, \| \varphi_{t,\epsilon} \|_{L^{\infty}(X)}$, and 
\begin{align*}
\mathcal{Q}_{5} :=\int_{X} w^{2p} |\nabla h|   S_{\epsilon}^{-\frac{1}{2}} \omega_{\psi}^{n}. 
\end{align*}
 Taking $\tau=\frac{1}{4}$ in (\ref{LE13}) and combining (\ref{LE12}), (\ref{LE13}), (\ref{LE14}) with (\ref{LE15}), we obtain
 \begin{align}
  \int_{X} \frac{2p}{ ( p+\frac{1}{2} )^{2} } |\nabla w^{p+\frac{1}{2}}  |_{\psi}^{2} \omega_{\psi}^{n} \leq C_{5.15} p \big( \int_{X} ( w^{2p+1}+w^{2p}  ) \omega_{\psi}^{n}+ \mathcal{Q}_{4} +\mathcal{Q}_{5}  \big), \label{LE16}
 \end{align}
where $C_{5.15}$ depends on $C_{5.1}, C_{5.11}, C_{5.12}, C_{5.14}$. 

\subsubsection{Estimate of $\mathcal{Q}_{4}$}

\begin{prop}
		\label{prop estimate of Q4}
	We assume that
	$e^{ \frac{h}{b} } \in W^{1,b}(\omega_{X}) $  for some $b >2n$.
	Then there exists a  positive constant $\tilde{C}_{h}$ depending on $\|\varphi_{t,\epsilon}\|_{L^{\infty}(X)} ,
	c_{t,\epsilon}, b, \| e^{\frac{h}{b} } \|_{ W^{1,b}( \omega_{X} )  }$ such that for any $\alpha \in (\frac{b}{b-2}, \frac{n}{n-1})$, 
	\begin{align*}
	\mathcal{Q}_{4} \leq \tilde{C}_{h} \big( \int_{X} w^{ (2p+1) \alpha   } \omega_{\psi}^{n} \big)^{\frac{2p}{ (2p+1) \alpha}    }.
	\end{align*}
\end{prop}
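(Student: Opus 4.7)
The plan is to mirror the proof of Proposition \ref{prop condition on h gradient singular}, adjusting the exponent bookkeeping for the fact that the integrand of $\mathcal{Q}_4$ contains $|\nabla h|^{2}$ rather than $|\nabla h|$. The hypothesis $b>2n$ and the restriction $\alpha>b/(b-2)$, which is exactly what forces the interval $(b/(b-2),\,n/(n-1))$ to be nonempty, are precisely what makes the two successive Hölder applications below compatible.

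First I apply Hölder's inequality with dual exponents $s=(2p+1)\alpha/(2p)$ and $s'=s/(s-1)=(2p+1)\alpha/((2p+1)\alpha-2p)$, chosen so that the $w$-factor carries exponent $(2p+1)\alpha$ and the outside exponent on that factor becomes exactly $2p/((2p+1)\alpha)$:
$$
\mathcal{Q}_4 \le \Big(\int_X w^{(2p+1)\alpha}\omega_{\psi}^{n}\Big)^{\frac{2p}{(2p+1)\alpha}} \Big(\int_X |\nabla h|^{2s'}\omega_{\psi}^{n}\Big)^{1/s'}.
$$
A direct computation (differentiating in $p$) shows that $s'$ is strictly increasing on $[1/2,\infty)$ with $\lim_{p\to\infty}s'=\alpha/(\alpha-1)$, and the hypothesis $\alpha>b/(b-2)$ is equivalent to $\alpha/(\alpha-1)<b/2$. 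Consequently $2s'<b$ uniformly for all $p\ge 1/2$, and the exponent $1/s'$ is bounded strictly between two positive constants independent of $p$.

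The remaining integral $\int_X |\nabla h|^{2s'}\omega_{\psi}^n$ is handled exactly as in Proposition \ref{prop condition on h gradient singular}. Using $\omega_{\psi}^n=S_\epsilon^{\kappa}e^{h-\lambda\varphi_{t,\epsilon}+c_{t,\epsilon}}\omega_X^n$ with $S_\epsilon\le 1$ and $\kappa\ge 0$, together with the pointwise comparison $|\nabla h|_\omega\le |\nabla h|_{\omega_X}(\tr_\omega\omega_X)^{1/2}$ and the uniform bound $\tr_\omega\omega_X\le C(\omega_X,\omega_K)$ (which follows from $\omega\ge \omega_K\ge c(\omega_X,\omega_K)\,\omega_X$), the problem reduces to controlling $\int_X |\nabla h|_{\omega_X}^{2s'}e^h\,\omega_X^n$. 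One more Hölder with exponents $b/(2s')$ and $b/(b-2s')$ splits this as
$$
\Big(\int_X |\nabla h|_{\omega_X}^{b}e^h\,\omega_X^n\Big)^{2s'/b}\Big(\int_X e^h\,\omega_X^n\Big)^{1-2s'/b},
$$
and the identity $|\nabla h|_{\omega_X}^b e^h = b^b |\nabla e^{h/b}|_{\omega_X}^b$ identifies the first factor with a power of $\|e^{h/b}\|_{W^{1,b}(\omega_X)}$. The second factor is finite because Remark \ref{rem condition on h in G2 implies sobolev} gives, for $b>2n$, even $e^h\in L^\infty(X)$.

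The only subtlety is keeping the resulting constant $\tilde C_h$ independent of $p\ge 1/2$, $t\in(0,1)$, and $\epsilon\in(0,1/4)$. The boundedness of $1/s'$ noted above guarantees that the relevant powers of $\|e^{h/b}\|_{W^{1,b}(\omega_X)}$ and $\|e^h\|_{L^1(\omega_X)}$ remain controlled, while the $(t,\epsilon)$-dependence is absorbed into $c_{t,\epsilon}$ and $\|\varphi_{t,\epsilon}\|_{L^{\infty}(X)}$ per Remark \ref{remark c t epsilon}. Assembling these bounds yields the claimed estimate for $\mathcal{Q}_4$.
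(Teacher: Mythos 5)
Your argument is correct and follows essentially the same route as the paper: the exponent $s'$ you introduce is exactly the paper's $r$, the two Hölder applications are identical in structure, and the condition $\alpha > b/(b-2)$ is used in the same way to ensure $2s' < b$ uniformly in $p$. The only differences are cosmetic (you verify the monotonicity of $s'$ explicitly and invoke Remark~\ref{rem condition on h in G2 implies sobolev} to bound $\int_X e^h\,\omega_X^n$, whereas the paper controls it directly via $\|e^{h/b}\|_{W^{1,b}}$).
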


The proof is almost the same as  Proposition \ref{prop condition on h gradient singular}.  
\begin{proof}
	For any $\alpha \in (\frac{b}{b-2}, \frac{n}{n-1})$, by the H\"older inequality, 
	\begin{align*}
	\mathcal{Q}_{4} = \int_{X} |\nabla h|^{2} w^{ 2p} \omega_{\psi}^{n}  \leq \big(   \int_{X} |\nabla h|^{2 r} \omega_{\psi}^{n} \big)^{ \frac{1}{r} } 
	\big( \int_{X} w^{ (2p+1) \alpha  } \omega_{\psi}^{n} \big)^{ \frac{2p}{ (2p+1) \alpha  }   }	
	\end{align*}
	where
	\begin{align*}
	2 r=\frac{ 2 (2p+1) \alpha}{  (2p+1) \alpha-2p   } < \frac{2 \alpha}{\alpha-1} <b, \quad \forall p \geq \frac{1}{2}. 
	\end{align*}
	
	Recalling that
	\begin{align*}
	\omega_{\psi}^{n} = S_{\epsilon}^{ \kappa} e^{ h-\lambda\vphi_{  t,\epsilon }   +c_{t,\epsilon} } \omega_X^{n}, 
	\end{align*}
	then  
	\begin{align*}
	\int_{X} |\nabla h|^{2r} \omega_{\psi}^{n} \leq C(c_{t,\epsilon},  \|\varphi_{t,\epsilon}\|_{L^{\infty}(X)}    ) \int_{X} |\nabla h|^{2r} e^{h}  \omega_{X}^{n}.
	\end{align*}
	
	Since $2r <b$, then by the H\"older inequality,
	\begin{align*}
	\int_{X} |\nabla h|^{2r} e^{h} \omega_{X}^{n}& \leq \int_{X} \big(  |\nabla h|_{ \omega_{X}  }^{2} tr_{\omega} \omega_{X} \big)^{r} e^{h} \omega_{X}^{n}
	\\
	& \leq C(\omega_{X},\omega_{K} ) \int_{X}|\nabla h|_{ \omega_{X}  }^{ 2r } e^{ \frac{2r}{b} h  } e^{  (1-\frac{2r}{b}) h    } \omega_{X}^{n}\\
	& \leq C(\omega_{X},\omega_{K} ) \big( \int_{X} |\nabla h|_{ \omega_{X}  }^{b} e^{h} \omega_{X}^{n} \big)^{ \frac{2r}{b}  } \big(\int_{X} e^{h} \omega_{X}^{n} \big)^{  1-\frac{2r}{b}   } \\
	&=C(\omega_{X},\omega_{K} ) b^{2r}\big( \int_{X} |\nabla e^{\frac{h}{b}}|_{\omega_{X}}^{b} \omega_{X}^{n} \big)^{ \frac{2r}{b}  } \big(\int_{X} e^{h} \omega_{X}^{n} \big)^{  1-\frac{2r}{b}   },
	\end{align*}
	which yields that
	\begin{align*}
	\mathcal{Q}_{4} \leq C(c_{t,\epsilon}, \|\varphi_{t,\epsilon}\|_{L^{\infty}(X)}  ,b, \| e^{\frac{h}{b} } \|_{ W^{1,b}( \omega_{X} )  } )
	\big( \int_{X} w^{ (2p+1) \alpha  } \omega_{\psi}^{n} \big)^{ \frac{2p}{ (2p+1) \alpha  }   }	.
	\end{align*}
\end{proof}

\subsubsection{Estimate of $\mathcal{Q}_{5}$}

\begin{prop}
	\label{prop estimate of Q5}
	(1). If $e^{ \frac{h}{b} } \in W^{1,b}(\omega_{X}) $  for some $b >2n$, then for any $\kappa$ satisfying 
	\begin{align*}
	\kappa>(\frac{1}{2}+\frac{n_{s}}{b}     )n -n_{s},
	\end{align*}
	 there exist $\alpha \in ( 1,\frac{n}{n-1} )$ depending on $\kappa,n_{s},b$ and 
	 a positive constant $\tilde{C}_{A}$ depending on $\kappa, n_{s}, b, \| e^{\frac{h}{b} } \|_{ W^{1,b}( \omega_{X}      )}, \|\varphi_{t,\epsilon}\|_{L^{\infty}(X)}, c_{t,\epsilon}  $ such that 
	\begin{align*}
	\mathcal{Q}_{5} \leq \tilde{C}_{A} \big( \int_{X}  w^{(2p+1) \alpha} \omega_{\psi}^{n} \big)^{ \frac{2p}{ (2p+1) \alpha  }  }. 
	\end{align*}
	(2). If $e^{ \frac{h}{\hat{b}}  } \in W^{1,\infty}(\omega_{X}) $ for some $\hat{b}>n$ , then for any $\kappa>\frac{n}{2}-n_{s}$, there 
	exist $\alpha \in ( 1,\frac{n}{n-1} )$ depending on $\kappa,n_{s}$ and $\hat{C}_{A}$ depending on $\kappa, n_{s},  \hat{b}, \| e^{\frac{h}{\hat{b}}}  \|_{ W^{1,\infty}( \omega_{X}      )}$, $\|\varphi_{t,\epsilon}\|_{L^{\infty}(X)}, c_{t,\epsilon}  $ such that 
	\begin{align*}
	\mathcal{Q}_{5} \leq \hat{C}_{A} \big( \int_{X} w^{(2p+1) \alpha} \omega_{\psi}^{n} \big)^{ \frac{2p}{ (2p+1) \alpha  }  }. 
	\end{align*}
\end{prop}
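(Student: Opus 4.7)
My plan is to apply the H\"older inequality twice. For any $\alpha \in (1, \tfrac{n}{n-1})$, I first separate the large $w^{2p}$ factor from the small weight $|\nabla h|\, S_\epsilon^{-1/2}$, obtaining
\[
\mathcal{Q}_5 \leq \Big(\int_X |\nabla h|^r S_\epsilon^{-r/2}\, \omega_\psi^n\Big)^{1/r}\Big(\int_X w^{(2p+1)\alpha}\, \omega_\psi^n\Big)^{\frac{2p}{(2p+1)\alpha}},
\]
where $r = r(p) = \frac{(2p+1)\alpha}{(2p+1)\alpha - 2p}$. A direct derivative check shows that $r$ is increasing in $p$ with $\sup_{p \geq 1/2} r(p) = \frac{\alpha}{\alpha-1}$, so by choosing $\alpha$ close enough to $\tfrac{n}{n-1}$ the exponent $r$ can be kept uniformly close to (but strictly above) $n$. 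Expanding $\omega_\psi^n = e^{c_{t,\epsilon}} S_\epsilon^\kappa e^{h-\lambda\varphi_{t,\epsilon}}\omega_X^n$ and using $\|\varphi_{t,\epsilon}\|_{L^\infty}$, the task reduces to uniformly bounding (in $\epsilon$) the quantity $\mathcal{Q}_6 := \int_X |\nabla h|^r S_\epsilon^{\kappa - r/2} e^h\, \omega_X^n$.

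For part (1), I would choose $\alpha \in (\tfrac{b}{b-1},\tfrac{n}{n-1})$ so that $r < \tfrac{\alpha}{\alpha-1} < b$, and apply a second H\"older with exponents $\tfrac{b}{r}$ and $\tfrac{b}{b-r}$, writing the integrand as $(|\nabla h|^r e^{rh/b}) \cdot (e^{(1-r/b)h} S_\epsilon^{\kappa-r/2})$. The first factor raises to $\int_X |\nabla h|^b e^h\omega_X^n = b^b\|e^{h/b}\|_{W^{1,b}(\omega_X)}^b$, exactly as in \lemref{prop condition on h gradient singular} and \lemref{prop estimate of Q4}. For the second factor I use that $b > 2n$ gives $e^{h/b}\in C^\gamma(X)$ via Sobolev embedding, hence $\|e^h\|_{L^\infty(X)} < +\infty$, reducing the problem to uniform finiteness of $\int_X S_\epsilon^{(\kappa-r/2) b/(b-r)}\omega_X^n$. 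By the integral condition \eqref{integral condition of f}, this holds iff $(\kappa-r/2)\tfrac{b}{b-r} > -n_s$, equivalently $\kappa > r\bigl(\tfrac{1}{2}+\tfrac{n_s}{b}\bigr) - n_s$. Since $r$ can be pushed arbitrarily close to $n$, the sharp hypothesis $\kappa > (\tfrac{1}{2}+\tfrac{n_s}{b})n - n_s$ yields an admissible $\alpha$.

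For part (2), the $W^{1,\infty}$ bound on $e^{h/\hat b}$ gives the pointwise estimate $|\nabla h| \leq \hat b\,\|e^{h/\hat b}\|_{W^{1,\infty}(\omega_X)} e^{-h/\hat b}$, while compactness plus the $W^{1,\infty}$ bound again yields $\|e^h\|_{L^\infty(X)}<+\infty$. Choosing $\alpha$ with $\tfrac{\alpha}{\alpha-1} < \hat b$ (possible exactly because $\hat b > n$) and hence $r < \hat b$, one gets
\[
\mathcal{Q}_6 \leq \hat b^r\|e^{h/\hat b}\|_{W^{1,\infty}}^r \int_X e^{(1-r/\hat b)h} S_\epsilon^{\kappa - r/2}\omega_X^n \leq C\int_X S_\epsilon^{\kappa-r/2}\omega_X^n,
\]
which is uniformly finite in $\epsilon$ iff $\kappa - r/2 > -n_s$. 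Letting $r$ approach $n$ gives the threshold $\kappa > \tfrac{n}{2}-n_s$.

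The main subtlety I expect to navigate is the bookkeeping of H\"older exponents, namely that a single choice of $\alpha$ must simultaneously validate the second H\"older (requiring $\tfrac{\alpha}{\alpha-1} < b$ or $< \hat b$) and the integrability threshold on $\kappa$, uniformly for all $p \geq 1/2$. This reduces to the monotonicity $r(p)\nearrow \tfrac{\alpha}{\alpha-1}$ together with the strict hypothesis on $\kappa$, which provides the small margin needed to absorb the supremum. Everything else mirrors \lemref{prop sharp condition on kappa} and \lemref{prop estimate of Q4}, with the extra $S_\epsilon^{-1/2}$ weight (coming from $|\nabla \log S_\epsilon| \lesssim S_\epsilon^{-1/2}$) shifting the lower bound on $\kappa$ by precisely $\tfrac{r}{2}$, which is the source of the sharp angle constraint.
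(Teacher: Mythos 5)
Your argument is correct and follows essentially the same path as the paper: a first H\"older splitting $w^{2p}$ from $|\nabla h|\,S_\epsilon^{-1/2}$ with dual exponent $r=\tfrac{(2p+1)\alpha}{(2p+1)\alpha-2p}$, expansion of $\omega_\psi^n$, a second H\"older pairing $|\nabla h|$ against part of $e^h$ to produce the $W^{1,b}$ (resp.\ $W^{1,\infty}$) norm, the $L^\infty$ bound on $e^h$ to peel it off, and the integral condition \eqref{integral condition of f} to quantify the $\kappa$--threshold, with $r$ driven toward $n$ by pushing $\alpha\to\tfrac{n}{n-1}$. The only divergences from the paper are cosmetic: you put $e^{rh/b}$ into the gradient factor so that it raises exactly to $b^b\int|\nabla e^{h/b}|^b$, whereas the paper puts all of $e^h$ there and bounds the excess $e^{(b/r-1)h}$ by $\|e^h\|_{C^0}$; and your phrase that $r$ is ``uniformly close to (but strictly above) $n$'' is slightly misleading --- $r$ itself can lie well below $n$ for small $p$, what matters (and what you correctly use) is that $\sup_p r=\tfrac{\alpha}{\alpha-1}$ can be brought just above $n$. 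Neither point affects validity.
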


\begin{proof}
	\textbf{Case 1}: We assume $e^{ \frac{h}{b} } \in W^{1,b}(\omega_{X}) $  for some $b >2n$ and choose $\kappa$ satisfying 
	$\kappa+n_{s}>(\frac{1}{2}+\frac{n_{s}}{b}     )n$, then there exists $\epsilon>0$ 
	depending on $\kappa, b,n_{s}$ such that
	\begin{align*}
	\kappa+n_{s}>(\frac{1}{2}+\frac{n_{s}}{b}     )(n+\epsilon)+\epsilon. 
	\end{align*}
	By choosing $\alpha  \in (  \frac{n+\epsilon}{ n+\epsilon-1   } ,\frac{n}{n-1}   )$, 
	and applying  the H\"older inequality,
	\begin{align}
	 \label{LE17}
	\mathcal{Q}_{5}= \int_{X} w^{2p} |\nabla h|   S_{\epsilon}^{-\frac{1}{2}} \omega_{\psi}^{n} \leq \big( \int_{X}|\nabla h|^{r} S_{\epsilon}^{ -\frac{r}{2}  } \omega_{\psi}^{n} \big)^{\frac{1}{r} } \big( \int_{X}w^{(2p+1) \alpha} \omega_{\psi}^{n} \big)^{ \frac{2p}{ (2p+1) \alpha  }  },
	\end{align}
	where
	\begin{align}
	\label{LE18}
\frac{2 \alpha}{ 2 \alpha-1 }  \leq r:=\frac{  (2p+1) \alpha}{  (2p+1) \alpha-2p   } <\frac{\alpha}{\alpha-1} <n+\epsilon, \quad \forall p \geq \frac{1}{2}. 
	\end{align}
	Recalling that
	\begin{align}
	\label{LE19}
	\omega_{\psi}^{n} = S_{\epsilon}^{ \kappa} e^{ h-\lambda\vphi_{  t,\epsilon }   +c_{t,\epsilon} } \omega_X^{n}, 
	\end{align}
	it yields that
	\begin{align*}
\int_{X}|\nabla h|^{r} S_{\epsilon}^{ -\frac{r}{2}  } \omega_{\psi}^{n} & \leq C(  \|\varphi_{t,\epsilon}\|_{L^{\infty}(X)}, c_{t,\epsilon}    ) \int_{X} |\nabla h |^{r} e^{h} S_{\epsilon}^{ \kappa-\frac{r}{2} } \omega_{X}^{n} \\
& \leq C(  \|\varphi_{t,\epsilon}\|_{L^{\infty}(X)}, c_{t,\epsilon}    ) \big( \int_{X} |\nabla h|^{b} e^{ \frac{b}{r} h   } \omega_{X}^{n} \big)^{ \frac{r}{b}  } \big( \int_{X } S_{\epsilon}^{  \frac{b}{b-r} (\kappa-\frac{r}{2}) }  
\omega_{X}^{n} \big)^{1-\frac{r}{b}  } \\
& \leq C(  \|\varphi_{t,\epsilon}\|_{L^{\infty}(X)}, c_{t,\epsilon}  , b,\| e^{\frac{h}{b} } \|_{ W^{1,b}( \omega_{X}      )} ) \big( \int_{X } S_{\epsilon}^{  \frac{b}{b-r} (\kappa-\frac{r}{2}) }  
\omega_{X}^{n} \big)^{1-\frac{r}{b}  },
	\end{align*}
	where we use the facts that $r<n+\epsilon<2n<b$ and 
	$\|e^{h}\|_{C^{0}(X)}$ can be bounded by $\| e^{\frac{h}{b} } \|_{ W^{1,b}( \omega_{X}      )}$ by the Sobolev embedding theorem. 
	Observing that
	\begin{align*}
	\frac{b}{b-r}( \kappa-\frac{r}{2})+n_{s}&=\frac{b}{b-r}( \kappa-\frac{r}{2}+\frac{b-r}{b}n_{s} ) \\
	&=\frac{b}{b-r} \big( \kappa+n_{s}-(\frac{1}{2}+\frac{n_{s}}{b})r \big) \\
	& \geq \frac{b}{b-r} \big(   \kappa+n_{s} -(\frac{1}{2}+\frac{n_{s}}{b})(n+\epsilon)          \big) \\
	& \geq \frac{b}{b-1} \epsilon,
	\end{align*}
hence we obtain that
\begin{align*}
\mathcal{Q}_{5} \leq \tilde{C}_{A} \big( \int_{X}u^{(2p+1) \alpha} \omega_{\psi}^{n} \big)^{ \frac{2p}{ (2p+1) \alpha  }  },
\end{align*}
		where $\tilde{C}_{A}$ depends on $\kappa, n_{s}, b,  \|\varphi_{t,\epsilon}\|_{L^{\infty}(X)}, c_{t,\epsilon}  , \| e^{\frac{h}{b} } \|_{ W^{1,b}( \omega_{X}      )}$.

	\textbf{Case 2:} We assume if $e^{ \frac{h}{\hat{b}}  } \in W^{1,\infty}(\omega_{X}) $ for some $\hat{b}>n$ and choose $\kappa>\frac{n}{2}-n_{s}$, then there exists $\epsilon>0$ depending on $\kappa, n_{s}$ such that
	\begin{align*}
\hat{b}>n+\epsilon, \quad 	\kappa > \frac{n}{2}-n_{s}+\epsilon. 
	\end{align*}
	By choosing $\alpha  \in (  \frac{n+\epsilon}{ n+\epsilon-1   } ,\frac{n}{n-1}   )$, 
	and applying  the H\"older inequality, (\ref{LE17}) and (\ref{LE18}) still hold. 
Using (\ref{LE19}) again, we obtain
	\begin{align*}
	\int_{X}|\nabla h|^{r} S_{\epsilon}^{ -\frac{r}{2}  } \omega_{\psi}^{n} & \leq C(  \|\varphi_{t,\epsilon}\|_{ L^{\infty}(X) },c_{t,\epsilon} ) \int_{X} |\nabla h|^{r} S_{\epsilon}^{ \kappa-\frac{r}{2} } e^{h} \omega_{X}^{n} \\
	& =C( \|\varphi_{t,\epsilon}\|_{ L^{\infty}(X) } , c_{t,\epsilon}   ) \int_{X} |\nabla h|^{r} e^{ \frac{r}{ \hat{b} } h  }  S_{\epsilon}^{ \kappa-\frac{r}{2} } e^{(1-\frac{r}{ \hat{b} } ) h} \omega_{X}^{n} \\
	& \leq C(  \|\varphi_{t,\epsilon}\|_{ L^{\infty}(X) }, c_{t,\epsilon}, \sup_{X} e^{h}  ) \int_{X} |\hat{b} \nabla (e^{ \frac{h}{ \hat{b} }  } )|^{r} S_{\epsilon}^{ \kappa-\frac{r}{2} } \omega_{X}^{n} \\
	& \leq C(  \|\varphi_{t,\epsilon}\|_{ L^{\infty}(X) }, c_{t,\epsilon},\|e^{\frac{h}{ \hat{b} } }\|_{W^{1,\infty}(X)}  ) \int_{X}  S_{\epsilon}^{ \kappa-\frac{r}{2} } \omega_{X}^{n} ,
	\end{align*}
	where we use the fact that $r<n+\epsilon<\hat{b}$. 
	Since 
	\begin{align*}
	-\frac{r}{2}+\kappa+n_{s} > -\frac{1}{2} (n+\epsilon)+\frac{n}{2}+\epsilon = \frac{\epsilon}{2},
	\end{align*}
	we obtain that
\begin{align*}
\mathcal{Q}_{5} \leq \hat{C}_{A} \big( \int_{X}u^{(2p+1) \alpha} \omega_{\psi}^{n} \big)^{ \frac{2p}{ (2p+1) \alpha  }  },
\end{align*}
where $\hat{C}_{A}$ depends on $\kappa, n_{s}, \hat{b}, \|\varphi_{t,\epsilon}\|_{L^{\infty}(X)}, c_{t,\epsilon}  , \| e^{\frac{h}{\hat{b}} } \|_{ W^{1,\infty}( \omega_{X}      )}$. 
\end{proof}

\begin{rem}
	\label{remark Q5}
	If $e^{ \frac{h}{\hat{b}}  } \in W^{1,\infty}(\omega_{X}) $ for some $\hat{b}>n$ , we can not conclude that there exists $b>2n$ such that $e^{ \frac{h}{b}  } \in W^{1,b}(\omega_{X})$ since we do not assume $e^{h}$ has a priori positive lower bound. 
	\end{rem}

Theorem \ref{thm 2nd Laplacian estimate intro}  follows from (\ref{LE16}), Proposition \ref{prop estimate of Q4}, Proposition \ref{prop estimate of Q5}, Lemma \ref{lemma Sobolev inequality uniform} and Moser's iteration.

	\bibliographystyle{plain}
	\bibliography{MAsubmit}

\end{document}